\begin{document}

\newcommand\bbb{\ensuremath{\mathbb{B}}}
\newcommand\bbc{\ensuremath{\mathbb{C}}}
\newcommand\bbd{\ensuremath{\mathbb{D}}}
\newcommand\bbf{\ensuremath{\mathbb{F}}}
\newcommand\bbk{\ensuremath{\mathbb{K}}}
\newcommand\bbn{\ensuremath{\mathbb{N}}}
\newcommand\bbp{\ensuremath{\mathbb{P}}}
\newcommand\bbq{\ensuremath{\mathbb{Q}}}
\newcommand\bbr{\ensuremath{\mathbb{R}}}
\newcommand\bbt{\ensuremath{\mathbb{T}}}
\newcommand\bbz{\ensuremath{\mathbb{Z}}}

\newcommand\bbbs{\ensuremath{\mathbb{B}\text{ }}}
\newcommand\bbcs{\ensuremath{\mathbb{C}\text{ }}}
\newcommand\bbds{\ensuremath{\mathbb{D}\text{ }}}
\newcommand\bbfs{\ensuremath{\mathbb{F}\text{ }}}
\newcommand\bbks{\ensuremath{\mathbb{K}\text{ }}}
\newcommand\bbns{\ensuremath{\mathbb{N}\text{ }}}
\newcommand\bbps{\ensuremath{\mathbb{P}\text{ }}}
\newcommand\bbqs{\ensuremath{\mathbb{Q}\text{ }}}
\newcommand\bbrs{\ensuremath{\mathbb{R}\text{ }}}
\newcommand\bbts{\ensuremath{\mathbb{T}\text{ }}}
\newcommand\bbzs{\ensuremath{\mathbb{Z}\text{ }}}

\newcommand\mca{\ensuremath{\mathcal{A}}}
\newcommand\mcb{\ensuremath{\mathcal{B}}}
\newcommand\mcc{\ensuremath{\mathcal{C}}}
\newcommand\mcd{\ensuremath{\mathcal{D}}}
\newcommand\mce{\ensuremath{\mathcal{E}}}
\newcommand\mcf{\ensuremath{\mathcal{F}}}
\newcommand\mch{\ensuremath{\mathcal{H}}}
\newcommand\mck{\ensuremath{\mathcal{K}}}
\newcommand\mcl{\ensuremath{\mathcal{L}}}
\newcommand\mcm{\ensuremath{\mathcal{M}}}
\newcommand\mcn{\ensuremath{\mathcal{N}}}
\newcommand\mcp{\ensuremath{\mathcal{P}}}
\newcommand\mcr{\ensuremath{\mathcal{R}}}
\newcommand\mcs{\ensuremath{\mathcal{S}}}
\newcommand\mct{\ensuremath{\mathcal{T}}}
\newcommand\mcu{\ensuremath{\mathcal{U}}}
\newcommand\mcw{\ensuremath{\mathcal{W}}}
\newcommand\mcx{\ensuremath{\mathcal{X}}}
\newcommand\mcy{\ensuremath{\mathcal{Y}}}
\newcommand\mcas{\ensuremath{\mathcal{A}\text{ }}}
\newcommand\mcbs{\ensuremath{\mathcal{B}\text{ }}}
\newcommand\mccs{\ensuremath{\mathcal{C}\text{ }}}
\newcommand\mcds{\ensuremath{\mathcal{D}\text{ }}}
\newcommand\mces{\ensuremath{\mathcal{E}\text{ }}}
\newcommand\mcfs{\ensuremath{\mathcal{F}\text{ }}}
\newcommand\mchs{\ensuremath{\mathcal{H}\text{ }}}
\newcommand\mcks{\ensuremath{\mathcal{K}\text{ }}}
\newcommand\mcls{\ensuremath{\mathcal{L}\text{ }}}
\newcommand\mcms{\ensuremath{\mathcal{M}\text{ }}}
\newcommand\mcns{\ensuremath{\mathcal{N}\text{ }}}
\newcommand\mcps{\ensuremath{\mathcal{P}\text{ }}}
\newcommand\mcrs{\ensuremath{\mathcal{R}\text{ }}}
\newcommand\mcss{\ensuremath{\mathcal{S}\text{ }}}
\newcommand\mcts{\ensuremath{\mathcal{T}\text{ }}}
\newcommand\mcus{\ensuremath{\mathcal{U}\text{ }}}
\newcommand\mcws{\ensuremath{\mathcal{W}\text{ }}}
\newcommand\mcxs{\ensuremath{\mathcal{X}\text{ }}}
\newcommand\mcys{\ensuremath{\mathcal{Y}\text{ }}}

\newcommand\mfa{\ensuremath{\mathfrak{A}}}
\newcommand\mfb{\ensuremath{\mathfrak{B}}}
\newcommand\mfc{\ensuremath{\mathfrak{C}}}
\newcommand\mfg{\ensuremath{\mathfrak{g}}}
\newcommand\mfi{\ensuremath{\mathfrak{I}}}
\newcommand\mfm{\ensuremath{\mathfrak{M}}}
\newcommand\mfs{\ensuremath{\mathfrak{S}}}
\newcommand\mfx{\ensuremath{\mathfrak{X}}}
\newcommand\mfy{\ensuremath{\mathfrak{Y}}}
\newcommand\mfz{\ensuremath{\mathfrak{Z}}}
\newcommand\mfas{\ensuremath{\mathfrak{A}\text{ }}}
\newcommand\mfbs{\ensuremath{\mathfrak{B}\text{ }}}
\newcommand\mfcs{\ensuremath{\mathfrak{C}\text{ }}}
\newcommand\mfgs{\ensuremath{\mathfrak{g}\text{ }}}
\newcommand\mfis{\ensuremath{\mathfrak{I}\text{ }}}
\newcommand\mfms{\ensuremath{\mathfrak{M}\text{ }}}
\newcommand\mfss{\ensuremath{\mathfrak{S}\text{ }}}
\newcommand\mfxs{\ensuremath{\mathfrak{X}\text{ }}}
\newcommand\mfys{\ensuremath{\mathfrak{Y}\text{ }}}
\newcommand\mfzs{\ensuremath{\mathfrak{Z}\text{ }}}

\newcommand\bnd{\ensuremath{\mathcal{B(H)}}}
\newcommand\kpt{\ensuremath{\mathcal{K(H)}}}
\newcommand\fnt{\ensuremath{\mathcal{F(H)}}}
\newcommand\fnts{\ensuremath{\mathcal{F(H)}\text{ }}}
\newcommand\kpts{\ensuremath{\mathcal{K(H)}\text{ }}}
\newcommand\bnds{\ensuremath{\mathcal{B(H)}\text{ }}}

\newcommand\bndx{\ensuremath{\mathcal{B}\mathfrak{(X)}}}
\newcommand\kptx{\ensuremath{\mathcal{K}\mathfrak{(X)}}}
\newcommand\fntx{\ensuremath{\mathcal{F}\mathfrak{(X)}}}
\newcommand\fntxs{\ensuremath{\mathcal{F}\mathfrak{(X)}\text{ }}}
\newcommand\kptxs{\ensuremath{\mathcal{K}\mathfrak{(X)}\text{ }}}
\newcommand\bndxs{\ensuremath{\mathcal{B}\mathfrak{(X)}\text{ }}}

\newcommand\bndxy{\ensuremath{\mathcal{B}\mathfrak{(X,Y)}}}
\newcommand\kptxy{\ensuremath{\mathcal{K}\mathfrak{(X,Y)}}}
\newcommand\fntxy{\ensuremath{\mathcal{F}\mathfrak{(X,Y)}}}
\newcommand\fntxys{\ensuremath{\mathcal{F}\mathfrak{(X,Y)}\text{ }}}
\newcommand\kptxys{\ensuremath{\mathcal{K}\mathfrak{(X,Y)}\text{ }}}
\newcommand\bndxys{\ensuremath{\mathcal{B}\mathfrak{(X,Y)}\text{ }}}

\newcommand\Ran{\text{Ran}}

\newcommand\spr{\text{spr}}
\newcommand\co{\text{co}}

\newcommand\floor[1]{\left\lfloor #1\right\rfloor}
\newcommand\ceil[1]{\left\lceil #1\right\rceil}

\newcommand\ddt[2]{\ensuremath{\frac{d #1}{d #2}}}
\newcommand\pdt[2]{\ensuremath{\frac{\partial #1}{\partial #2}}}

\newcommand\ip[2]{\ensuremath{\left\langle #1,#2\right\rangle}}

\newcommand\half{\ensuremath{\frac{1}{2}}}
\newcommand\spn{\ensuremath{\text{span}}}
\newcommand\fdim{\ensuremath{\text{fdim}}}
\newcommand\fdimC{\ensuremath{\text{rdim}}}
\newcommand\rdim{\ensuremath{\text{rdim}}}
\newcommand{\tnorm}[1]{%
  \left\vert\kern-0.9pt\left\vert\kern-0.9pt\left\vert #1
    \right\vert\kern-0.9pt\right\vert\kern-0.9pt\right\vert}

\newtheorem{theorem}{Theorem}[section]
\newtheorem{lemma}[theorem]{Lemma}
\newtheorem{cor}[theorem]{Corollary}
\newtheorem{propn}[theorem]{Proposition}
\newtheorem{notn}[theorem]{Notation}

\theoremstyle{definition}
\newtheorem{eg}[theorem]{Example}
\newtheorem{defn}[theorem]{Definition}

\title[free products of von Neumann algebras]{The amalgamated free product of semifinite hyperfinite von Neumann algebras over atomic type I subalgebras}

\author[Redelmeier]{Daniel Redelmeier}

\address{Department of Mathematics, Texas A\&M University,
College Station, TX 77843-3368, USA}
\email{delredel@alumni.uwaterloo.ca}
\thanks{Research supported in part by NSF grant DMS--0901220}
\subjclass[2000]{46L54}
\keywords{amalgamated free product, hyperfinite von Neumann algebras}
\maketitle

\bibliographystyle{hplain}
\begin{abstract}  In this paper we describe the amalgamated free product of finite and semifinite hyperfinite von Neumann algebras over atomic type I subalgebras.  To do this we extend the notions of free dimension and standard embeddings used in the related results for finite von Neumann algebras to ones which work better for the semifinite case.  We also define classes $\mcr_{3}$ (of finite von Neumann algebras) and $\mcr_{4}$ (of semifinite von Neumann algebras) which are closed under such amalgamated free products.
\end{abstract}
\section{Introduction}

Voiculescu's amalgamated free product for C$^{*}$ and von Neumann algebras in \cite{VoicOrig} and \cite{voicAFP} have proved to be very useful constructions.  Early uses include Popa's use of the the amalgamated free product of von Neumann algebras in \cite{popaAFP} to construct subfactors with arbitrary allowable index.

The standard free product of finite hyperfinite von Neumann algebras was described by Dykema in \cite{kenDuke}.  He also described the amalgamated free product of multimatrix algebras in \cite{kenAmJM}.  In \cite{minearxiv} we described the amalgamated free product of finite hyperfinite von Neumann algebras over finite dimensional subalgebras.  Here we will extend this to semifinite hyperfinite von Neumann algebras over atomic type I subalgebras, for which the induced trace is also semifinite.

In \cite{kenLMS}, Dykema also showed that a certain class of von Neumann algebras (referred to as $\mcr$ in that paper and $\mcr_{1}$ here) was closed under amalgamated free products over finite dimensional subalgebrass.  This was used by Kodiyalam and Sunder in \cite{kodsund} and in the related paper by Guionnet,  Jones, and Shlyakhtenko in \cite{GJS}.  This class was extended to a more natural class  (which we refer to as $\mcr_{2}$) in \cite{minearxiv}.  Here we extend these to classes $\mcr_{3}$ of finite von Neumann algebras and $\mcr_{4}$ of semifinite von Neumann algebras which are each closed under amalgamated free products over type I atomic subalgebras.

\section{Basic Theorems and Definitions}

\subsection{Preliminaries}

Throughout this paper we will use finite and semifinite von Neumann algebras, with specified normal faithful semifinite trace.  In most cases we will also have a specified atomic type I subalgebra $D$.  We will in general assume that there is a trace preserving conditional expectation from our main algebras onto $D$ and that the trace will still be semifinite (or finite) on $D$.  If we refer to an expectation without specifying which one, this is  the one we mean.  Unless specified otherwise we will also be assuming homomorphisms are trace preserving.

With these von Neumann algebras in mind, we will use the amalgamated free product for von Neumann algebras.  Note it is clear from the definition of the trace on $A*_{D}B$, $\tau_{D}\circ E_{D}$, that if the trace on $A$, $B$ and $D$ is semifinite then it will be on $A*_{D}B$ (for the semifinite case use \cite{ueda}.

We will often use the following notation $\overset{p_{1}}{A_{1}}\oplus\overset{p_{2}}{A_{2}}\oplus\dots.$,  where the $p_{i}$ denote the central support of the $A_{i}$ (i.e. the identity in $A_{i}$ as part of the larger algebra, also sometimes referred to as the matrix unit).  In the case of atomic type I factors, we may also use the notation $\underset{t}{A}$, where $t$ denotes the trace of a minimal projection in this factor (clearly this doesn't exist in diffuse algebras, and is not necessarily unique if it is not a factor).

\begin{defn}  We will be dealing primarily with four classes of von Neumann algebras, defined as follows:

\begin{itemize}

\item  Let $\mcr_{1}$ (originally $\mcr$ in \cite{kenLMS}) be the class of finite von Neumann algebras which are the finite direct sum of the following types of algebras:
\begin{enumerate}
\item Matrix Algebras
\item $M_{n}\otimes L^{\infty}([0,1])$
\item Hyperfinite II$_{1}$ factors
\item Interpolated Free Group Factors
\end{enumerate}

\item  Let $\mcr_{2}$ be the class of finite von Neumann algebras which are the direct sum of a finite hyperfinite von Neumann algebra and a finite number of interpolated free group factors (as defined in \cite{minearxiv}).

\item Let $\mcr_{3}$ be the class of finite von Neumann algebras which are the direct sum of a finite hyperfinite von Neumann algebra and a countable number of interpolated free group factors.

\item Let $\mcr_{4}$ be the class of semifinite von Neumann algebras which are countable direct sums of the following types of algebras:
\begin{enumerate}
\item Semifinite hyperfinite von Neumann algebras
\item Interpolated free group factors
\item $F\otimes B(\mch)$ where $F$ is an interpolated free group factor and $\mch$ is a separable Hilbert space.
\end{enumerate}

\end{itemize}
\end{defn}
Note each class strictly contains the previous.

On the class $\mcr_{3}$, Dykema defined the notion of \emph{Free Dimension} in \cite{kenDuke} in the following way.

\begin{defn}  Let $A$ be a finite von Neumann algebra with specified normal faithful tracial state which is of the format 
\[
A=H\oplus\bigoplus_{i\in I}\overset{p_{i}}{L(F_{r_{i}})}\oplus\bigoplus_{j\in J}\underset{t_{j}}{M_{n_{j}}},
\]
where $H$ is a diffuse hyperfinite algebra, the $L(F_{r_{i}})$ are interpolated free group factors and the $M_{n_{j}}$ are matrix algebras. The \emph{free dimension} of $A$ (denote $\fdim(A)$) is equal to
\[
1+\left(\sum_{i\in I}\tau(p_{i})^{2}(r_{i}-1)\right)-\sum_{j\in J}t_{j}^{2}.
\]
\end{defn}

This may not be well defined if the interpolated free group factors turn out to be all isomorphic.  The definition can be made rigourous by defining it on a generating set, and this is what we mean when we use it.  This is not as big a problem as it seems, as in most cases we use the free dimension to determine which interpolated free groups we have, and this would then not matter.



The \emph{standard embedding} was introduced by Dykema in \cite{kenDuke}.
\begin{defn}  Let $A=L(F_{r})$ and $B=L(F_{r'})$ for $r<r'$ and let $\phi:A\to B$ be a unital embedding.  We call $\phi$ a \emph{standard embedding} if we can find a semicircular system $\omega=\{X_{t}\}_{t\in T}$ and a copy of $R$, the hyperfinite II$_{1}$ factor free from $\omega$, and $p_{t}\in R$ so that $B$ is generated by  $R$ and $\{p_{t}X_{t}p_{t}\}_{t\in T}$, and such that if $T'=\{t\in T| p_{t}X_{t}p_{t}\in \phi(A)\}$ then $\phi(A)=vN(R\cup\{p_{t}X_{t}p_{t}\}_{t\in T'})$.
\end{defn}

Standard embeddings have the following properties, proved by Dykema in \cite{kenDuke}:

\begin{enumerate}
\item For $A=L(F_{s})$ and $B=L(F_{s'})$, $s<s'$, then for $\phi:A\to B$ and projection $p\in A$, $\phi$ is standard if and only if $\phi|_{pAp}\to\phi(p)B\phi(p)$ is standard.
\item The inclusion $A\to A*B$ is standard if $A$ is an interpolated free group factor and $B$ is an interpolated free group factor, $L(\bbz)$, or a finite dimensional algebra other than $\bbc$.
\item The composition of standard embeddings is standard.
\item For $A_{n}=L(F_{s_{n}})$, with $s_{n}<s_{n'}$ if $n<n'$, and $\phi_{n}:A_{n}\to A_{n+1}$ a sequence of standard embeddings, then the inductive limit of the $A_{n}$ with the inclusions $\phi_{n}$ is $L_{F_{s}}$ where $s=\lim_{n\to\infty}s_{n}$.
\end{enumerate}

\subsection{Previous Results and Lemmas}

Dykema proved the following as Lemma 4.3 in \cite{kenAmJM}.
\begin{lemma}\label{L:removesummand} Let $A$ and $B$ be von Neumann algebras with subalgebra $D$, and let $\mcm=A*_{D}B$.  Let $p$ be a central projection in $A$, and let $\underline{A}=pD\oplus (1-p)A$ and $\underline{\mcm}=\underline{A}*_{D}B$.  Then the central support of $p$ is the same in $\mcm$ and $\underline{\mcm}$. Furthermore $p\mcm p=vN(p\underline{\mcm}p\cup pA)=p\underline{\mcm}p*_{pD}pA$.
\end{lemma}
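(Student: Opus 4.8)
The plan is to analyze the structure of $\underline{\mcm}=\underline{A}*_D B$ and compare it to $\mcm = A *_D B$. The central idea is that $\underline{A}$ differs from $A$ only by replacing the corner $pAp$ with the smaller algebra $pD$, while leaving $(1-p)A$ untouched. Since the amalgamated free product is built from reduced words alternating between $A\ominus D$ and $B\ominus D$, and since the only difference between $A$ and $\underline A$ lives inside the $p$-corner, I expect that $p\underline{\mcm}p$ captures exactly the part of $\mcm$ that does not ``see'' the extra elements of $pAp$. The goal identity $p\mcm p = p\underline{\mcm}p *_{pD} pA$ should then express that we recover all of $p\mcm p$ by freely adjoining back the missing corner $pA = pAp$ over the amalgam $pD$.

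\medskip

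First I would verify the claim about central supports. The central support of $p$ in $\mcm$ is determined by how $p$ fails to be central, which is governed by the reduced words in $\mcm$ that do not commute with $p$; replacing $pAp$ by $pD$ removes certain letters from $A$ but does not change which words can move $p$ around, since those words must begin and end with letters from $B\ominus D$ or with the untouched summand $(1-p)A$. Hence the central support is computed identically in both algebras, and I would make this precise by exhibiting the support as the projection onto the closed span of $\mcm p \mcm$ (respectively $\underline{\mcm}p\underline{\mcm}$) and checking these span the same reduced words. Next, for the main equality, I would set up a candidate isomorphism. Observe that $p\underline{\mcm}p$ naturally sits inside $p\mcm p$ (via the inclusion $\underline A \hookrightarrow $ something compatible, using that $pD \subseteq pAp$), and that $pA = pAp$ sits inside $p\mcm p$ as well. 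Both contain the common subalgebra $pD$, so there is a canonical $*$-homomorphism $p\underline{\mcm}p *_{pD} pA \to p\mcm p$ induced by these two inclusions; I would then argue it is a trace-preserving isomorphism.

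\medskip

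The core of the argument is therefore a freeness-with-amalgamation check: I must show that $p\underline{\mcm}p$ and $pAp$ are free over $pD$ inside $p\mcm p$, and that together they generate $p\mcm p$. Generation should follow because any reduced word in $\mcm$ compressed by $p$ can be rewritten by inserting $p$'s and using $p = $ the sum over the $pD$-structure, splitting each $A$-letter that meets the $p$-corner into its $pAp$ part (coming from $pA$) and its complementary parts (absorbed into $p\underline{\mcm}p$). For freeness, I would compute the expectation $E_{pD}$ of an alternating product of centered elements drawn from $p\underline{\mcm}p\ominus pD$ and $pAp\ominus pD$, and show it vanishes by expanding each $p\underline{\mcm}p$-factor back into reduced $\mcm$-words and invoking the freeness of $A$ and $B$ over $D$ in $\mcm$ together with the defining moment conditions of the free product.

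\medskip

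The main obstacle will be the bookkeeping in this last step: keeping track of how reduced words in $\mcm$ decompose under compression by $p$ and re-expressing them consistently as reduced words alternating between the two corner algebras over $pD$. The delicate point is that a single $A$-letter in a reduced $\mcm$-word can straddle the $p$ and $1-p$ parts, so one must carefully separate the genuinely new content (the $pAp$ letters, which are precisely what $\underline A$ lacks) from the content already present in $\underline{\mcm}$. I expect that a clean way to handle this is to work with a concrete free-product realization on the amalgamated free product of Hilbert $D$-modules and track the corresponding decomposition of the Fock-type space under the projection $p$, which makes the freeness computation a direct verification rather than a word-combinatorial argument. Since Lemma~\ref{L:removesummand} is quoted from Dykema's work, I would also check that the semifinite/atomic type I setting of the present paper does not disturb the argument, the only subtlety being that traces are semifinite rather than finite, so all expectations and compressions must be justified on the appropriate finite-trace corners before passing to the general case.
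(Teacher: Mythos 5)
First, a point of orientation: the paper never proves this lemma at all --- it is quoted from Dykema (Lemma 4.3 of \cite{kenAmJM}) --- so the benchmark is Dykema's argument, whose overall architecture (show that $p\underline{\mcm}p$ and $pA$ generate $p\mcm p$, show they are free with amalgamation over $pD$ with respect to the trace-preserving expectation, then invoke uniqueness of the amalgamated free product relative to its expectation) your sketch does share. However, your proposal contains one step that is false as stated and leaves the decisive step unexecuted. The false step is the central-support argument: the closed spans of $\mcm p\mcm$ and $\underline{\mcm}p\underline{\mcm}$ in $L^{2}(\mcm)$ are \emph{not} the same, since $pAp\ominus pD$ lies in the first but is orthogonal to all of $L^{2}(\underline{\mcm})$ (a short computation using $p\underline{A}p=pD$ and freeness of $A$ and $B$ over $D$), so ``checking these span the same reduced words'' cannot succeed; relatedly, the central support of $p$ in $\underline{\mcm}$, computed on $L^{2}(\mcm)$, is the projection onto $\overline{\underline{\mcm}\,p\,\mcm}$, not onto $\overline{\underline{\mcm}\,p\,\underline{\mcm}}$. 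The correct argument is purely algebraic and needs no word combinatorics: since $A=pAp\oplus(1-p)A$ and $p\in\underline{A}$, we have $\mcm=vN(\underline{\mcm}\cup pA)$; the central support $z$ of $p$ in $\underline{\mcm}$ dominates $p$, hence commutes elementwise with $pA=pAp$, and it commutes with $\underline{\mcm}$, so $z\in Z(\mcm)$ and therefore $z\geq z_{\mcm}(p)$; the reverse inequality is the general monotonicity of central supports under inclusion, seen by comparing $\overline{\underline{\mcm}p\mch}\subseteq\overline{\mcm p\mch}$ in any faithful normal representation on a Hilbert space $\mch$.

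Second, the heart of the lemma --- that $p\underline{\mcm}p$ and $pA$ are free over $pD$ in $p\mcm p$ --- is only announced in your proposal (``I would compute \dots and show it vanishes''), and the plan glosses over exactly the points where the work lies. (i) There is no universal property for von Neumann algebraic amalgamated free products, so the ``canonical $*$-homomorphism induced by the two inclusions'' does not exist a priori; the only available route is the one you eventually name (generation plus freeness plus uniqueness of the construction), so the induced map should be discarded from the argument rather than constructed. (ii) The expectation $E_{pD}:p\mcm p\to pD$ is the trace-preserving one and is \emph{not} the compression of $E_{D}$: already $E_{pD}(p)=p$, whereas $pE_{D}(p)p\neq p$ in general (e.g.\ if $D=\bbc 1$ and $\tau(p)<1$). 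What saves the bookkeeping is the fact, absent from your sketch, that the two expectations have the same kernel on $p\mcm p$ (because $\tau(xd)=\tau(x\,pd)$ for $x\in p\mcm p$, $d\in D$, as $p$ commutes with $D$), so that $pAp\ominus pD=pAp\cap\ker E_{D}$. (iii) The structural fact driving the whole computation is the degeneration $p\underline{A}p=pD$: it forces compressed reduced $\underline{\mcm}$-words to collapse at their ends into $pD$, which is what allows alternating products of centered elements of $p\underline{\mcm}p$ and $pAp$ to be rewritten as limits of alternating centered words in $A\ominus D$ and $B\ominus D$, whence $E_{D}$ annihilates them. Until that computation is carried out, what you have is a plausible plan --- essentially Dykema's --- but not a proof; your closing remark about re-checking the semifinite setting is apt, since the paper's standing hypotheses (trace-preserving expectations, traces semifinite on $D$) are exactly what legitimize the $L^{2}$ and expectation arguments above.
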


Dykema also proved the following, as Lemma 4.2 from \cite{kenAmJM}.

\begin{lemma}\label{L:M2lemma}  Let $\mcn=\overset{r_{0}}{H}\oplus\bigoplus\limits_{i\in I}\overset{r_{i}}{F_{i}}\oplus\bigoplus\limits_{j\in J}\overset{q_{j}}{\underset{t_{j}}{M_{n_{j}}}}$, where $H$ is a diffuse hyperfinite algebra.  Let $p\in \mcn$ be a projection such that $\tau(p)=\half$ and so that neither $p$ nor $1-p$ is minimal and central in \mcn.  Let $D=\underset{1/2}{\overset{p}{\bbc}}\oplus\underset{1/2}{\overset{1-p}{\bbc}}$.  Define a matrix algebra $M_{2}(\bbc)$, where $e_{11}=p$ and $e_{22}=1-p$, and let $v=e_{12}$.  Then
\[
\mcm=\mcn*_{D}M_{2}(\bbc)=F\oplus\bigoplus_{k\in K}\overset{q'_{k}}{\underset{t'_{k}}{M_{m_{k}}}},
\]
where $F$ is either an interpolated free group factor or a diffuse hyperfinite algebra, and 
\[
K=\left\{(j,j')|j,j'\in J, j\leq p, j'\leq 1-p,\frac{t_{j}}{n_{j}}+\frac{t_{j'}}{n_{j'}}>\half\right\}.
\]
For $k=(j,j')\in K$, $n_{k}=2n_{j}n_{j'}$ and $t_{k}'=\frac{n_{k}'}{2}\left(\frac{t_{j}}{n_{j}}+\frac{t_{j'}}{n_{j}}-\half\right)$.  We also know $\fdim (\mcm)=\fdim(\mcn)+\frac{1}{4}$, which then determines $F$ if it is an interpolated free group factor ($F$ is only hyperfinite if $\mcn$ is dimension 4).  Furthermore, the inclusion of $L(F_{s_{i}})=r_{i}\mcn r_{i}\to r_{i}Fr_{i}$ is standard. for all $i\in I$ (noting if $I$ is not empty, then $F$ is an interpolated free group factor).
\end{lemma}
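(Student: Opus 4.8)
The plan is to reduce the whole computation to a single corner and then invoke the finite-dimensional and hyperfinite reduced-free-product formulas of \cite{kenDuke}. Since $v=e_{12}$ is a partial isometry in $\mcm$ with $v^{*}v=1-p$ and $vv^{*}=p$, the projections $p$ and $1-p$ are equivalent in $\mcm$, so $\mcm\cong M_{2}(p\mcm p)$; it therefore suffices to identify $p\mcm p$ and then amplify. Writing $M_{2}(\bbc)$ over its diagonal $D$ as $vN(D,u)$ with $u=e_{12}+e_{21}$ a selfadjoint unitary satisfying $upu=1-p$ and $E_{D}(u)=0$, the algebra $\mcm$ is generated by $\mcn$ together with the $D$-free symmetry $u$, and $p\mcm p$ is generated by $p\mcn p$, the transported corner $v\bigl[(1-p)\mcn(1-p)\bigr]v^{*}$, and the randomness carried by $v$.

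The second step reduces the computation to one summand at a time. Applying Lemma \ref{L:removesummand} to the central summands of $\mcn$ successively, each summand can be flattened to its part of $D$ and then reattached as a free product over the corresponding corner; the fully flattened base case is $D*_{D}M_{2}(\bbc)=M_{2}(\bbc)$. Reattaching a diffuse summand ($H$ or an $F_{i}$) through this linking data is, by the reduced-free-product computations of \cite{kenDuke}, the operation that builds up the diffuse part, and the outcome is an interpolated free group factor $F$ except in the degenerate case $\dim\mcn=4$, where it is diffuse hyperfinite.

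The heart of the argument, and the step I expect to be the main obstacle, is the matrix bookkeeping. For a pair of matrix summands $M_{n_{j}}\le p$ and $M_{n_{j'}}\le 1-p$, the algebra generated by $M_{n_{j}}\oplus M_{n_{j'}}$ and the linking corner of $v$ is a free product of two matrix blocks over the two-point amalgam $\bbc p\oplus\bbc(1-p)$; by the explicit free-product-of-matrix-algebras computation this yields a block $M_{2n_{j}n_{j'}}$ (a matricial corner of size $n_{j}n_{j'}$ doubled under the amplification) together with a diffuse remainder, and the matricial block survives rather than being absorbed into $F$ exactly when there is uncancelled trace, i.e. when $\tfrac{t_{j}}{n_{j}}+\tfrac{t_{j'}}{n_{j'}}>\half$, with surviving trace $t'_{k}$ as stated. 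Carrying this out for every admissible pair, checking that distinct pairs contribute orthogonal central blocks indexed by $K$, and evaluating the surviving sizes and traces is the delicate part; everything else is governed by freeness and so produces no further matricial summands.

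Finally I would pin down $F$ and verify the inclusions. The isomorphism class of the interpolated free group factor is determined by its free dimension, so I would compute $\fdim(\mcm)=\fdim(\mcn)+\tfrac14$, which follows by summing the free-dimension change contributed by each reattachment step, consistently with $\fdim(M_{2}(\bbc))-\fdim(D)=\tfrac34-\half=\tfrac14$. For the standardness of $L(F_{s_{i}})=r_{i}\mcn r_{i}\to r_{i}Fr_{i}$, I would use that each $F_{i}$ is adjoined to the growing free group factor by a free generator, which is a standard inclusion by property (2); since compositions and compressions of standard embeddings are standard by properties (3) and (1), tracing $F_{i}$ through the inductive construction of $F$ yields the claim.
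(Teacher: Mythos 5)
The first thing to note is that the paper contains no proof of this statement at all: it is quoted verbatim as Lemma 4.2 of \cite{kenAmJM}, so your proposal has to be measured against what a complete argument requires, and it has a genuine gap at exactly the point where the lemma's content lies. The hypotheses only assume that neither $p$ nor $1-p$ is minimal \emph{and} central, so $p$ is typically not central in $\mcn$ and summands of $\mcn$ typically straddle $p$ and $1-p$; nothing in your outline actually computes this interaction across $p$. Your first step treats $p\mcm p$ as built from the two corners $p\mcn p$ and $v\bigl[(1-p)\mcn(1-p)\bigr]v^{*}$ plus unspecified ``randomness carried by $v$'', and your later steps compute as if the corner were a reduced free product to which the results of \cite{kenDuke} apply. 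But $p\mcm p$ also contains all of $p\mcn(1-p)v^{*}$, and these cross terms are everything. Concretely, take $\mcn=M_{2}(\bbc)$ with $p$ a diagonal minimal projection (allowed, since $p$ is minimal but not central): then $p\mcn p=\bbc p$ and $v(1-p)\mcn(1-p)v^{*}=\bbc p$, so the algebra the two corners generate is just $\bbc$, whereas by the lemma itself $K=\emptyset$, $F$ is diffuse hyperfinite of free dimension $1$, so $p\mcm p$ is diffuse. The free-product decomposition of the corner is valid only when $p$ is central in $\mcn$; that is the easy half of Dykema's argument, and the hard half --- showing the off-diagonal data of $\mcn$ gets absorbed into $F$ --- is what your proposal omits.

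The same issue defeats the peeling scheme and the matrix bookkeeping. When you reattach, via Lemma \ref{L:removesummand}, a summand whose central support $q$ meets both $p$ and $1-p$, the reattachment step is an amalgamated free product over the two-dimensional algebra $\bbc qp\oplus\bbc q(1-p)$ --- precisely the kind of product the lemma is meant to compute, and not a reduced free product, so \cite{kenDuke} says nothing about it. Appealing instead to Theorem \ref{T:multmatrix} is circular relative to \cite{kenAmJM} (there, Theorem 5.1 is deduced from this lemma), and in any case that theorem only gives membership in $\mcr_{3}$ and the free-dimension formula, not the block-by-block structure, the trace values $t_{k}'$, or the standardness data you need. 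Likewise your key assertion that ``everything else is governed by freeness and so produces no further matricial summands'' is exactly the statement requiring proof: one must show that a straddling matrix summand, whose two halves are linked both by its own off-diagonal partial isometries and by $v$, produces no atoms, and property (2) of standard embeddings concerns reduced free products, so it does not cover the standardness of $r_{i}\mcn r_{i}\to r_{i}Fr_{i}$ when $r_{i}$ straddles. Your overall frame --- pass to $p\mcm p$, pair blocks under $p$ with blocks under $1-p$, and the count $\fdim(M_{2})-\fdim(D)=\frac{1}{4}$ --- is the right shape of the answer, but the missing machinery for the cross terms is the proof, not a detail.
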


The amalgamated free product of multimatrix algebras were described by Dykema as Theorem 5.1 in \cite{kenAmJM}.
\begin{theorem}\label{T:multmatrix}   Let $A$ and $B$ be multimatrix algebras with subalgebra $D$.  Then $A*_{D}B$ is in $\mcr_{3}$.  If $D$ is finite dimensional it is in $\mcr_{2}$ and the hyperfinite part is type I.  Furthermore $\fdim(A*_{D}B)=\fdim(A)+\fdim(B)-\fdim(D)$.
\end{theorem}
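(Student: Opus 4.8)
The plan is to prove all three statements simultaneously, first for finite dimensional $D$ by induction and then to obtain the general atomic case by an exhaustion argument. For a multimatrix algebra $M=\bigoplus_{s}\underset{t_{s}}{M_{n_{s}}}$ with a tracial state the free dimension reduces to $\fdim(M)=1-\sum_{s}t_{s}^{2}$; in particular $\fdim(M_{2})-\fdim(\bbc\oplus\bbc)=\tfrac34-\tfrac12=\tfrac14$. Thus Lemma \ref{L:M2lemma} is precisely the special case $A=M_{2}$, $D=\bbc\oplus\bbc$ of the theorem, and I would use it as the base case of the induction: it already produces an interpolated free group factor (or, in the four dimensional case, a type I diffuse hyperfinite algebra) together with matrix summands, realizes the free dimension increment $\tfrac14$ demanded by the formula, and records that the relevant reduced inclusions are standard.

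For the inductive step I would peel matrix summands off $A$ using Lemma \ref{L:removesummand}. Taking $p$ to be the central projection of a single block $pA=M_{k}$ and $\underline{A}=pD\oplus(1-p)A$, that lemma gives $p\mcm p=p\underline{\mcm}p*_{pD}(pA)$ with $\underline{\mcm}=\underline{A}*_{D}B$, and $\underline{A}$ has strictly smaller off-diagonal content than $A$, so $\underline{\mcm}$ and hence $N:=p\underline{\mcm}p$ are known by induction. This isolates the essential computation, an amalgamated free product $N*_{\tilde D}M_{k}$ of a known algebra with a single matrix factor over a unital multimatrix subalgebra $\tilde D=pD\subseteq M_{k}$. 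I would evaluate this by a secondary induction on $k$: compressing by a minimal projection of $M_{k}$ reduces the matrix size and, through the compression behaviour of amalgamated free products together with the standard embedding properties listed above, brings the computation down to the $k=2$ instance supplied by Lemma \ref{L:M2lemma}. Finally the statement in Lemma \ref{L:removesummand} about the central support of $p$ is used in the reverse direction to reassemble $\mcm$ from the corner $p\mcm p$.

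The free dimension formula would then follow by checking that $\fdim(A)+\fdim(B)-\fdim(D)$ is preserved under each of these reductions -- the removal of a summand and the single matrix step -- so that it telescopes from the base case to the general identity. Because $\fdim$ determines an interpolated free group factor, the formula simultaneously identifies the factor summands, while the matrix summands and the type I diffuse hyperfinite summands are read off from the index set $K$ and the exceptional four dimensional case of Lemma \ref{L:M2lemma}; the standard embedding property that a composition of standard embeddings is standard guarantees that the factors produced at different stages are consistently joined. When $D$ is finite dimensional only finitely many blocks are processed, so only finitely many free group factors occur and every diffuse hyperfinite summand is type I, placing $A*_{D}B$ in $\mcr_{2}$ with type I hyperfinite part. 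For a general atomic type I $D$ I would write $D$ as the closure of an increasing union of finite dimensional subalgebras $D_{n}$, with compatible finite dimensional $A_{n}\subseteq A$ and $B_{n}\subseteq B$, so that $A*_{D}B$ is the inductive limit of the algebras $A_{n}*_{D_{n}}B_{n}\in\mcr_{2}$; the inductive limit property of standard embeddings shows the limit of each chain of free group factor summands is again an interpolated free group factor, the countably many summands thereby accumulated give membership in $\mcr_{3}$, and $\fdim$ passes to the limit.

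The main obstacle is the single block computation $N*_{\tilde D}M_{k}$ together with its bookkeeping. One must control exactly how the compression and induction on $k$ interact with the subalgebra $\tilde D$, verify that the free group factors created at successive stages are glued by genuine standard embeddings (so that their limit has the free dimension predicted by the telescoping sum), and keep uniform track of which minimal projections of $D$ stay large enough to contribute matrix summands to the sets $K$ rather than being absorbed into the free group factor part. Maintaining this control simultaneously across all blocks of $A$ and $B$, and then stably across the exhaustion $D_{n}\uparrow D$ so that the summand structure and free dimensions converge correctly, is the combinatorial heart of the argument.
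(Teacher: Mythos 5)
First, a point of comparison: the paper does not actually prove this statement --- it is quoted as Theorem 5.1 of \cite{kenAmJM} --- so your proposal can only be measured against Dykema's proof and against the arguments this paper does carry out in the same style (Lemma \ref{L:abelianD} and the proofs of Theorems \ref{maintheorem} and \ref{T:maininftheorem}). Your overall architecture is indeed that template: peel blocks with Lemma \ref{L:removesummand}, use Lemma \ref{L:M2lemma} as the engine, track standard embeddings, and take an inductive limit for infinite-dimensional $D$. However, three of your steps have genuine gaps. The first is the base case. Peeling a block with Lemma \ref{L:removesummand} replaces $pA=M_{k}$ by $pD$, so iterating over all blocks of $A$ and then of $B$ terminates not at the situation of Lemma \ref{L:M2lemma} but at a residual product $\left(\bigoplus_{i}p_{i}D\right)*_{D}\left(\bigoplus_{j}q_{j}D\right)$, where $p_{i},q_{j}$ are the central projections of the blocks of $A$ and $B$; for instance $(\bbc\oplus\bbc)*_{\bbc}(\bbc\oplus\bbc)$. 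No matrix block survives there, so Lemma \ref{L:M2lemma}, which amalgamates with $M_{2}$ over its own diagonal, cannot be applied. This residual case is precisely where the diffuse hyperfinite summands in the conclusion come from, and handling it requires the reduction to abelian $D$ (Lemma \ref{L:abelianD}, based on Lemma 5.2 of \cite{kenAmJM}) followed by the non-amalgamated free product computations of \cite{kenDuke}, after cutting by the minimal projections of $D$, which are central in this residual product; neither ingredient appears in your plan.

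Second, your ``secondary induction on $k$'' is not a valid mechanism. Compressing by a minimal projection $e$ of $M_{k}$ collapses $M_{k}$ to $\bbc e$, not to $M_{k-1}$; moreover, compression of $N*_{\tilde D}M_{k}$ by a projection $f$ interacts well with freeness only when $f$ lies in the amalgam $\tilde D$, and even then the corner $f(N*_{\tilde D}M_{k})f$ is generally strictly larger than $vN(fNf\cup fM_{k}f)$ (already for $M_{2}*_{\bbc^{2}}M_{2}$ the corner $e_{11}(\cdot)e_{11}$ contains a Haar unitary while $e_{11}M_{2}e_{11}$ is scalar), so you do not land in a smaller amalgamated free product. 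The step that actually works, in \cite{kenAmJM} and in the proofs of Theorems \ref{maintheorem} and \ref{T:maininftheorem}, is to adjoin the off-diagonal matrix units of $M_{k}$ one partial isometry $v$ at a time, compress by $q=vv^{*}+v^{*}v$, identify the corner as $q\mcn q*_{\bbc vv^{*}\oplus\bbc v^{*}v}M_{2}$, apply Lemma \ref{L:M2lemma} there, and then recover the whole algebra using central supports and Lemma \ref{L:removesummand}. Third, your exhaustion for infinite-dimensional $D$ fails as stated: if $D_{n}\subsetneq D$ and $A_{n}\supseteq D_{n}$, $B_{n}\supseteq D_{n}$ are finite-dimensional subalgebras, then $A_{n}$ and $B_{n}$ are free with amalgamation over $D$, not over $D_{n}$; indeed for $a\in A_{n}$, $b\in B_{n}$ with $E_{D_{n}}(a)=E_{D_{n}}(b)=0$ one computes $E_{D}(ab)=E_{D}(a)E_{D}(b)$, which need not vanish since $E_{D}(a)$ and $E_{D}(b)$ can be nonzero elements of $D\ominus D_{n}$. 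Hence $vN(A_{n}\cup B_{n})\subseteq A*_{D}B$ is not isomorphic to $A_{n}*_{D_{n}}B_{n}$, and $A*_{D}B$ is not the inductive limit you describe. The correct exhaustion, as in the proof of Theorem \ref{T:maininftheorem}, compresses by an increasing sequence of projections $q_{k}\in D$ --- the corners $q_{k}Aq_{k}$ and $q_{k}Bq_{k}$ genuinely are free over $q_{k}Dq_{k}$ --- and interleaves these corner products with the auxiliary direct sums $\mcn(i,j,k)$ so that the connecting maps can be shown to be substandard.
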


The following results were proved in \cite{minearxiv}

\begin{lemma}\label{gluelemma}

Let $\mcn=(M_{m}\oplus M_{n}\oplus B)*_{D}C$ and $\mcm=(M_{n+m}\oplus B)*_{D}C$, where $B$, $C$ are semifinite von Neumann algebras and $D=\bigoplus_{i=1}^{K}\overset{p_{i}^{D}}{\bbc}$ with $K\in\bbn\cup \{\infty\}$.   $\mcn$ is included in $\mcm$ by including $M_{m}$ and $M_{n}$ as blocks on the diagonal of $M_{n+m}$, and $B$ and $C$ by the identity.  Assume there exists a partial isometry in $\mcn$ between minimal projections in $M_{m}$ and $M_{n}$ (for example if there exists a factor $\mathcal{F}$ with $M_{m}\oplus M_{n}\subseteq \mathcal{F}\subseteq \mcn$).  Then for any minimal projection $p\in M_{m}$ such that $p\leq p_{i}^{D}$ for some $i$, $p\mcn p *L(\bbz)\cong p\mcm p$.

\end{lemma}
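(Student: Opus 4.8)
The plan is to realize the passage from $\mcn$ to $\mcm$ as the free adjunction of a single Haar unitary after compression by $p$. Write $A=M_m\oplus M_n\oplus B$ and $A'=M_{m+n}\oplus B$, so that $A\subseteq A'$ (with $M_m\oplus M_n$ the block diagonal of $M_{m+n}$, and $B$ untouched), $\mcn=A*_D C$ and $\mcm=A'*_D C$. The first step is structural: using associativity of the amalgamated free product together with the tower of trace preserving conditional expectations $D\subseteq A\subseteq A'$, I would rewrite
\[
\mcm=A'*_D C\cong A'*_A(A*_D C)=A'*_A\mcn ,
\]
so that $A'$ and $\mcn$ are free over $A$ inside $\mcm$. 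Here the expectation $A'\to A$ is the trace preserving one, which on the matrix part is compression onto the block diagonal $M_m\oplus M_n\subseteq M_{m+n}$ and is the identity on $B$; compatibility $E_D=E_D\circ E_A$ is automatic since all expectations preserve the trace.

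Next I set up the two connectors. Let $e=p$ be the given minimal projection of $M_m$; after composing with a matrix unit in $M_m$ the hypothesis provides a partial isometry $w\in\mcn$ and a minimal projection $f$ of $M_n$ with $w^{*}w=e$, $ww^{*}=f$, and I let $v\in M_{m+n}\subseteq A'$ be the matrix unit with $v^{*}v=e$, $vv^{*}=f$. Since $\mcm=vN(\mcn\cup A')=vN(\mcn\cup\{v\})$ and $u:=w^{*}v$ is a unitary of $e\mcm e$ with $v=wu$, I get $\mcm=vN(\mcn\cup\{u\})$ and hence $e\mcm e=vN(e\mcn e\cup\{u\})$ (using $u=eue$). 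The decisive point, which uses that $e$ and $f$ lie in distinct summands $M_m$ and $M_n$ of $A$, is the block orthogonality $eAf=fAe=0$ together with $eAe=\bbc e$ (as $e$ is minimal in $M_m$). These give $E_A(v)=E_A(w)=0$, and for centered $a\in e\mcn e$ (i.e.\ $\tau(ea)=0$) one has $E_A(a)=0$, $E_A(aw^{*})=0$, $E_A(wa)=0$, and $E_A(waw^{*})=0$, since each of these expectations lands in one of $eAf$, $fAe$, $eAe$, $fAf$ and is killed by centering.

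The heart of the argument is then a connecting unitary computation: $u=w^{*}v$ is a Haar unitary in $(e\mcm e,\tau_e)$ which is free from $e\mcn e$. To see this I substitute $u=w^{*}v$ and $u^{*}=v^{*}w$ into an arbitrary alternating moment $\tau_e(a_0 u^{k_1}a_1\cdots u^{k_l}a_l)$ with $a_i\in e\mcn e$ centered and $k_i\neq 0$, and absorb each $a_i$ into the adjacent $w^{(*)}\in\mcn$. The resulting word is an alternating product of letters, each of which is either $v^{\pm}\in A'\ominus A$ or one of $a_i$, $a_iw^{*}$, $wa_i$, $wa_iw^{*}\in\mcn\ominus A$; by the identities above every letter lies in $\ker E_A$, so freeness over $A$ forces $E_A$ of the word, and therefore its trace, to vanish. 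The same computation applied to a single power $u^{k}=(w^{*}v)^{k}$ (with the $v$'s separated by $w^{*}$) gives $\tau_e(u^{k})=0$ for $k\neq 0$, so $u$ is Haar. Consequently $e\mcm e=vN(e\mcn e\cup\{u\})\cong e\mcn e * L(\bbz)$, which is the assertion with $p=e$.

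I expect the main obstacle to be the first (structural) step: justifying the associativity isomorphism $\mcm\cong A'*_A\mcn$ with the correct conditional expectations in the semifinite setting, that is, proving $A'$ is free from $\mcn$ over $A$ via the trace preserving expectation $A'\to A$. Once this regrouping is in place, the hypothesis that $e$ and $f$ sit in different blocks of $A$ (so that $w$ genuinely crosses summands, forcing all the relevant conditional expectations to vanish) makes the freeness computation routine; this is also exactly where the assumption on the existence of the connecting partial isometry $w$ is used. Finally I note that $\tau(e)<\infty$ because $e$ is a minimal projection of a matrix block, so $e\mcm e$ is finite and the reduced free product $e\mcn e * L(\bbz)$ is well defined despite the ambient semifiniteness.
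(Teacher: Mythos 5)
First, a caveat: this paper never proves Lemma \ref{gluelemma} at all --- it is quoted among the ``Previous Results'' imported from \cite{minearxiv} --- so the comparison below is with the technique of that earlier paper and the Dykema arguments it adapts. Your core argument is exactly that technique, and you execute it correctly: compress by $e=p$, trade the new matrix unit $v\in M_{n+m}$ for the unitary $u=w^{*}v\in e\mcm e$, check $e\mcm e=vN(e\mcn e\cup\{u\})$, and prove $u$ is a Haar unitary free from $e\mcn e$ by expanding alternating moments into words whose letters $a_{i}$, $a_{i}w^{*}$, $wa_{i}$, $wa_{i}w^{*}$, $v^{\pm1}$ are all centered. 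The block relations $eAf=fAe=0$, $eAe=\bbc e$, $fAf=\bbc f$ for $A=M_{m}\oplus M_{n}\oplus B$ are precisely what make those letters centered, and your alternation pattern (every $v^{\pm1}$ separated by a single $\mcn$-letter) is right.

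The genuine incompleteness --- which you flag yourself but do not resolve --- is the regrouping $\mcm\cong A'*_{A}\mcn$, i.e.\ the claim that $A'=M_{n+m}\oplus B$ and $\mcn=vN(A\cup C)$ are free with amalgamation over $A$ with respect to the trace-preserving expectation $E_{A}$ inside $\mcm=A'*_{D}C$. Every vanishing in your moment computation is deduced from this freeness, so as written the proof has no foundation; it is incomplete at its load-bearing step rather than wrong. The fact is true and provable by an argument of the same flavor as the rest: elements of $\mcn\cap\ker E_{A}$ are limits of linear combinations of words $a_{0}c_{1}a_{1}\cdots c_{k}a_{k}$ with $k\geq1$, $c_{i}\in C\cap\ker E_{D}$, interior $a_{i}\in A\cap\ker E_{D}$ (to see that $E_{A}$ kills such a word $W$, note $E_{D}(aWa')=0$ for all $a,a'\in A$ by freeness over $D$, then use $E_{D}=E_{D}\circ E_{A}$ and faithfulness); substituting these expansions into an alternating $(A',\mcn)$-word over $A$ and merging each junction $a_{k}\,x\,a_{0}'$ into a single $A'$-letter lying in $\ker E_{D}$ produces an alternating $(A',C)$-word over $D$, which $E_{D}$ kills, and the same sandwich argument upgrades $E_{D}=0$ to $E_{A}=0$. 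Supplying this makes your proof complete. Two smaller points: your justification that $\tau(p)<\infty$ (``minimal projection of a matrix block'') is not sufficient in the semifinite setting --- finiteness comes from $p\leq p_{i}^{D}$ together with semifiniteness of $\tau$ on $D$, or equivalently from $pAp=\bbc p$ and semifiniteness of $\tau$ on $A$; and the existence of the trace-preserving expectation $E_{A}$ itself requires $\tau|_{A}$ to be semifinite, which holds for the same reason and should be stated.
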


\begin{lemma}\label{tensorlemma1}  Let $\mcm=((M_{n}\otimes A)\oplus B)*_{D}C$ and $\mcn=(M_{n}\oplus B)*_{D}C$ for $A$ is a finite von Neumann algebras and $B$ and $C$ are semifinite, and $D=\bigoplus_{i=1}^{K}\overset{p_{i}^{D}}{\bbc}$,$k\in \bbn\cup\{\infty\}$, where $C,B, M_{n}$ have expectations onto $D$ and where $E_{D}^{M_{n}\otimes A}=E_{D}^{M_{n}}\otimes \tau_{A}$.  Let $p$ be a minimal projection in $M_{n}$, with  $p\leq p_{i}^{D}$ for some $i$.  Then $p\mcn p*A\cong p\mcm p$.
\end{lemma}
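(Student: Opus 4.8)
The plan is to avoid any direct moment computation and instead manipulate the amalgamated free product structurally, first raising the amalgam and then compressing. Throughout write $P_{1}=(M_{n}\otimes A)\oplus B$, $\tilde N=M_{n}\oplus B$ and identify $M_{n}=M_{n}\otimes 1\hookrightarrow M_{n}\otimes A$, so that $D\subseteq\tilde N\subseteq P_{1}$, $\mcm=P_{1}*_{D}C$, and $\mcn=\tilde N*_{D}C$. The copy of $A$ we care about is $p(M_{n}\otimes A)p\subseteq p\mcm p$, and the inclusion $\mcn\hookrightarrow\mcm$ restricts to $p\mcn p\subseteq p\mcm p$.

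First I would verify the tower condition $E_{D}^{P_{1}}=E_{D}^{\tilde N}\circ E_{\tilde N}^{P_{1}}$, where $E_{\tilde N}^{P_{1}}=(\mathrm{id}_{M_{n}}\otimes\tau_{A})\oplus\mathrm{id}_{B}$. This is exactly where the hypothesis $E_{D}^{M_{n}\otimes A}=E_{D}^{M_{n}}\otimes\tau_{A}$ is used: on the $M_{n}\otimes A$ summand, following $\mathrm{id}_{M_{n}}\otimes\tau_{A}$ by $E_{D}^{M_{n}}$ reproduces $E_{D}^{M_{n}}\otimes\tau_{A}$. Given this compatibility, the associativity (``change of amalgam'') property of amalgamated free products lets me rewrite $\mcm=P_{1}*_{D}C=P_{1}*_{\tilde N}(\tilde N*_{D}C)=P_{1}*_{\tilde N}\mcn$; that is, $P_{1}$ is free from $\mcn$ with amalgamation over $\tilde N=M_{n}\oplus B$, and together they generate $\mcm$. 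I would deduce this staged identity from the universal property together with uniqueness of the trace-preserving expectation, checking that the expectation onto $\tilde N$ produced by the two constructions agrees.

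Next I would compress by $p$. Since $p\in M_{n}\subseteq\tilde N$ lies in the amalgam, the compression formula for amalgamated free products gives $p\mcm p=(pP_{1}p)*_{p\tilde N p}(p\mcn p)$. Now the corners collapse: because $p$ is a minimal projection of $M_{n}$ and $pB=0$, we get $pP_{1}p=p(M_{n}\otimes A)p=pM_{n}p\otimes A=\bbc p\otimes A\cong A$, while $p\tilde N p=pM_{n}p\oplus pBp=\bbc p$. Hence the amalgam degenerates to the scalars and $p\mcm p\cong A*_{\bbc}(p\mcn p)=p\mcn p*A$, as claimed. The condition $p\le p_{i}^{D}$ guarantees that $\tau(p)<\infty$ (a minimal projection of the matrix block has finite trace even when its supporting atom of $D$ does not) and that the normalized trace on $p\mcm p$ is a faithful normal tracial state, so the resulting free product is an ordinary scalar-amalgamated free product of finite von Neumann algebras.

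The main obstacle is carrying the two structural moves—raising the amalgam from $D$ to $\tilde N$, then compressing by $p\in\tilde N$—rigorously in the semifinite setting, where $B$ and $C$, and hence $\tilde N$ and $\mcn$, need not be finite. I would need the relevant conditional expectations to remain normal, faithful and trace-preserving inside Ueda's semifinite amalgamated free product framework, and I would track central supports carefully so that the compression formula genuinely yields amalgamation over $p\tilde N p=\bbc p$ rather than over a larger corner. Once these expectation-compatibility and compression facts are secured, the computation of the three corners is immediate and the isomorphism follows.
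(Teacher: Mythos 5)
A preliminary remark: this paper does not actually prove Lemma \ref{tensorlemma1}; it is imported from \cite{minearxiv}, so your proposal has to be judged on its own merits rather than against an in-paper argument. Your two-move strategy (raise the amalgam from $D$ to $\tilde N=M_{n}\oplus B$, then compress by $p$; here $P_{1}=(M_{n}\otimes A)\oplus B$ in your notation) is attractive and can in fact be completed, but as written it leans on two facts cited as if they were standard, and one of them is false in the generality in which you invoke it. The milder issue is the tower identity $\mcm=P_{1}*_{\tilde N}(\tilde N*_{D}C)$: this is correct in the trace-preserving setting, but it is not a formal consequence of a universal property; its proof is itself a freeness computation (decompose each element of $P_{1}$ into its $E_{\tilde N}$-centered part plus an element of $\tilde N$, check that the resulting alternating words are $E_{\tilde N}$-centered, then use uniqueness of the trace-preserving amalgamated free product), and in the semifinite case it must be run inside Ueda's framework, as you acknowledge.

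The serious gap is the ``compression formula'' $p(X*_{N}Y)p=(pXp)*_{pNp}(pYp)$ for a projection $p$ in the amalgam $N$. Compression by $p\in N$ does preserve freeness over $N$, but it does not in general preserve generation, and the formula is false: take $X=Y=M_{2}$ and $N=\bbc\oplus\bbc$ the diagonal, with $p$ a minimal diagonal projection. Then $(pXp)*_{pNp}(pYp)=\bbc$, whereas by Lemma \ref{L:M2lemma} (the dimension-$4$ case) $M_{2}*_{\bbc\oplus\bbc}M_{2}$ is a diffuse hyperfinite algebra, so $p(X*_{N}Y)p$ is diffuse. The failure is that words such as $pe_{12}f_{21}p$ (with $e_{ij},f_{ij}$ the matrix units of the two copies) cannot be cut into products of elements of $pXp$ and $pYp$, because $N$ contains no partial isometry moving $p$ to $1-p$. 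In your situation the formula does hold, but only because of two structural features that your outline never isolates and which are exactly the content of the lemma: (i) $p$ is connected to every diagonal unit $e_{jj}$ of $M_{n}$ by the partial isometries $e_{j1}$ lying \emph{inside the amalgam} $\tilde N$; and (ii) the corner of $P_{1}$ missed by the central support $1_{M_{n}}$ of $p$ in $\tilde N$ is degenerate, $1_{B}P_{1}1_{B}=B=1_{B}\tilde N1_{B}$, equivalently $P_{1}\ominus\tilde N$ is killed on both sides by $1_{B}$. Granting (i) and (ii), generation is proved by hand: in any alternating word in $(P_{1}\ominus\tilde N)\cup(\mcn\ominus\tilde N)$ insert $1=\sum_{j}e_{j1}pe_{1j}+1_{B}$ between consecutive letters; every insertion point is adjacent to a letter of $P_{1}\ominus\tilde N$, so all terms involving $1_{B}$ vanish by (ii), and the surviving terms factor into products of elements of $pP_{1}p$ and $p\mcn p$. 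Your closing paragraph treats ``securing the compression fact'' as a technicality and worries only that the amalgam might come out larger than $\bbc p$; the real risk is that $pP_{1}p\cup p\mcn p$ might fail to generate $p\mcm p$ at all, and ruling that out is the heart of any proof along these lines.
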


\begin{theorem}\label{maintheorem}  Let $A$ and $B$ be hyperfinite von Neumann Algebras, with finite dimensional subalgebra $D$.  Then $A*_{D}B$ is in $\mcr_{2}$.  Furthermore $\fdim(A*_{D}B)=\fdim(A)+\fdim(B)-\fdim(D)$.
\end{theorem}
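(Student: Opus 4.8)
The plan is to realize $A*_{D}B$ as an inductive limit of amalgamated free products of multimatrix algebras and to pass to the limit using Theorem \ref{T:multmatrix} at each finite stage. Since $A$ and $B$ are hyperfinite and contain the finite dimensional algebra $D$, I would choose increasing sequences of multimatrix subalgebras $D\subseteq A_{1}\subseteq A_{2}\subseteq\cdots$ with $\overline{\bigcup_{n}A_{n}}=A$ and $D\subseteq B_{1}\subseteq B_{2}\subseteq\cdots$ with $\overline{\bigcup_{n}B_{n}}=B$, arranged so that the trace preserving conditional expectation onto $D$ restricts correctly to each $A_{n}$ and $B_{n}$. From the Hilbert space description of the amalgamated free product one checks that $A*_{D}B$ is generated by $\bigcup_{n}(A_{n}*_{D}B_{n})$, so that $A*_{D}B=\overline{\bigcup_{n}\mcm_{n}}$ with $\mcm_{n}=A_{n}*_{D}B_{n}$ and with trace preserving inclusions $\mcm_{n}\hookrightarrow\mcm_{n+1}$. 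By Theorem \ref{T:multmatrix} each $\mcm_{n}$ lies in $\mcr_{2}$ and satisfies $\fdim(\mcm_{n})=\fdim(A_{n})+\fdim(B_{n})-\fdim(D)$.

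Next I would analyze the connecting inclusions $\mcm_{n}\hookrightarrow\mcm_{n+1}$. Passing from $A_{n}$ to $A_{n+1}$ (and likewise for $B$) refines the multimatrix structure, and such a refinement decomposes into the two elementary moves of replacing a block $M_{k}$ by a tensor product $M_{k}\otimes(\,\cdot\,)$ and of fusing minimal projections of distinct blocks into a single larger block. Compressing by a minimal projection of $D$ and applying Lemma \ref{tensorlemma1} and Lemma \ref{gluelemma} (using Lemma \ref{L:removesummand} to discard the inert summands and Lemma \ref{L:M2lemma} for the base fusion computation) identifies the interpolated free group factor summands of $\mcm_{n}$ and $\mcm_{n+1}$ and exhibits the corresponding compressed inclusion as a free product with a copy of $L(\bbz)$ or with a finite von Neumann algebra. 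By property (2) of standard embeddings such an inclusion is standard, by property (1) standardness passes between an algebra and its compressions, and by property (3) the composite of the finitely many elementary moves is again standard; hence the restriction of $\mcm_{n}\hookrightarrow\mcm_{n+1}$ to each interpolated free group factor summand is a standard embedding.

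Finally I would take the limit. The combinatorial structure of $\mcm_{n}$, namely the number of summands and their central supports, is determined by the matching of the central projections of $A$ and $B$ over $D$ and therefore stabilizes at a finite stage, so for large $n$ the summands of $\mcm_{n}$ are in a central-support-preserving bijection. On the hyperfinite summands the limit of the increasing matrix and diffuse pieces is again finite hyperfinite, while on each interpolated free group factor summand property (4) of standard embeddings shows the inductive limit of $L(F_{s_{n}})$ along standard embeddings is $L(F_{s})$ with $s=\lim_{n}s_{n}$. Thus $A*_{D}B$ is the direct sum of a finite hyperfinite algebra and finitely many interpolated free group factors, i.e.\ it lies in $\mcr_{2}$. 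For the free dimension I would use continuity along these approximations: since the inputs $A,B$ are hyperfinite their only matrix and diffuse parts are approximated by matrix blocks whose minimal projection traces tend to $0$, so $\fdim(A_{n})\to\fdim(A)$ and $\fdim(B_{n})\to\fdim(B)$, and since the free group parameters converge we also get $\fdim(\mcm_{n})\to\fdim(A*_{D}B)$; passing to the limit in the additive formula for $\mcm_{n}$ yields $\fdim(A*_{D}B)=\fdim(A)+\fdim(B)-\fdim(D)$.

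The main obstacle is the middle step: verifying that the connecting inclusions restrict to standard embeddings on the free group factor summands. This requires decomposing each refinement of the multimatrix data into the elementary tensor and fusion moves, tracking carefully which compressions and summands correspond across $\mcm_{n}$ and $\mcm_{n+1}$, and confirming that Lemmas \ref{tensorlemma1} and \ref{gluelemma} genuinely present the compressed inclusions as free products to which the standard embedding properties apply. A secondary technical point is justifying that the number of direct summands stabilizes, so that the limit contains only finitely many interpolated free group factors and lands in $\mcr_{2}$ rather than the larger class $\mcr_{3}$.
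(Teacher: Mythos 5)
Your proposal is correct and follows essentially the same route as the paper's (namely the proof from \cite{minearxiv} that the paper cites for Theorem \ref{maintheorem} and then reuses and extends in proving Theorem \ref{T:maininftheorem}): approximate $A$ and $B$ by chains of multimatrix subalgebras containing $D$, apply the multimatrix result at each stage, decompose the connecting inclusions into simple steps handled by Lemmas \ref{tensorlemma1}, \ref{gluelemma} and \ref{L:M2lemma} (with Lemma \ref{L:removesummand}), and pass to the inductive limit along standard embeddings using their listed properties. The two difficulties you flag --- standardness of the connecting maps restricted to the free group factor summands, and stabilization of the number of such summands so the limit lands in $\mcr_{2}$ --- are exactly the points the paper's argument settles with those lemmas and a choice of sufficiently fine approximating chains.
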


The following was proved for $\mcr_{1}$ in \cite{kenLMS} and $\mcr_{2}$ in \cite{minearxiv}:

\begin{theorem}  Both $\mcr_{1}$ and $\mcr_{2}$ are closed under amalgamated free products over finite dimensional subalgebras.  In these cases $\fdim(A*_{D}B)=\fdim(A)+\fdim(B)-\fdim(D)$.
\end{theorem}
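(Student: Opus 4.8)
The plan is to deduce everything from the two already-established base cases, namely Theorem~\ref{maintheorem} (both factors hyperfinite) and Theorem~\ref{T:multmatrix} (both factors multimatrix), by peeling the interpolated free group factor summands off one at a time. Since every algebra in $\mcr_{1}$ is a finite direct sum of a finite hyperfinite algebra and finitely many interpolated free group factors, we have $\mcr_{1}\subseteq\mcr_{2}$, so I would first prove the closure statement and the identity $\fdim(A*_{D}B)=\fdim(A)+\fdim(B)-\fdim(D)$ for $\mcr_{2}$ and afterwards revisit $\mcr_{1}$. Write $A=H_{A}\oplus\bigoplus_{i=1}^{s}\overset{p_{i}}{L(F_{r_{i}})}$ and $B=H_{B}\oplus\bigoplus_{j=1}^{t}\overset{q_{j}}{L(F_{r'_{j}})}$ with $H_{A},H_{B}$ finite hyperfinite and the $p_{i},q_{j}$ central, and set $\mcm=A*_{D}B$.

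Before the main induction I would normalize $D$. Writing $D=\bigoplus_{k}\overset{f_{k}}{M_{d_{k}}}$ and choosing a minimal projection $e_{k}$ in each block, the projection $e=\sum_{k}e_{k}$ lies in $D$, has central support $1$, and satisfies $e\mcm e=eAe*_{eDe}eBe$ with $eDe$ abelian. Since compression by such an $e$ carries $\mcr_{2}$ into $\mcr_{2}$ and changes free dimension in a way dictated by the $\tau(p_{i})^{2}$ weights in its definition, it is enough to treat an atomic abelian $D=\bigoplus_{i}\overset{p_{i}^{D}}{\bbc}$; this is precisely the hypothesis under which Lemmas~\ref{gluelemma} and~\ref{tensorlemma1} apply. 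I would then induct on $s+t$, the total number of free group factor summands appearing in $A$ and $B$. The base case $s=t=0$ is exactly Theorem~\ref{maintheorem}.

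For the inductive step, assume $s\geq 1$ and apply Lemma~\ref{L:removesummand} with $p=p_{s}$. Putting $\underline{A}=p_{s}D\oplus(1-p_{s})A$ and $\underline{\mcm}=\underline{A}*_{D}B$, the lemma gives $p_{s}\mcm p_{s}=p_{s}\underline{\mcm}p_{s}*_{p_{s}D}L(F_{r_{s}})$ together with the fact that the central support of $p_{s}$ is the same in $\mcm$ and in $\underline{\mcm}$. Here $\underline{A}$ has had one free summand replaced by the finite dimensional algebra $p_{s}D$, so $\underline{\mcm}$ is an amalgamated free product in which only $s-1+t$ free group factor summands occur and therefore lies in $\mcr_{2}$, with its free dimension known, by the inductive hypothesis; consequently $p_{s}\underline{\mcm}p_{s}\in\mcr_{2}$ as well. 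The equality of central supports lets me reassemble the corners $p_{s}\mcm p_{s}$ and $(1-p_{s})\mcm(1-p_{s})$, together with the off-diagonal matrix units joining them, into a global description of $\mcm$, so the whole problem is reduced to understanding the single corner free product $p_{s}\underline{\mcm}p_{s}*_{p_{s}D}L(F_{r_{s}})$.

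The main obstacle is exactly this corner: it is an amalgamated free product one of whose factors is the non-hyperfinite factor $L(F_{r_{s}})$, so neither Theorem~\ref{maintheorem} nor Theorem~\ref{T:multmatrix} applies directly, and a naive finite dimensional approximation of $L(F_{r_{s}})$ is impossible because it is not hyperfinite. To overcome this I would view free-producting with $L(F_{r_{s}})$ over the finite dimensional $p_{s}D$ as the operation of adjoining a free family of generators to $p_{s}\underline{\mcm}p_{s}$, and realize that operation through the gluing and tensor calculus: Lemma~\ref{gluelemma} accounts for the $L(\bbz)$ factors produced when matrix blocks are amalgamated, Lemma~\ref{tensorlemma1} accounts for the diffuse, tensorial hyperfinite pieces, and at each finite stage the underlying computation is the multimatrix amalgamated free product controlled by Theorem~\ref{T:multmatrix}; passing to the limit along the standard embeddings of property~(4) above keeps the corner in $\mcr_{2}$ and forces the free dimension to add. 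The bookkeeping then propagates: free dimension is additive in the multimatrix base case and transforms predictably under compression and under the corner formula of Lemma~\ref{L:removesummand}, which yields $\fdim(\mcm)=\fdim(A)+\fdim(B)-\fdim(D)$ after reassembly. Finally, for $\mcr_{1}$ I would note that starting from its restricted list of summand types, every diffuse hyperfinite algebra produced is either type I of the form $M_{n}\otimes L^{\infty}([0,1])$, as in Theorem~\ref{T:multmatrix}, or a hyperfinite $\mathrm{II}_{1}$ factor, and that only finitely many summands arise because $D$, $A$ and $B$ have finitely many; hence $\mcm$ in fact lies in $\mcr_{1}$.
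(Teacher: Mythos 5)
Your outer skeleton is sound, and in fact it is the same skeleton the paper itself uses for the $\mcr_{3}$/$\mcr_{4}$ analogue in Section 4 (note the paper does not reprove the statement in question; it quotes it from \cite{kenLMS} and \cite{minearxiv}): abelianize $D$ by compressing with a sum of minimal projections of full central support, induct on the number of interpolated free group factor summands, and peel one off with Lemma \ref{L:removesummand}. The gap is in the step you yourself flag as "the main obstacle": the corner $p_{s}\underline{\mcm}p_{s}*_{p_{s}D}L(F_{r_{s}})$. None of the tools you invoke can compute it. Lemmas \ref{gluelemma} and \ref{tensorlemma1} describe how an amalgamated free product changes under a \emph{simple step} in a multimatrix approximation of a summand of $A$ or $B$; they presuppose an increasing, weakly dense chain of multimatrix subalgebras of the summand being modified, which is exactly what $L(F_{r_{s}})$ does not possess. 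Theorem \ref{T:multmatrix} needs both factors multimatrix, and "passing to the limit along standard embeddings" presupposes that you have exhibited a chain of subalgebras with standard inclusions exhausting the corner --- no such chain is produced, and none can be obtained by approximating $L(F_{r_{s}})$ from inside. (Nor can you route around this via Lemma \ref{tensorlemma1} by writing $L(F_{r_{s}})=M_{n}\otimes F'$: that requires $p_{s}D\subseteq M_{n}\otimes\bbc$ with the expectation factoring, which fails whenever the minimal projections of $p_{s}D$ have irrational relative traces, and even when it applies it leaves you with a plain free product $p\mcn p * F'$ that Theorem \ref{maintheorem} does not cover, since one factor is not hyperfinite.)

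The missing idea is the semicircular realization of the free group factor summand: write $L(F_{r_{s}})=vN(R\cup\{q_{t}X_{t}q_{t}\}_{t})$ with $R$ a hyperfinite II$_{1}$ factor containing $p_{s}D$ and $\{X_{t}\}$ a semicircular family free from $R$. Then the corner is generated by $p_{s}\underline{\mcm}p_{s}*_{p_{s}D}R$ --- which \emph{is} covered by Theorem \ref{maintheorem} --- together with the free elements $q_{t}X_{t}q_{t}$. But to conclude that adjoining them produces an interpolated free group factor with the right free dimension, one needs to control not just the isomorphism class of the hyperfinite product but \emph{how it is generated} (hyperfinite part plus cut-down semicirculars), so that the new generators can be absorbed into an enlarged semicircular family. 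This is precisely the content of the generation statements the paper proves later, Lemmas \ref{L:sillyRlemma} through \ref{L:sillyLFRlemma} (in semifinite generality), and of the corresponding arguments in \cite{kenLMS} and \cite{minearxiv}; without something playing their role, your induction step does not close, and the free dimension bookkeeping for the corner is asserted rather than proved.
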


\section{Semifinite Hyperfinite von Neumann Algebras}
\subsection{Regulated Dimension}
Unfortunately free dimension is only defined on finite von Neumann algebras, so here we adapt it in a way that generalizes to semifinite algebras.

For algebras in $\mcr_{3}$, where free dimension is defined, we can define the \emph{regulated dimension} in a fairly straight forward way, as follows:
\[
\fdimC(A)=\tau(I_{A})^{2}(\fdim(A)-1).
\]
This means that for an interpolated free group factor $L(F_{t})$ endowed with a trace normalized to $\lambda$, the regulated dimension is $\lambda^{2}(t-1)$, which is the contribution the interpolated free group factor summand gives in the free dimension formula.  For a matrix algebra whose minimal projections have trace $t$, the regulated dimension is $-t^{2}$.  Finally for any diffuse hyperfinite algebra, the regulated dimension is zero.  Unlike the free dimension, the regulated dimension is additive over direct sums (where the trace is preserved in the embeddings), and so the regulated dimension of any algebra in $\mcr_{3}$ can easily be determined from the above.

\begin{lemma}\label{L:rdimP}  Let $A$ be an algebra in $\mcr_{3}$ and let $p\in A$ be a projection in $A$ with full central support.  Then $\rdim(A)=\rdim(pAp)$.
\end{lemma}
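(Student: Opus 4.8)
The plan is to reduce the statement to the three elementary building blocks of $\mcr_{3}$ by exploiting the additivity of $\rdim$ over direct sums, and then to verify the equality block-by-block, the only substantial case being the interpolated free group factors. First I would decompose $A$ along the direct-sum format that underlies the definition of $\fdim$, namely $A=H\oplus\bigoplus_{i}L(F_{r_{i}})\oplus\bigoplus_{j}M_{n_{j}}$ with $H$ diffuse hyperfinite, and index these summands as $A_{k}$ (treating the whole diffuse hyperfinite part $H$ as one block) with central projections $q_{k}$. Since $p$ has full central support, $p_{k}:=pq_{k}$ is a nonzero projection with full central support in $A_{k}$; because the $q_{k}$ are central we get $pAp=\bigoplus_{k}p_{k}A_{k}p_{k}$, with each corner again diffuse hyperfinite, a matrix algebra, or an interpolated free group factor, so that $pAp\in\mcr_{3}$ and $\rdim(pAp)$ is in fact defined. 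Invoking additivity of $\rdim$ over trace-preserving direct sums, it then suffices to prove $\rdim(A_{k})=\rdim(p_{k}A_{k}p_{k})$ for each $k$.

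Two of the three cases are immediate. If $A_{k}$ is diffuse hyperfinite, then $p_{k}A_{k}p_{k}$ is again diffuse hyperfinite: it is hyperfinite since compression preserves hyperfiniteness, and it is diffuse because a minimal projection of the corner would also be minimal in $A_{k}$, contradicting diffuseness; hence both regulated dimensions vanish. If $A_{k}=M_{n_{k}}$, then $p_{k}A_{k}p_{k}$ is a matrix algebra whose minimal projections carry exactly the same (unnormalized) trace $t$ as those of $A_{k}$, and since $\rdim$ of a matrix algebra equals $-t^{2}$ and depends only on $t$, the two sides agree.

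The crux is the free group factor case $A_{k}=L(F_{r})$. Here I would apply the interpolation (compression) formula for interpolated free group factors: writing $\tau(q_{k})=\lambda$ and $\tau(p_{k})=\mu$, the projection $p_{k}$ has normalized trace $\mu/\lambda$ inside the factor, so $p_{k}A_{k}p_{k}\cong L(F_{r'})$ with $r'-1=(\mu/\lambda)^{-2}(r-1)$. Substituting this into $\rdim(p_{k}A_{k}p_{k})=\tau(p_{k})^{2}(r'-1)=\mu^{2}(r'-1)$ yields $\mu^{2}(\lambda/\mu)^{2}(r-1)=\lambda^{2}(r-1)=\rdim(A_{k})$, which is the desired equality.

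The step I expect to require the most care is precisely this final cancellation: the weight $\tau(I)^{2}$ built into the definition of the regulated dimension is exactly what absorbs the $\mu^{-2}$ produced by the interpolation formula, and this is the structural reason $\rdim$, unlike $\fdim$, is invariant under compression by a projection. I would be careful to keep the two levels of trace normalization straight (the ambient state on $A$ versus the normalized trace on each corner used to evaluate $\fdim$), to note that the full-central-support hypothesis is exactly what prevents any summand from being annihilated under compression, and to observe that the block-wise equalities sum (over the at most countably many indices $k$, interpreted as extended reals when the sum diverges) to give $\rdim(A)=\rdim(pAp)$.
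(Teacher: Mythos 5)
Your proof is correct and follows essentially the same route as the paper's: reduce to the direct summands via additivity of $\rdim$ and the full-central-support hypothesis, then check the three cases, with the interpolated free group factor case handled by the same compression formula $pL(F_{r})p\cong L(F_{1+(r-1)/(\tau(p)/\tau(I))^{2}})$ and the cancellation of the trace factors. The only difference is that you spell out details the paper leaves implicit (that $pAp\in\mcr_{3}$, the block decomposition $pAp=\bigoplus_{k}p_{k}A_{k}p_{k}$, and the extended-real interpretation of the sums), which is fine.
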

\begin{proof}  First consider an interpolated free group factor $A=L(F_{t})$, whose identity has trace $\lambda$.  Then $\rdim(A)=\lambda^{2}(t-1)$.  Let $\gamma=\tau(p)$, then $pAp$ is isomorphic to $L(F_{1+(t-1)/(\gamma/\lambda)^{2}})$.  Thus $\rdim(pAp)=\gamma^{2}(1+(t-1)/(\gamma/\lambda)^{2}-1)=\lambda^{2}(t-1)$, as desired.

For a matrix algebra, we see that compression does not change the size of the minimal projection (note it is important that $p\in A$), and thus does not change the regulated dimension.  For diffuse hyperfinite algebras the regulated dimension remains zero.

Since the regulated dimension is additive over direct sums, and since $p$ has full central support (thus all factor summands in $A$ are represented in $pAp$), the result follows.
\end{proof}

Clearly this also means that the regulated dimension is invariant over amplification.

Unfortunately unlike the free dimension, the regulated dimension can be negative and does not match the number of generators in an free group factor.  It also may not be well defined in the same way as the free dimension if the interpolated free group factors are isomorphic.

Importantly if $\fdim(A*_{D}B)=\fdim(A)+\fdim(B)-\fdim(D)$ then $\rdim(A*_{D}B)=\rdim(A)+\rdim(B)-\rdim(D)$.


We can use the fact that this is invariant under amplification to extend the definition to $\mcr_{4}$.  Since for any $n$, $\rdim(L(F_{s})\otimes \underset{t}{M_{n}})=t^{2}(s-1)$ it is natural define $\rdim(L(F_{s})\otimes \underset{t}{B(\mch)})=t^{2}(s-1)$.  Note that $\underset{t}{B(\mch)}\otimes L(F_{s})\cong \underset{t'}{B(\mch)}\otimes L(F_{1+t^{2}(s-1)/t'^{2}})$ with a trace preserving isomorphism, however in both cases $\fdimC$ is $t^{2}(s-1)$.  While it is known that all factors of the form $B(\mch)\otimes L(F_{s})$ are isomorphic, there is only a trace preserving isomorphism if the interpolated free group factors are isomorphic.

Similarly for $\underset{t}{B(\mch)}$ we naturally define the regulated dimension to be $-t^{2}$, as it is for any $n$ with $\underset{t}{M_{n}}$.  Of course for any diffuse semifinite hyperfinite algebra we set the regulated dimension to zero.

Using the above and the fact that the regulated dimension is additive over direct sums, we can then define it for $\mcr_{4}$, except when this gives us $\infty-\infty$.

\begin{defn}  Let $A$ be a semifinite von Neumann algebra in $\mcr_{4}$, written as:
\[
A=H\oplus \bigoplus_{i\in I}\overset{p_{i}}{L(F_{s_{i}})}\oplus\bigoplus_{j\in J} (L(F_{s_{j}})\otimes \underset{t_{j}}{B(\mch)})\oplus\bigoplus_{k\in K} \underset{t_{k}}{M_{n_{k}}}\oplus\bigoplus_{\ell\in L}\underset{t_{\ell}}{B(\mch)},
\]
where $H$ is a diffuse hyperfinite algebra.  Then
\[
\rdim(A)=\sum_{i\in I}\tau(p_{i})^{2}(s_{i}-1)+\sum_{j\in J}t_{j}^{2}(s_{j}-1)-\sum_{k\in K}t_{k}^{2}-\sum_{\ell\in L}t_{\ell}^{2},
\] 
if this is defined.
\end{defn}
\begin{notn}To simplify things we use $\mcf_{s}^{t}$ to denote $L(F_{1+s/t^{2}})$ for $0<t<\infty$ equipped with a trace so that $\tau(I_{\mcf_{s}^{t}})=t$ and $\underset{1}{B(\mch)}\otimes L(F_{1+s})$ if $t=\infty$.  We will refer to these in general as semifinite interpolated free group factors.
\end{notn}
Using this we can rewrite the definition of regulated dimension in a simpler way.  For $A=\bigoplus_{i\in I} \mcf_{s_{i}}^{t_{i}}\oplus\bigoplus_{j\in J}\underset{t_{j}}{M_{n_{j}}}$, where $n_{j}$ may be infinity, then $\rdim(A)=\sum_{i\in I}s_{i}-\sum_{j\in J}t_{j}^{2}$.

\begin{eg}  Consider the algebra in $\mcr_{4}$ as follows $\bigoplus\limits_{i=1}^{\infty}\left(\mcf_{1}^{1}\oplus \underset{1}{\bbc}\right)$.  If we compute the regulated dimension we see that $\rdim(A)=\infty-\infty$ and thus it is not well defined.
\end{eg}

\subsection{Substandard Embeddings}
When working with semifinite algebras it makes more sense to extend the concept of the standard embedding, which we do in the following way:

\begin{defn}  We say that a trace preserving embedding $\phi:\mcf_{s}^{t}\to\mcf_{s'}^{t'}$ is \emph{substandard} if for some projection $p\in \mcf_{s}^{t}$ with finite (non-zero) trace, the restriction $\phi|_{p\mcf_{s}^{t}p}:p\mcf_{s}^{t}p\to \phi(p)\mcf_{s'}^{t'}\phi(p)$ is a standard embedding or an isomorphism.
\end{defn}

Note it is clearly equivalent to say this is true for any finite trace (non-zero) projection.  Also note that substandard embeddings give us increasing regulated dimension, but not necessarily increasing free dimension.

The first three properties of the standard embedding listed before follow directly for the substandard embedding.  The final is as follows:

\begin{propn}\label{L:substdsemifinite}   For $\{\mcf_{s_{i}}^{t_{i}}\}_{i=1}^{\infty}$ and substandard embeddings $\phi_{i}:\mcf_{s_{i}}^{t_{i}}\to \mcf_{s_{i+1}}^{t_{i+1}}$ then the inductive limit is $\mcf_{s}^{t}$ where $s=\lim_{i\to\infty}s_{i}$ and $t=\lim_{i\to\infty}t_{i}$, and the induced embedding of $\mcf_{s_{i}}^{t_{i}}\to\mcf_{s}^{t}$ is substandard.
\end{propn}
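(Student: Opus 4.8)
The plan is to reduce everything to a finite corner and then invoke the known inductive-limit behaviour of genuine standard embeddings. Fix a projection $p_1\in\mcf_{s_1}^{t_1}$ with $\tau(p_1)=c$ finite and nonzero, write $\Phi_i=\phi_i\circ\cdots\circ\phi_1$, and let $\psi_i\colon\mcf_{s_i}^{t_i}\to\mcm$ denote the canonical maps into the inductive limit $\mcm=\varinjlim\mcf_{s_i}^{t_i}$. Set $q=\psi_1(p_1)$ and $q_i=\Phi_{i-1}(p_1)$, so that $q=\psi_i(q_i)$ for every $i$. Since a finite-trace compression of a semifinite interpolated free group factor is again an interpolated free group factor of the same regulated dimension (the compression behaviour underlying Lemma \ref{L:rdimP}, valid uniformly whether $t_i$ is finite or infinite), one gets $q_i\mcf_{s_i}^{t_i}q_i=\mcf_{s_i}^{c}=L(F_{1+s_i/c^2})$ with trace $c$. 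Each connecting map $\phi_i|_{q_i\mcf_{s_i}^{t_i}q_i}\colon q_i\mcf_{s_i}^{t_i}q_i\to q_{i+1}\mcf_{s_{i+1}}^{t_{i+1}}q_{i+1}$ is a standard embedding or an isomorphism: by definition $\phi_i$ is standard on some finite-trace corner, and the remark following the definition of substandard (together with invariance of standardness under compression) extends this to the corner cut by $q_i$. Hence $q\mcm q=\varinjlim q_i\mcf_{s_i}^{t_i}q_i$ is an inductive limit of standard embeddings of finite free group factors, and the inductive-limit property of standard embeddings (Dykema, \cite{kenDuke}) identifies it as $L(F_{1+s/c^2})=\mcf_s^{c}$, with each induced map $q_i\mcf_{s_i}^{t_i}q_i\to q\mcm q$ again standard.

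Next I would show that $\mcm$ is a factor and compute $\tau(I_\mcm)$. Writing $e_i=\psi_i(I_{\mcf_{s_i}^{t_i}})$, one has $e_i\uparrow I_\mcm$ and $\tau(I_\mcm)=\lim_i t_i=t$. Inside the subfactor $\psi_i(\mcf_{s_i}^{t_i})$ the finite-trace projection $q$ has full central support, so its unit $e_i$ lies in the two-sided ideal of $\mcm$ generated by $q$, whence $e_i$ is dominated by the central support of $q$ in $\mcm$. Letting $i\to\infty$ shows $q$ has full central support in $\mcm$. Since $q\mcm q$ is a factor, $Z(\mcm)\cong Z(q\mcm q)=\bbc$, so $\mcm$ is a factor.

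It remains to identify $\mcm$. If $t<\infty$ then $\mcm$ is a II$_1$ factor with $\tau(I_\mcm)=t$ whose trace-$c$ corner is $L(F_{1+s/c^2})$; the compression formula for free group factors then forces $\mcm=L(F_{1+s/t^2})=\mcf_s^{t}$. If $t=\infty$ then $\mcm$ is a properly infinite $\sigma$-finite factor carrying the finite-trace corner $q\mcm q=L(F_{1+s/c^2})$ of full central support, so $\mcm\cong q\mcm q\otimes B(\mch)=L(F_{1+s/c^2})\otimes B(\mch)$ as a trace-scaled amplification. Because the regulated dimension is invariant under amplification and $\rdim(q\mcm q)=s$, the amplification $\mcm$ must be the semifinite interpolated free group factor of regulated dimension $s$, namely $\mcf_s^{\infty}=B(\mch)\otimes L(F_{1+s})$.

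Finally, for the induced embedding $\psi_i\colon\mcf_{s_i}^{t_i}\to\mcf_s^{t}=\mcm$ I would apply the corner argument to the tail $\{\mcf_{s_k}^{t_k}\}_{k\geq i}$, whose inductive limit is again $\mcm$. For a finite-trace projection $p\in\mcf_{s_i}^{t_i}$ the restriction $\psi_i|_{p\mcf_{s_i}^{t_i}p}\colon p\mcf_{s_i}^{t_i}p\to\psi_i(p)\mcm\psi_i(p)$ is the induced map of an inductive system of standard embeddings, hence standard by the computation of the first paragraph; this is precisely the assertion that $\psi_i$ is substandard. The main obstacle I anticipate is the bookkeeping in the $t=\infty$ case: making the identification $\mcm\cong q\mcm q\otimes B(\mch)$ trace-preserving and matching normalizations so that the amplification is the correct $\mcf_s^{\infty}$ rather than merely abstractly isomorphic to it, which is exactly where amplification-invariance of the regulated dimension does the essential work. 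A secondary point requiring care is verifying that the compressed connecting maps are genuinely standard (and not merely substandard), which rests on invariance of standardness under compression.
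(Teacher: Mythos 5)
Your proposal is correct, and it shares the paper's opening move --- compress to a finite-trace corner, where by the definition of substandardness (together with compression-invariance of standardness) the connecting maps become standard embeddings or isomorphisms --- but the two arguments then diverge in how they identify the limit. The paper re-opens the definition of a standard embedding: it chooses compatible semicircular-system generators, so that each corner $\rho_{i-1}(p)\mcf_{s_{i}}^{t_{i}}\rho_{i-1}(p)$ is generated by a hyperfinite factor $R_{i}$ and cut-down semicirculars $S_{i}$ with $\phi_{i}(R_{i})=R_{i+1}$ and $\phi_{i}(S_{i})\subseteq S_{i+1}$; since each $\mcf_{s_{i}}^{t_{i}}$ is an amplification of its corner, the limit is read off as the algebra generated by $R\cup S$, which identifies it as $\mcf_{s}^{t}$ (II$_{1}$ or II$_{\infty}$ hyperfinite part according to $\lim t_{i}$) and exhibits the induced embeddings as substandard in one stroke. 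You instead keep standard embeddings as a black box, use Dykema's inductive-limit property only on the corner to get $q\mcm q\cong L(F_{1+s/c^{2}})$, and then recover $\mcm$ abstractly: your central-support argument for factoriality is valid, and the compression/amplification formulas with the $\fdimC$ bookkeeping correctly pin down $\mcm$ as $\mcf_{s}^{t}$ in both the $t<\infty$ and $t=\infty$ cases. Your route avoids all semicircular bookkeeping and rests on general facts about factors, at the price of proving factoriality separately and checking that the identification $\mcm\cong q\mcm q\otimes B(\mch)$ is trace-preserving with the right normalization; the paper's route gets factoriality, the type of the limit, and substandardness of the induced maps for free from the generating sets. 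Two caveats you share with the paper: both proofs quietly allow some compressed connecting maps to be isomorphisms rather than strictly standard, which needs the inductive-limit property in mildly extended form; and your final paragraph relies on the fact that the induced embedding into an inductive limit of standard embeddings is itself standard, which is in \cite{kenDuke} but is not part of the paper's listed property (4) --- the paper obtains it instead from its explicit generators.
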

\begin{proof}
Let $\rho_{i}=\phi_{i}\circ\phi_{i-1}\circ\dots\circ\phi_{1}$.  Choose a projection $p\in \mcf_{s_{1}}^{t_{1}}$ with finite non-zero trace $t'$.

 Then $\rho_{i}|_{p}$ considered as map from $p\mcf_{s_{1}}^{t_{1}}p\to\rho_{i}(p)\mcf_{s_{i+1}}^{t_{i+1}}\rho_{i}(p)$ is standard or an isomorphism.  Furthermore we know $\rho_{i}(p)\mcf_{s_{i+1}}^{t_{i+1}}\rho_{i}(p)\cong\mcf_{s_{i+1}}^{t'}$.   Then we see each $\rho_{i-1}(p)\mcf_{s_{i}}^{t_{i}}\rho_{i-1}(p)$ is generated by $R_{i}$ and $S_{i}$, where $R_{i}$ is a copy of the hyperfinite II$_{1}$ factor, and $S_{i}=\{q_{j}X_{j}q_{j}\}_{j\in J_{i}}$ with $\sum_{j\in J_{i}}\tau(q_{j})^{2}=s_{i}$, where the $X_{j}$ are a semicircular system and $q_{j}\in R_{i}$, and so that $\phi_{i}(R_{i})=R_{i+1}$ and $\phi_{i}(S_{i})\subseteq S_{i+1}$.  Thus $\rho_{i}(p)\mcf_{s_{i+1}}^{t_{i+1}}\rho_{i}(p)\cong\mcf_{s_{i+1}}^{t'}$.

  Then for each $i$, $\mcf_{s_{i}}^{t_{i}}$ is generated by $S_{i}$ and an amplification of $R_{i}$.  The inductive limit is then generated by $S=\cup_{i=1}^{\infty}S_{i}$ and $R$, a hyperfinite II$_{1}$ (if $\lim_{i}t_{i}<\infty$) or II$_{\infty}$ (if $\lim_{i}t_{i}=\infty$) factor.  If we let $J=\cup_{i}J_{i}$ then we see $\sum_{j\in J}\tau(q_{j})^{2}=\lim_{i} s_{i}$  Thus the inductive limit is $\mcf_{s}^{t}$ as desired.



\end{proof}

Finally, for convenience of notation we will further extend the definition of substandard in the following way:

\begin{defn}  For von Neumann algebras $A$ and $B$ in $\mcr_{4}$, an embedding $\phi:A\to B$ is said to be substandard if for every semifinite interpolated free group factor summand of $A$,   $\mcf_{s}^{t}$ with central support $p$, $\phi:\mcf_{s}^{t}\to \phi(p)B\phi(p)$ is substandard.
\end{defn}

\subsection{Main Lemmas}

The following lemma, based on Lemma 5.2 in \cite{kenAmJM} will be used for almost all of our results.

\begin{lemma}\label{L:abelianD}  Let $\mcr$ be a class of von Neumann algebras such that if $A\in\mcr$ and $p\in A$ then $pAp\in \mcr$.  Let $A$ and $B$ be von Neumann algebras in $\mcr$ with atomic type I subalgebra $D$.  If for all $A'$ and $B'$ in $\mcr$ with abelian multimatrix subalgebra $D'$,  $A'*_{D'}B'$ is in $\mcr_{4}$, then so is $A*_{D}B$.  Furthermore conditions on the number of $\mcf_{s}^{t}$ factor summands and type I atomic factor summands are preserved, and if we know $\fdimC(A'*_{D'}B')=\fdimC(A')+\fdimC(B')-\fdimC(D')$ then the same holds for $A$, $B$, and $D$.  Also if $A$ and $B$ are finite, then $A*_{D}B$ is too, and we can then make the same statement about $\mcr_{3}$
\end{lemma}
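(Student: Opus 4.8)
The plan is to reduce amalgamation over the atomic type I algebra $D$ to amalgamation over an abelian subalgebra, and then to recover the full free product by amplification. Write $D=\bigoplus_{i}D_{i}$ with each $D_{i}$ a type I factor, choose a minimal projection $e_{i}\in D_{i}$, and set $p=\sum_{i}e_{i}$. Since distinct $D_{i}$ have orthogonal central supports in $D$, we get $pDp=\bigoplus_{i}e_{i}D_{i}e_{i}\cong\bigoplus_{i}\bbc$, an abelian multimatrix algebra, and by the compression hypothesis on $\mcr$ both $pAp$ and $pBp$ lie in $\mcr$. The first thing I would record is that $p$ has full central support in $A$, in $B$, and in $\mcm:=A*_{D}B$: each matrix unit $f^{i}_{jj}\in D_{i}$ is equivalent, via a partial isometry in $D$, to $e_{i}\le p$, and $\sum_{i,j}f^{i}_{jj}=1$, so any central projection dominating $p$ is $1$; the implementing partial isometries lie in $D\subseteq A,B,\mcm$, so the computation is valid in each.

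The key step is the compression identity
\[
p\mcm p=pAp*_{pDp}pBp,
\]
which is the extension of Dykema's Lemma 5.2 of \cite{kenAmJM} (and is in the spirit of Lemma~\ref{L:removesummand}) from finite-dimensional $D$ to atomic type I $D$. Granting this, the hypothesis applies to $pAp$ and $pBp$ over the abelian algebra $pDp$ and gives $p\mcm p\in\mcr_{4}$.

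To pass back from $p\mcm p$ to $\mcm$ I would use the full central support of $p$: the matrix units of $D$ present $\mcm$, summand by summand, as an amplification of $p\mcm p$. Amplification sends each summand to one of the same species, namely $\mcf_{s}^{t}\mapsto\mcf_{s}^{t'}$, $\underset{t}{M_{n}}\mapsto\underset{t'}{M_{m}}$ or $\underset{t'}{B(\mch)}$, and diffuse semifinite hyperfinite to diffuse semifinite hyperfinite, so $\mcm\in\mcr_{4}$ and the numbers of $\mcf_{s}^{t}$ summands and of type I atomic factor summands are preserved, being in bijection (through the center) with those of $p\mcm p$. For the regulated dimension, Lemma~\ref{L:rdimP} and its invariance under amplification (noted just after that lemma) give $\rdim(\mcm)=\rdim(p\mcm p)$, $\rdim(A)=\rdim(pAp)$, $\rdim(B)=\rdim(pBp)$, and $\rdim(D)=\rdim(pDp)$; combining these with the abelian-case identity $\rdim(p\mcm p)=\rdim(pAp)+\rdim(pBp)-\rdim(pDp)$ yields $\rdim(\mcm)=\rdim(A)+\rdim(B)-\rdim(D)$ whenever the right-hand side is not of the form $\infty-\infty$. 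Finally, if $A$ and $B$ are finite then the trace $\tau_{D}\circ E_{D}$ on $\mcm$ is finite, the amplifications are ordinary matrix amplifications, and the identical argument with the finite hypothesis places $\mcm$ in $\mcr_{3}$.

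The main obstacle is the compression identity in the atomic rather than finite-dimensional setting: when $D$ has infinitely many summands or infinite type I factor summands one must verify that a corner of the amalgamated free product is again the amalgamated free product of the corresponding corners. This requires the semifinite amalgamated free product theory of \cite{ueda} and careful bookkeeping of central supports; once it is available, the remaining steps are routine manipulations with amplification and the additivity of $\rdim$.
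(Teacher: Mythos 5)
Your strategy coincides with the paper's own proof: compress by $p=\sum_{i}e_{i}$ (one minimal projection from each type I factor summand of $D$), invoke the hypothesis for $pAp$ and $pBp$ over the abelian algebra $pDp$, and then recover $\mcm=A*_{D}B$ from $p\mcm p$ through the matrix units of $D$, using the full central support of $p$ to preserve the number and species of the summands and to transfer the $\fdimC$ identity. The gap is that you never prove the one statement on which all of this rests, namely $p\mcm p=pAp*_{pDp}pBp$. You describe it as ``the extension of Dykema's Lemma 5.2 from finite-dimensional $D$ to atomic type I $D$'' and proceed by granting it; but that extension is precisely the content of the lemma you are proving (the paper itself introduces this lemma as the generalization of Dykema's Lemma 5.2 in \cite{kenAmJM}), so as written the argument assumes what it must establish. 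Your closing remark that the identity requires Ueda's semifinite free product theory and ``careful bookkeeping of central supports'' also misjudges where the work lies: Ueda's construction is invoked in the paper only to know that the semifinite amalgamated free product exists with semifinite trace, not to prove this compression identity.

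The identity has a short, elementary proof, and it is essentially the whole of the paper's proof. First, freeness passes to the corner: since $p\in D$, for $x\in pAp$ one has $E_{pDp}(x)=pE_{D}(x)p$, so elements of $pAp$ and $pBp$ that are centered for $E_{pDp}$ are centered for $E_{D}$; alternating words in such elements are alternating words in $A$ and $B$, hence have zero $E_{D}$-expectation, and compressing by $p$ gives zero $E_{pDp}$-expectation. Therefore $vN(pAp\cup pBp)\subseteq\mcm$, equipped with the restricted expectation, is a copy of $pAp*_{pDp}pBp$. Second, generation: write $p=\sum_{k}e^{(k)}_{1,1}$ and let $V=\{e^{(k)}_{1,j}:j>1\}$ be the remaining first-row matrix units of $D$. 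Every $a\in A$ is an SOT-convergent sum of terms $(e^{(k)}_{1,j})^{*}\bigl(e^{(k)}_{1,j}\,a\,(e^{(k')}_{1,j'})^{*}\bigr)e^{(k')}_{1,j'}$ whose middle factors lie in $pAp$, so $A=vN(pAp\cup V)$, and similarly for $B$ and $D$; hence $\mcm=vN\bigl(vN(pAp\cup pBp)\cup V\bigr)$. Finally, since $e^{(k)}_{1,j}p=0$ for $j>1$, compressing any word in $vN(pAp\cup pBp)\cup V\cup V^{*}$ by $p$ on both sides collapses it into $vN(pAp\cup pBp)$, so $p\mcm p=vN(pAp\cup pBp)=pAp*_{pDp}pBp$. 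With this argument supplied, your proposal becomes the paper's proof; without it, the central claim is only asserted.
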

\begin{proof}  Let $D=\bigoplus_{k\in I_{D}}\overset{p_{k}^{D}}{M_{n_{k}^{D}}}$ (where we allow $n_{k}=\infty$, in which case we mean $B(\mch)$).  For each $k\in I_{D}$, let $\{e_{i,j}^{(k)}\}$ be the standard basis for $M_{n_{k}^{D}}$.  Let $e=\sum_{k\in I_{D}}e_{1,1}^{(k)}$ (with the sum converging in the SOT).  Note that this is a projection in $D$ with central support $I$, and $eDe$ is abelian. Let $A'=eAe, B'=eBe,$ and $D'=eDe$.  Let $\mcm=A*_{D}B$ and let $\mcm'=vN(A'\cup B')$, and since $A'\subseteq A$, $B'\subseteq B$ we see that $A'$ and $B'$ are free with amalgamation over $D'$, and thus $\mcm'=A'*_{D'}B'$. Let $V=\{e_{1,j}^{(k)}|k\in I_{D},1<j\leq n_{k}^{D}\}$.  Note then that $A=vN(A'\cup V)$, $B=vN(B'\cup V)$, and $D=vN(D'\cup V)$.  Then $\mcm=vN(\mcm'\cup V)$. Thus $\mcm'=e\mcm e$.   By our assumption we know $A'$ and $B'$ are in $\mcr$ and $D'$ is an abelian multimatrix algebra.  Thus by assumption $\mcm'$ is of the correct form, which implies $\mcm$ is of the same form with the same number of factor summands of each kind (where the kinds are: diffuse hyperfinite, type I atomic, and $\mcf_{s}^{t}$ as this step may change interpolated free group factors to $\mcf_{t}^{\infty}$ or matrix algebras to $B(\mch)$).  Now if $\fdimC(A'*_{D'}B')=\fdimC(A')+\fdimC(B')-\fdimC(D')$, then since the central support of $e$ is the identity $\fdimC(A*_{D}B)=\fdimC(A)+\fdimC(B)-\fdimC(D)$.

\end{proof}

\begin{lemma}\label{L:infchainlemma}  Let $A$ be a semifinite hyperfinite von Neumann algebra with trace preserving conditional expectation onto a subalgebra $D=\bigoplus\limits_{n\in N}\underset{t_{n}}{\overset{p_{n}}{\bbc}}$, for some countable index set $N$, so that the trace is semifinite on $D$ as well.  Then there exists a chain of multimatrix subalgebras $A_{j}$, so that $D\subseteq A_{1}\subseteq A_{2}\dots$ and $\cup_{j=1}^{\infty}A_{j}$ is dense in $A$.
\end{lemma}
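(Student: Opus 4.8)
The plan is to cut $A$ down to finite corners, where genuine finite-dimensional approximation is available, and to glue these corners together while leaving a tail of $D$ untouched so that $D$ stays honestly inside every algebra in the chain. First observe that since $\tau|_D$ is semifinite and each $p_n$ is an atom of $D=\bigoplus_{n\in N}\mathbb{C}p_n$, each $p_n$ must have finite trace $t_n<\infty$ (an atom of infinite trace has no proper finite subprojection in $D$, contradicting semifiniteness). Put $q_m=p_1+\cdots+p_m$; these are finite-trace projections in $D$ increasing to $I=\sum_n p_n$, so the two-sided truncation argument gives $q_m a q_m\to a$ strongly for every $a\in A$, whence $\bigcup_m q_mAq_m$ is SOT-dense in $A$. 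Each $q_mAq_m$ is a \emph{finite} hyperfinite von Neumann algebra (finite since $\tau(q_m)<\infty$, hyperfinite since corners of hyperfinite algebras are hyperfinite) with separable predual, and it contains the finite-dimensional abelian algebra $\mathrm{span}\{p_1,\dots,p_m\}$.

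The key structural input is that a finite hyperfinite von Neumann algebra $M$ stays approximately finite dimensional \emph{relative to a fixed finite-dimensional subalgebra}: given finite-dimensional $F_0\subseteq M$, a finite set $S\subseteq M$ and $\epsilon>0$, there is a finite-dimensional $F_1$ with $F_0\subseteq F_1\subseteq M$ and $\|\cdot\|_2$-distance from each element of $S$ to $F_1$ below $\epsilon$. One gets this by decomposing $M$ along the minimal central projections of $F_0$ as $M_{k_i}\,\overline{\otimes}\,N_i$ with $N_i=e^{(i)}_{11}Me^{(i)}_{11}$ again finite hyperfinite, approximating the corner-components of $S$ inside each $N_i$ by ordinary AFD (no constraint there), and tensoring back up with the matrix units of $F_0$. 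This is the step carrying the real content; everything else is bookkeeping.

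With that tool I build the chain by a single induction with diagonal scheduling. By separability fix, for each $m$, a sequence $\{x_{m,\ell}\}_\ell$ that is $\|\cdot\|_2$-dense in $q_mAq_m$. At stage $j$ set $m_j=j$ (if $N$ is finite, stabilize $m_j$ and fall back to the classical finite-dimensional case), form the finite-dimensional algebra $F_0=B_{j-1}\oplus\mathbb{C}p_{m_j}\subseteq q_{m_j}Aq_{m_j}$, which contains $p_1,\dots,p_{m_j}$, and apply the relative-AFD step inside $q_{m_j}Aq_{m_j}$ to enlarge it to a finite-dimensional $B_j\supseteq F_0$ that approximates every $x_{m,\ell}$ with $m,\ell\le j$ to within $1/j$ in $\|\cdot\|_2$. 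Then set
\[
A_j=B_j\oplus\bigoplus_{n>m_j}\mathbb{C}p_n .
\]
Each $A_j$ is a multimatrix algebra containing $D$ (its first $m_j$ atoms lie in $B_j$, the remainder form the tail), it is unital with $1_{A_j}=I$, and $A_j\subseteq A_{j+1}$ because $B_j\subseteq B_{j+1}$ while the atoms $p_{m_j+1},\dots,p_{m_{j+1}}$ that leave the tail are absorbed into $B_{j+1}$.

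For density, let $P=\overline{\bigcup_j A_j}^{\,\mathrm{SOT}}$, a unital von Neumann algebra. For fixed $(m,\ell)$ one has $\|x_{m,\ell}-E_{B_j}(x_{m,\ell})\|_2\le 1/j\to0$ with the approximants $E_{B_j}(x_{m,\ell})$ uniformly bounded in operator norm, so $x_{m,\ell}\in P$. Since $q_m\in P$, the corner $q_mPq_m=P\cap q_mAq_m$ is $\|\cdot\|_2$-closed and contains the $\|\cdot\|_2$-dense set $\{x_{m,\ell}\}_\ell$, hence $q_mAq_m\subseteq P$ for every $m$; therefore $P\supseteq\overline{\bigcup_m q_mAq_m}^{\,\mathrm{SOT}}=A$. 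The main obstacle is the relative approximate-finite-dimensionality step; the finite-trace truncation by $q_m$ is exactly what lets finite-dimensional subalgebras do the approximating, while the untouched tail $\bigoplus_{n>m_j}\mathbb{C}p_n$ is what keeps $D$ inside every $A_j$.
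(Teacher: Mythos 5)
Your overall architecture is sound and in fact parallels the paper's: both arguments cut $A$ down to the finite-trace corners $q_mAq_m$ (finiteness of each $t_n$ following from semifiniteness of $\tau|_D$ exactly as you say), build compatible finite-dimensional approximations of those corners that contain the cut-down copy of $D$, and glue in the untouched tail $\bigoplus_{n>m_j}\mathbb{C}p_n$ so that $D$ lies in every term of the chain; your nesting and density bookkeeping are correct. The problem is the step you yourself flag as carrying the real content: the ``relative AFD'' lemma, and your justification of it is wrong. Decomposing $M$ along the minimal central projections $z_i$ of $F_0$ does \emph{not} give $M=\bigoplus_i M_{k_i}\,\overline{\otimes}\,N_i$: that identity describes only $\bigoplus_i z_iMz_i$, and the $z_i$ are central in $F_0$, not in $M$. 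The off-diagonal corners $z_iMz_j$, $i\neq j$, are invisible to your construction: all the algebras you produce lie inside $\bigoplus_i z_iMz_i$, which is a proper subalgebra of $M$ whenever some $z_i$ fails to be central in $M$. Concretely, take $M=R$ the hyperfinite II$_{1}$ factor and $F_0=\mathbb{C}p\oplus\mathbb{C}(1-p)$ with $\tau(p)=\frac12$: your recipe only yields subalgebras of $pRp\oplus(1-p)R(1-p)$ and can never approximate a partial isometry $v$ with $vv^*=p$, $v^*v=1-p$. In your application $F_0$ contains $p_1,\dots,p_{m_j}$, which are generally not central in $q_{m_j}Aq_{m_j}$, so this failure occurs in precisely the situation you need.

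The relative statement you want is nevertheless true, so the gap is reparable, but it needs a genuinely different argument. The paper's route is an explicit construction: it splits the finite corner into its type I and type II parts and builds, by hand, partitions and systems of matrix units arranged so that each $p_n$ is (up to equivalence) a sum of diagonal matrix units --- this is exactly how the connecting partial isometries between the corners $z_iMz_j$ get built into the approximating multimatrix algebras. A slicker repair of your step: take any AFD filtration $G_n\nearrow q_{m_j}Aq_{m_j}$, note that for large $n$ the matrix units of $F_0$ nearly lie in $G_n$ in $\|\cdot\|_2$, and invoke Christensen's perturbation theorem to get a unitary $u$ with $\|u-1\|_2$ small and $uF_0u^*\subseteq G_n$; then $F_1=u^*G_nu$ is finite dimensional, contains $F_0$, and approximates your finite set. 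Either repair slots into your scheduling argument unchanged; but as written, your proof of the key step fails.
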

\begin{proof}  
First we assume $N$ is finite a finite set and begin by dividing it into type I and type II parts and approximating them separately. If $A$ is type I and finite, and thus of the form $B\oplus(C\otimes L^{\infty}([0,1]))$, where $B$ and $C$ are multimatrix algebras. We can choose a representation so that every $p_{n}$, $n\in N$, is composed of diagonal matrices in $B$ and $C$ and characteristic functions in $L^{\infty}([0,1])$. Let $C=\oplus_{k\in K}C_{k}$ where each $C_{k}$ is a matrix algebra.  Then for each $k$ in $K$ we can choose a partition $P_{k}$ of $[0,1]$ into a countable number of measurable subsets so that for every $S\in P_{k}$ and $e_{j,j}^{(k)}\in C_{k}$,  $e_{j,j}^{(k)}\otimes\chi_{S}\leq p_{n}$ for some $n\in N$.  Then we set $A_{1}=B\oplus\bigoplus_{k\in K}\left(C_{k}\otimes L^{\infty}(P_{k})\right)$, and note $D\subseteq A_{1}$.  We can then define each $A_{j}$ by further refining $P_{k}$ to get the desired sequence.\\

Next we do the finite type II case (with finite $N$).  To make this easier to extend to the infinite $N$ semifinite case, we will not assume normalised trace, and consider it as a cut-down of a larger algebra.  We consider it as the cutdown of an algebra $A'=L^{\infty}(X,\mu)\otimes R$, with centre-valued trace $\tau_{Z}$ so that $\tau_{Z}(A')$ is the constant function on $X$ of value $2^{K}$ for some $K$.  If we let $p_{n'}=I_{A'}-I_{A}$, we can rephrase this as finding an approximating sequence for $A'$ with subalgebra $D'$ spanned by ${p_{n}}_{n\in N\cup\{n'\}}$, and then cut down the result by $I_{A}$.  So without loss of generality we will assume our algebra is $L^{\infty}(X,\mu)\otimes R$ (essentially we are back to working in the normalised trace case).

As before we pick subalgebras $M_{2^{k+K}}$ of $R$ so that $\cup_{k=-K}^{\infty}M_{2^{k+K}}$ is dense in $R$, and denote the standard basis elements of $M_{2^{k+K}}$ by $e_{i,j}^{(k)}$ and the trace of $e_{i,i}^{(k)}$ is $2^{-k}$, and use the inclusion $M_{2^{k+K}}\to M_{2^{k+1+K}}$ where $e_{i,j}^{(k)}\to e_{2i-1,2j-1}^{(k+1)}+e_{2i,2j}^{(k+1)}$.

Let $f_{n}\in L^{\infty}(X,\mu)$ be the centre-valued trace of $p_{n}$ in $A$, for all $n\in N$.  Then let $S_{n,k}=f^{-1}_{n}\left(\cup_{\ell=0}^{\infty})\left[\frac{1}{2^{k}},\frac{2}{2^{k}}\right)\right)$ for $k\geq -K$.  Then note $f_{n}=\sum_{k=-K}^{\infty}\frac{1}{2^{k}}\chi_{S_{n,k}}$.

Now define $P_{0,k}$ to be a countable partition of $X$ so that for every $n\in N$, each $S_{n,k}$ with non-zero measure is the union of sets in $P_{0,k}$, and so that $P_{0,k}$ refines $P_{0,k-1}$ (note since $N$ is finite and $k$ is bounded below, this can be accomplished with a finite set of intersections).  For each $S\in P_{0,k}$, let $s_{S,k}$ be the number of $n\in N$ such that $S\subseteq S_{n,k}$.  Inductively define $r_{S,k}=s_{S,k}+2r_{S',k-1}$ where $S\subseteq S'\in P_{0,k-1}$, starting with  $P_{0,-K}=\{X\}$ and $r_{X,-K}=0$.

Then define $A_{0}$ to be the multimatrix algebra spanned by the basis
\[
B_{0}=\{e_{i,j}^{(k)}\otimes \chi_{S}|k\geq-K, S\in P_{0,k},2r_{S',k-1}< i,j\leq r_{S,k},S\subseteq S'\in P_{0,k-1}\}.
\]
Now then we can partition the diagonal basis elements, $e_{i,i}^{(k)}\otimes\chi_{S}$, from $B_{0}$ into sets $\{F_{n}\}_{n\in N}$ by placing one element of the form $e_{i,i}^{(k)}\otimes \chi_{S}$ into each $F_{n}$ where $S\subseteq S_{n,k}$, which works because there are exactly $s_{S,k}$ such elements.

This means for any $x\in X$ (except possibly on a set of measure zero an $n\in N$, we see that:
\[
\sum\limits_{\substack{e_{i,i}^{(k)}\otimes \chi_{S}\in F_{n},\\x\in S}}\tau(e_{i,i}\otimes \chi_{S})=\sum_{k=-K}^{\infty}\frac{1}{2^{k}}\chi_{S_{n,k}}=f_{n}(x).
\]
Thus we have a set of orthogonal projections $\{p_{n}'\}_{n\in N}$ with centre-valued trace equal to $\{f_{n}\}_{n\in N}$.  These are then equivalent to the $p_{n}$ in $D$, and thus without loss of generality we can identify them with $D$.

To define $B_{m}$, we first define $P_{m,k}$ to be a refinement of $P_{m-1,k}$ such for any $S\in P_{m,k}$ either $0<\mu(S)\leq 2^{-m}$ or $S$ is a single atom.  We define $r_{S,k}$ and $s_{S,k}$ in the same way (note for $S\in P_{m,k}$ if there exists an $S'\in P_{m',k}$ with $S\subseteq S'$ and $m>m'$ then $r_{S,k}=r_{S',k}$ and $s_{S,k}=s_{S',k}$ so this is well defined, and we do not need to add an index $m$).

Then let $A_{m}$ be the multimatrix algebra spanned by
\begin{multline*}
B_{m}=\{e_{i,j}^{(k)}\otimes \chi_{S}|m< k, S\in P_{m,k},2r_{S',k-1}< i,j\leq r_{S,k},S\subseteq S'\in P_{m,k-1},\\
\text{or if } k=m,S\in P_{m,k},1\leq i,j\leq r_{S,m} \}.
\end{multline*}
Recalling the inclusion of $M_{2^{m-1}}$ into $M_{2^{m}}$ and that each $P_{m,k}$ is a refinement of $P_{m-1,k}$, we see $A_{m-1}\subseteq A_{m}$.

We have established that for any $x\in X$ (except possibly on a set of measure zero) as $m$ goes to infinity the sum of  $\{\tau(e_{i,i}^{(k)})=2^{-k}|e_{i,i}^{(k)}\otimes \chi_{S}\in B_{0},x\in S,k\leq m\}$ goes to $2^{K}$.  Since for $A_{m}$ there is a set $S$ containing $x$ with measure less than $\max\{\mu(\{x\}),2^{-m}\}$ and matrix subalgebra $M_{m'}\otimes \chi_{S}\subseteq M_{2^{m}}\otimes \chi_{S}$, whose trace is $\mu(S)$ times that sum.  Thus this is dense in $\cup_{m} M_{2^{m}}\otimes L^{\infty}(X,\mu)$ and thus dense in $A$\\

If $N$ is only countable (and thus $A$ may be only semifinite), then let $q_{n}$ be the sum of the first $n$ $p_{j}$ in $D$.  Then use the finite case to construct an approximating chain $A_{i,1}$ of $q_{1}Aq_{1}$ which contains $q_{1}Dq_{1}$.  We claim we can use the finite case again to let $A_{i,n}$ be an approximating chain of $q_{n}Aq_{n}$ containing $q_{n}Dq_{n}$ and so that $A_{i,n}\subseteq A_{i,n+1}$ for all $i$ and for $n\geq 1$. 

For the type I part, the above construction naturally allows us to do this.  For the type II case we have to make some minor modifications to make this work.

For the type II construction we only note that in going from $q_{n}Aq_{n}$ to $q_{n+1}Aq_{n+1}$ we can consider it now as a cutdown of some $L^{\infty}(X',\mu')\otimes R\otimes M_{2^{K'}}$ where $X'$ contains $X$, and $\mu'$ restricts to $\mu$ on $X$.  We can then write this as $(L^{\infty}(X,\mu)\otimes R\otimes M_{2^{K'}})\oplus (L^{\infty}(X'\backslash X,\mu')\otimes R\otimes M_{2^{K'}}) $, and approximate the second direct summand separately.  The tensor with $M_{2^{K'}}$ only changes the value $K$, which does not affect the $B_{m}$ for $m>-K$.  We are also adding another projection $p_{n+1}$ (or two if we need a new  $p_{(n+1)'}$) , which creates a new partition $S_{n,K}$, but this merely further refines $P_{m,k}$, meaning with this construction each $B_{m}'$ will contain our original $B_{m}$, thus completing the proving the claim. 

Then define our approximating chain to be $A_{i}=A_{i,i}\oplus (1-q_{i})D(1-q_{i})$, completing the proof.

\end{proof}

\begin{defn}  We call an embedding $\phi:\mcn\to\mcm$ a \emph{simple step} if it follows one of the two following patterns:
\begin{enumerate}
\item $\mcn=\overset{p}{A}\oplus \overset{q}{B}$, $\mcm=\left(\bigoplus\limits_{i=1}^{n}\overset{p_{i}}{A}\right)\oplus \overset{q}{B}$, with $p=\sum_{i=1}^{n}p_{i}$, and $\phi(a,b)=(a,\dots,a,b)$.  We call this a \emph{simple step of the first kind}.
\item $\mcn=\underset{t}{M_{n}}\oplus \underset{t}{M_{m}}\oplus \overset{p}{B}$, $\mcm=\underset{t}{M_{n+m}}\oplus\overset{p}{B}$ where $\phi(x,y,b)=\left(\left[\begin{matrix} x & 0 \\ 0 & y\\\end{matrix}\right],b\right)$.  We call this a \emph{simple step of the second kind}.
\end{enumerate}
\end{defn}

\begin{lemma}\label{L:decomp2}  For two multimatrix algebras $\mcn$ and $\mcm$ and a trace preserving inclusion $\phi:\mcn\to\mcm$, then $\phi$ can be written as a composition of a (possibly countably infinite) sequence of simple steps.
\end{lemma}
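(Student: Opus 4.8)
The plan is to reduce $\phi$ to its combinatorial data and then realize that data as an explicit composition of the two elementary moves. Since $\mcn$ and $\mcm$ are multimatrix algebras, write $\mcn=\bigoplus_{a}M_{n_{a}}$ and $\mcm=\bigoplus_{b}M_{m_{b}}$ over countable index sets, where the matrix sizes $n_{a},m_{b}$ may be infinite. Up to conjugation by a unitary of $\mcm$, a trace preserving inclusion $\phi$ (sharing the unit, as is the standing convention, which is forced since each simple step is unital) is determined by its inclusion matrix $\Lambda=(\lambda_{ba})$, where $\lambda_{ba}\in\bbn\cup\{\infty\}$ records how many times the $a$-th summand of $\mcn$ is repeated inside the $b$-th summand of $\mcm$; concretely $\phi$ sends $x=(x_{a})_{a}$ into the $b$-th block as $\bigoplus_{a}(x_{a}\otimes 1_{\lambda_{ba}})$. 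Let $t_{b}$ be the trace of a minimal projection of $M_{m_{b}}$ and $\tau_{a}$ that of $M_{n_{a}}$. Unitality gives $\sum_{a}\lambda_{ba}n_{a}=m_{b}$, and since a minimal projection of $M_{n_{a}}$ is carried to $\lambda_{ba}$ minimal projections in each block $b$, trace preservation gives $\tau_{a}=\sum_{b}\lambda_{ba}t_{b}$ for every $a$. These two identities are the only constraints, and they are exactly what makes the elementary moves below legitimate.

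The heart of the argument is a two-phase ``split, then glue'' factorization. In the splitting phase I use simple steps of the first kind to replace each summand $M_{n_{a}}$ by a direct sum of copies of $M_{n_{a}}$, one copy for each edge of the Bratteli diagram at $a$ (that is, $\lambda_{ba}$ copies pointing at each block $b$), assigning the copy pointing at $b$ a minimal projection of trace $t_{b}$. This is permissible precisely because $\sum_{b}\lambda_{ba}t_{b}=\tau_{a}$, so the traces of the new minimal projections sum to the old one, which is the trace condition built into a first-kind step. In the gluing phase I group the copies by their target block $b$: all copies pointing at $b$ share the common minimal trace $t_{b}$, so I may iterate simple steps of the second kind to assemble them block-diagonally into a single matrix algebra of size $\sum_{a}\lambda_{ba}n_{a}=m_{b}$, namely $M_{m_{b}}$. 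Doing this for every $b$ produces exactly $\mcm$, and by choosing the matrix units introduced at each step to agree with those of $\phi$ (equivalently, absorbing one unitary of $\mcm$) the composite embedding is $\phi$ on the nose. In the finite-dimensional case only finitely many moves occur and the proof is complete.

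The main obstacle is the genuinely infinite case, where both index sets and the multiplicities $\lambda_{ba}$ may be infinite, so neither phase terminates and the two phases cannot be carried out one after the other. To produce a single countable sequence of simple steps whose inductive limit is $\phi$, I would enumerate the edges of the Bratteli diagram (with multiplicity) as $e_{1},e_{2},\dots$ and interleave the moves: at stage $k$, for the edge $e_{k}=(a,b)$, first peel a single copy $M_{n_{a}}$ of minimal trace $t_{b}$ off the current remnant of the $a$-th summand by a first-kind step with two copies (the peeled piece and a smaller remnant), then attach that piece to the partially assembled $b$-th block by a second-kind step. One has to check that this bookkeeping is consistent (every copy is split off before it is glued, and the common-trace hypothesis for each second-kind step holds) and, crucially, that the partial composites converge in the strong operator topology to $\phi$; here the identity $\sum_{b}\lambda_{ba}t_{b}=\tau_{a}<\infty$ guarantees that the leftover remnant of each finite-size summand has minimal trace tending to $0$, so it disappears in the completed inductive limit and the union of the intermediate algebras is dense in $\mcm$. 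Verifying this convergence, and handling the summands of infinite matrix size (where a remnant of small minimal trace may still carry infinite total trace) with the same care, is where essentially all the work lies.
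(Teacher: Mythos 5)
Your proposal is correct and takes essentially the same route as the paper: the paper's proof simply defers to Lemma 3.5 of \cite{minearxiv} (``examining either the Bratteli diagrams or the minimal projections,'' with countably many steps allowed), and your factorization of the inclusion matrix --- splitting each summand along its Bratteli edges by first-kind steps, gluing into target blocks by second-kind steps, and interleaving the two phases via an enumeration of edges in the infinite case --- is exactly that argument written out in detail. The verifications you flag at the end are routine: realizing the whole chain inside $\mcm$ starting from $\phi(\mcn)$, the partially assembled corner of each block $M_{m_b}$ increases to the full block because every edge is eventually processed, so the union of the intermediate algebras is weakly dense in $\mcm$ and the composite inclusion is $\phi$ by construction, with no separate trace estimate needed even for summands of infinite matrix size.
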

\begin{proof}  The proof is identical to Lemma 3.5 in \cite{minearxiv}, following directly from examining either the Bratteli diagrams or the minimal projections.  The only difference is that since we have a countably infinite number of matrix algebras, each of which can be split into a countably infinite number of matrix algebras, we may need a countably infinite number of steps.
\end{proof}

\subsection{Main Theorem and Examples}

\begin{theorem}\label{T:maininftheorem}  
Let $A$ and $B$ hyperfinite von Neumann algebras with trace preserving conditional expectations onto a type I atomic subalgebra $D$ then:
\begin{enumerate}  
\item If $A$ and $B$ are finite then $A*_{D}B$ is in $\mcr_{3}$
\item If $A$ and $B$ are semifinite, and the trace on $D$ is still semifinite, then $A*_{D}B$ is in $\mcr_{4}$.
\end{enumerate}
Furthermore  $\fdimC(A*_{D}B)=\fdimC(A)+\fdimC(B)-\fdimC(D)$ if this is well defined.

\end{theorem}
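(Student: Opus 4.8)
The plan is to reduce the general theorem to the case already handled by the machinery built up in the paper, using Lemma~\ref{L:abelianD} as the organizing principle. By that lemma, it suffices to prove the result when the amalgamating subalgebra $D'$ is an abelian multimatrix algebra, i.e. a countable direct sum of copies of $\bbc$, since Lemma~\ref{L:abelianD} transfers conclusions about the abelian-$D$ case (membership in $\mcr_3$ or $\mcr_4$, the count of $\mcf_s^t$ and type I atomic summands, and the regulated dimension formula) back to the general atomic type I case. So the first move is to invoke Lemma~\ref{L:abelianD} and reduce to $D=\bigoplus_{n\in N}\overset{p_n}{\underset{t_n}{\bbc}}$ with $N$ countable.

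Next I would use Lemma~\ref{L:infchainlemma} to approximate each of $A$ and $B$ from inside by increasing chains of multimatrix subalgebras $A_1\subseteq A_2\subseteq\cdots$ and $B_1\subseteq B_2\subseteq\cdots$, each containing $D$, with union dense in $A$ and $B$ respectively. Since the amalgamated free product is compatible with inductive limits (the union $\bigcup_j (A_j *_D B_j)$ is dense in $A*_D B$), this realizes $A*_D B$ as the inductive limit of the multimatrix amalgamated free products $A_j *_D B_j$. Each $A_j *_D B_j$ is understood by Theorem~\ref{T:multmatrix}: it lies in $\mcr_3$ and satisfies $\fdim(A_j*_D B_j)=\fdim(A_j)+\fdim(B_j)-\fdim(D)$, which translates into the additive regulated-dimension formula in the finite case.

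The heart of the argument is then to control what happens to the free group factor summands along the inductive limit, and this is where I expect the main obstacle to lie. The connecting maps $A_j *_D B_j \to A_{j+1} *_D B_{j+1}$ are induced by the inclusions $A_j\subseteq A_{j+1}$ and $B_j\subseteq B_{j+1}$, and by Lemma~\ref{L:decomp2} these inclusions decompose into simple steps of the first and second kind. I would use Lemma~\ref{gluelemma} and Lemma~\ref{tensorlemma1} (together with Lemma~\ref{L:removesummand} and Lemma~\ref{L:M2lemma}) to analyze how a single simple step affects the compressed free product, showing that on the relevant corners the connecting map acts as a standard embedding of interpolated free group factors, hence a substandard embedding in the sense defined above. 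The delicate point is that in the semifinite case the traces of the minimal projections may tend to zero or the free group factors may amplify to $\mcf_t^\infty=\underset{1}{B(\mch)}\otimes L(F_{1+t})$; this is exactly what the notion of $\mcf_s^t$ and the regulated dimension were designed to track, since $\rdim$ is invariant under amplification (by the remark following Lemma~\ref{L:rdimP}) whereas $\fdim$ is not.

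Finally, having identified the connecting maps on the free group factor corners as substandard embeddings, I would apply Proposition~\ref{L:substdsemifinite} to conclude that each chain of $\mcf_{s_i}^{t_i}$ summands converges in the inductive limit to a semifinite interpolated free group factor $\mcf_s^t$, with $s=\lim_i s_i$ and $t=\lim_i t_i$. The diffuse hyperfinite and type I atomic summands behave straightforwardly under the limit. This shows $A*_D B$ has the form required for membership in $\mcr_4$ (or $\mcr_3$ when $A$ and $B$ are finite, so that no $B(\mch)$ factors appear and the limit stays finite). For the regulated dimension, since $\rdim$ is additive over direct sums and continuous along the substandard inductive limit (the partial sums $\sum_{j\in J_i}\tau(q_j)^2$ converge to $\lim_i s_i$ as in the proof of Proposition~\ref{L:substdsemifinite}), passing the finite-stage identity $\rdim(A_j*_D B_j)=\rdim(A_j)+\rdim(B_j)-\rdim(D)$ to the limit yields $\fdimC(A*_D B)=\fdimC(A)+\fdimC(B)-\fdimC(D)$ whenever the right-hand side avoids the indeterminate form $\infty-\infty$, which is precisely the well-definedness hypothesis.
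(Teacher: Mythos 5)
Your overall architecture (reduce to abelian $D$ via Lemma~\ref{L:abelianD}, approximate $A$ and $B$ by multimatrix chains from Lemma~\ref{L:infchainlemma}, decompose the connecting maps into simple steps via Lemma~\ref{L:decomp2}, verify substandardness with Lemmas~\ref{gluelemma}, \ref{tensorlemma1}, and \ref{L:M2lemma}, and pass to the limit with Proposition~\ref{L:substdsemifinite}) matches the paper's in spirit, and in the finite case it is essentially a streamlined version of the paper's argument. But there is a genuine gap at the foundation of your chain in the semifinite case: you identify each building block $A_j *_D B_j$ by citing Theorem~\ref{T:multmatrix}, and that theorem does not apply there. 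When $A$ and $B$ are semifinite and $D$ is infinite dimensional, the approximants $A_j$, $B_j$ carry infinite trace; Theorem~\ref{T:multmatrix} is a statement about finite von Neumann algebras (its conclusion is membership in $\mcr_3$, a class of finite algebras, and its formula is stated in terms of $\fdim$, which is only defined for tracial states). The structure of an amalgamated free product of infinite-trace multimatrix algebras over a semifinite atomic abelian $D$ is itself a nontrivial special case of the very theorem you are proving, so your argument assumes what is essentially the hard sub-case.

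The paper's proof is built precisely to avoid this. It introduces the finite-trace cutdowns $q_k^D=\sum_{k'\leq k}p_{k'}^D$ and works with the triply indexed algebras $\mcm(i,j,k)$ and $\mcn(i,j,k)$, interleaving increases of the cutdown level $k$ with increases of the approximation levels $(i,j)$. Every step in its chain then involves only finite-trace corners amalgamated over the \emph{finite-dimensional} algebra $q_k^D D q_k^D$, where Theorem~\ref{maintheorem} genuinely applies; the growth of $D$ itself (the step $\mcn(i_k,j_k,k)\to\mcm(i_k,j_k,k)$) is handled not by citation but by adjoining partial isometries one at a time and invoking Lemma~\ref{L:M2lemma}. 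To repair your proof you would either need to prove the semifinite multimatrix case separately (which amounts to reproducing the paper's interleaved chain) or insert the same cutdown device into your own chain. A secondary, smaller issue: passing the identity $\rdim(A_j*_D B_j)=\rdim(A_j)+\rdim(B_j)-\rdim(D)$ to the limit requires the quantitative control the paper arranges explicitly (choosing $i_k$ so that $\rdim(q_k A_{i_k}q_k)>\rdim(q_k A q_k)-1/k$, plus a separate argument when $\rdim(D)=-\infty$); appealing to continuity along the inductive limit alone does not give $\rdim(A_j)\to\rdim(A)$.
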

\begin{proof} By Lemma \ref{L:abelianD}  assume without loss of generality that $D$ is abelian. If $D$ is finite dimensional we can apply Theorem \ref{maintheorem} (since if $A$ and $B$ were not finite, then the trace on the finite dimensional $D$ would not be semifinite),  so we may also assume $D=\bigoplus\limits_{k=1}^{\infty}\overset{p_{k}^{D}}{\underset{t_{k}^{D}}{\bbc}}$.

Use Lemma \ref{L:infchainlemma} to define the sequences $A_{i}$ and $B_{j}$ to be chains of subalgebras of $A$ in $B$, respectively, containing $D$.   Let $q_{k}^{D}=\sum_{k'=1}^{k}p_{k'}^{D}$.  Then let
\[
\mcm(i,j,k)=\left(q_{k}^{D}A_{j}q_{k}^{D}*_{q_{k}^{D}Dq_{k}^{D}}q_{k}^{D}B_{j}q_{k}^{D}\right),
\] and let $\mcn(i,j,k)$ be
\[
\left(q_{k-1}^{D}A_{j}q_{k-1}^{D}*_{q_{k-1}^{D}Dq_{k-1}^{D}}q_{k-1}^{D}B_{j}q_{k-1}^{D}\right)\oplus\left(p_{k}^{D}A_{i}p_{k}^{D}*_{p_{k}^{D}Dp_{k}^{D}}p_{k}^{D}B_{j}p_{k}^{D}\right).
\]


 Note that in the construction from Lemma \ref{L:infchainlemma}, if $A^{(a)}$ is the atomic part of $A$, then each $A_{i}$ contains $q_{k}^{D}A^{(a)}q_{k}^{D}$ for some $k$ (and the same is true for $B$).

For fixed $k$, the sequence $\mcm(i,j,k)$, is an approximating sequence for the amalgamated free product of hyperfinite von Neumann algebras over $q_{k}^{D}D$ (which is finite dimensional), as in Theorem \ref{maintheorem}.  Thus we can choose $i_{k}$ and $j_{k}$ sufficiently large to be the start of the sequence in the proof of that theorem.  In particular, for any minimal projection $p\in A_{i_{k}}$ (resp $B_{j_{k}}$) from the diffuse part of $A$ (resp. $B$), if $p\leq p_{k'}^{D}$ with $k'\leq k$ then $\tau(p)<t_{k'}^{D}-\tau(p')$ for any minimal projection $p'\in B$ (resp $A$) with $p'\leq p_{k'}^{D}$, unless $p_{k'}^{D}$ is minimal in $B$ (resp. $A$), and also if the number of interpolated free group factors in $\mcm(i,j,k)$ is stable for all $i\geq i_{k}$ and $j\geq j_{k}$.  Furthermore we can assume that if $\fdimC(A)$ (resp $B$) is finite then $\fdimC(q_{k}A_{i_{k}}q_{k})>\fdimC(q_{k}Aq_{k})-1/k$.  We chose $i_{k}$ and $j_{k}$ also sufficiently large so that $A_{i}$ contains $q_{k}^{D}A^{(a)}q_{k}^{D}$ and $B_{j}$ contains $q_{k}^{D}B^{(a)}q_{k}^{D}$ for all $i,j\geq i_{k},j_{k}$.

Our goal is to show that the the inclusions of the sequence
\[
\mcm(i_{2},j_{2},1)\to \mcn(i_{2},j_{2},2)\to \mcm(i_{2},j_{2},2)\to \mcm(i_{3},j_{3},2)\to \mcn(i_{3},j_{3},3)\to\dots
\]
are substandard.

First examine the steps of the form $\mcm(i_{k},j_{k},k)\to \mcm(i_{k+1},j_{k+1},k)$.  Since $k$ does not change in this step, it is just like the steps in the proof of Theorem \ref{maintheorem} (proof from \cite{minearxiv}).  Using Lemma \ref{L:decomp2}, we break this up into a sequence of inclusions induced by simple steps in the subalgebras of $A$ or $B$, so we need only show that this type of inclusion is substandard.  Consider the inclusion $\mcm(i,j,k) \to \mcm(i+1,j,k)$ with $i_{k}\leq i<i_{k+1}$ and $j_{k}\leq j\leq j_{k+1}$ where $A_{i}\to A_{i+1}$ is a simple step (note this may not work precisely with our original indexing, but this does not affect the proof and eases notation).

If the inclusion $A_{i}\to A_{i+1}$ is a simple step of the first kind, which makes copies of a summand, $M_{n}$, of $A_{i}$, then either $M_{n}$ is contained in some interpolated free group factor summand, or it is the corner of some matrix factor summand (as in the proof of Theorem \ref{maintheorem}).  In the former case we apply Lemma \ref{tensorlemma1} to show this is a standard embedding.  In the latter we see the matrix factor is included in another matrix factor or factors.

Alternately, if the inclusion $A_{i}\to A_{i+1}$ is a simple step of the second kind, then we are adding a partial isometry $v$ connecting two matrix algebras in $A_{i}$.  Again we proceed exactly as in Theorem \ref{maintheorem}.  If both $vv^{*}$ and $v^{*}v$ are in an interpolated free group factor we apply Lemma \ref{gluelemma} to show this is a standard embedding.  If only one of $vv^{*}$ and $v^{*}v$ is in an interpolated free group factor we apply Lemma \ref{L:M2lemma} to show that this is a substandard embedding. If neither $vv^{*}$ nor $v^{*}v$ are in interpolated free group factors, then they are both in matrix algebras, then these are embedded in another matrix algebra.

Thus applying Proposition \ref{L:substdsemifinite} to this sequence of inclusions we see then that any interpolated free group factor summand in $\mcm(i_{k-1},j_{k-1},k)$, is embedded in another in $ \mcm(i_{k},j_{k},k)$ by a substandard embedding, and so this inclusion is substandard.\\

Next we examine the inclusions of the form $\mcn(i_{k},j_{k},k)\to \mcm(i_{k},j_{k},k)$.  This is similar to the proof of Theorem \ref{T:multmatrix} (from \cite{kenAmJM}), since $i_{k}$ and $j_{k}$ do not change, and thus we have a fixed pair of multimatrix algebras $A_{i_{k}}$ and $B_{j_{k}}$.

Let $q_{k}^{D}A_{i_{k}}q_{k}^{D}=\bigoplus_{\ell\in I_{A_{i_{k}}}}\overset{p_{\ell}}{M_{n_{\ell}}}$ and $q_{k}^{D}B_{j_{k}}q_{k}^{D}=\bigoplus_{\ell\in I_{B_{i_{k}}}}\overset{p_{\ell}}{M_{n_{\ell}}}$.  Now for each $\ell\in I_{A_{i_{k}}}\cup I_{B_{j_{k}}}$, if $p_{\ell}$ is not orthogonal to either $q_{k-1}^{D}$ or $p_{k}^{D}$, then choose a $v_{\ell}\in M_{n_{\ell}}$ which is a partial isometry so that $v_{\ell}v_{\ell}^{*}\leq p_{k}^{D}$ and $v_{\ell}^{*}v_{\ell}\leq q_{k-1}$, and each is minimal in $A_{i_{k}}$ or $B_{j_{k}}$.  Let $V$ be the set of these $v_{\ell}$.  Then note that $\mcn(i_{k},j_{k},k)\cup V$ generates $\mcm(i_{k},j_{k},k)$.  Let $\mcn(i_{k},j_{k},k,V')=vN(\mcn(i_{k},j_{k},k)\cup V')$ for $V'\subseteq V$.  We show that for $v\in V$ and $V'\subseteq V\backslash\{v\}$, the inclusion $\mcn(i_{k},j_{k},k,V')\to \mcn(i_{k},j_{k},k,V'\cup\{v\})$ is substandard.

Let $q=vv^{*}+v^{*}v$, and examine $q\mcn(i_{k},j_{k},k,V')q$, and the algebra $M_{2}$ generated by $v$.  As in the proofs of Theorem \ref{T:multmatrix} and Theorem \ref{maintheorem}, we see that 
\[
q\mcn(i_{k},j_{k},k,V')q*_{\overset{vv^{*}}{\bbc}\oplus\overset{v^{*}v}{\bbc}}M_{2}=q\mcn(i_{k},j_{k},k,V'\cup \{v\})q.
\]

If either of $vv^{*}$ or $v^{*}v$ are minimal and central in $q\mcn(i_{k},j_{k},k,V')q$, (without loss of generality assume $vv^{*}$), then $vv^{*}$ is a minimal projection in a matrix algebra summand $M_{n}$ in $\mcn(i_{k},j_{k},k,V')$.  Any matrix algebra summand  $M_{m'}$ under $v^{*}v$ in  $q\mcn(i_{k},j_{k},k,V')q$ must be part of a matrix algebra summand $M_{m}$ in  $\mcn(i_{k},j_{k},k,V')$. Then the inclusion of $M_{m}$ in $\mcn(i_{k},j_{k},k,V'\cup\{v\})$ maps it to the corner of a matrix summand $M_{m+nm'}$.  Any interpolated free group factor under $v^{*}v$, is included into an amplification of itself, which is a substandard embedding.  Finally if there is a diffuse hyperfinite algebra under $v^{*}v$ then it is included in a diffuse hyperfinite algebra.

If neither $vv^{*}$ nor $v^{*}v$ is minimal and central, then we can apply Lemma \ref{L:M2lemma}.  This shows us that the inclusion of $q\mcn(i_{k},j_{k},k,V')q$ into $q\mcn(i_{k},j_{k},k,V'\cup \{v\})q$ is substandard, and thus so is the inclusion of  $\mcn(i_{k},j_{k},k,V')$ into $\mcn(i_{k},j_{k},k,V'\cup \{v\})$. Thus, applying Proposition \ref{L:substdsemifinite}, so is the inclusion $\mcn(i_{k},j_{k},k)\to \mcm(i_{k},j_{k},k)$.\\

In the final case, $\mcm(i_{k},j_{k},k-1)\to \mcn(i_{k},j_{k},k)$ is immediate.

Thus the inclusions in the chain,
\[
\mcm(i_{2},j_{2},1)\to \mcn(i_{2},j_{2},2)\to \mcm(i_{2},j_{2},2)\to \mcm(i_{3},j_{3},2)\to \mcn(i_{3},j_{3},3)\to\dots,
\]
are all substandard.  At all stages this is the direct sum of a finite number of interpolated free group factors and a hyperfinite algebra.  Note at each stage each interpolated free group factor is mapped into another interpolated free group factor with both trace and regulated dimension at least as large.  Thus, applying Proposition \ref{L:substdsemifinite} once more, the inductive limit is a countable direct sum of factors of the form $\mcf_{s}^{t}$ and a hyperfinite algebra, and thus in $\mcr_{4}$.  If $A$ and $B$ are finite, then clearly $A*_{D}B$ is finite, and thus in $\mcr_{3}$.\\

If all the regulated dimensions are defined, as well as the sum then it is clear that $\fdimC (A*_{D}B)=\lim_{k\to\infty}\fdimC(\mcm(i_{k},j_{k},k))$.  Choose $k\in I_{D}$ and denote $A_{i_{k}}=\bigoplus_{\ell\in I_{A}}\overset{p_{\ell}}{\underset{t_{\ell}}{M_{n_{\ell}}}}$.
Then 
\[
\fdimC(q_{k}A_{i_{k}}q_{k})=-\sum_{\ell\in I_{A},p_{\ell}q_{k}\ne 0}t_{\ell}^{2}
\]
and 
\[
\fdimC(A_{i_{k}})=-\sum_{\ell\in I_{A}}t_{\ell}^{2}.
\]
For every $\ell\in I_{D}$ so that $p_{\ell}q_{k}=0$, there is a minimal projection $p_{\ell}'\in M_{n_{\ell}}$ less than some $p_{k'}^{D}$ with $k'>k$.  This tells us that
\[
\sum_{\ell\in I_{D},p_{\ell}q_{k}=0}t_{\ell}^{2}<\sum_{k'>k}(t_{k'}^{D})^{2}.
\]
Thus
\[
\left|\fdimC(q_{k}A_{i_{k}}q_{k})-\fdimC(A_{i_{k}})\right|<\left|\sum_{k'>k}(t_{k'}^{D})^{2}\right|=\fdimC((1-q_{k})D(1-q_{k}))|.
\]
Thus if $\fdimC(D)\ne -\infty$ then $\lim_{k\to\infty}\fdimC(q_{k}A_{i_{k}}q_{k})=\fdimC(A)$.  The same calculation works for $B_{j_{k}}$.  Then note by Theorem \ref{maintheorem}
\[
\fdimC(\mcm(i_{k},j_{k},k))=
\fdimC(q_{k}A_{i_{k}}q_{k})+\fdimC(q_{k}B_{j_{k}}q_{k})
-\fdimC(q_{k}Dq_{k}).
\]
Then if $\fdimC(D)\ne-\infty$ then 
\[
\lim_{k\to\infty}\fdimC(\mcm(i_{k},j_{k},k))=\lim_{k\to\infty}\fdimC(A_{i_{k}})+\lim_{k\to\infty}\fdimC(B_{i_{k}})-\fdimC(D)
\]
\[
=\fdimC(A)+\fdimC(B)-\fdimC(D).
\]
If $\fdimC(D)=-\infty$ then we see that if $\fdimC(A)$ and $\fdimC(B)$ are not $-\infty$, so $\fdimC(q_{k}A_{i_{k}}q_{k})+\fdimC(q_{k}B_{j_{k}}q_{k})$ is bounded below, while $-\fdimC(q_{k}Dq_{k})$ goes to $\infty$, the equation again holds.

\end{proof}

\begin{eg}Let:
\[
D=\bigoplus_{i=1}^{\infty}\underset{\frac{1}{i}}{\overset{p_{i}^{D}}{\bbc}}
\]
\[
A=\bigoplus_{i=1}^{\infty}\overset{p^{A}_{i}}{R}, \tau(p^{A}_{i})=\frac{1}{2i-1}+\frac{1}{2i}
\]
\[
B=\overset{p^{B}_{0}}{R}\oplus\bigoplus_{i=1}^{\infty}\overset{p^{B}_{i}}{R}, \tau(p^{B}_{i})=\frac{1}{2i}+\frac{1}{2i+1}, \tau(p_{0}^{B})=1.
\]
Here we set $p_{2i-1}^{D}+p_{2i}^{D}=p_{i}^{B}$ and $p_{2i}^{D}+p_{2i+1}^{D}=p_{i}^{B}$, with $p_{0}^{B}=p_{1}^{D}$. We can see that $r=\fdimC(A)+\fdimC(B)-\fdimC(D)=\frac{\pi^{2}}{6}$.  From there it is not hard to check that  $A*_{D}B=\mcf_{\frac{\pi^{2}}{6}}^{\infty}$.  This shows that even if all the factor summands of $A$ and $B$ are finite, the amalgamated free product may have a non-finite factor summand (unlike the $\mcr_{1}$ and $\mcr_{2}$ cases).

\end{eg}

\begin{eg}  Let
\[
D=\bigoplus_{i=1}^{\infty}\underset{1}{\bbc}
\]
\[
A=B=\bigoplus_{i=1}^{\infty}\underset{1/2}{\bbc}.
\]
Then computing we see
\[
\fdimC(A)+\fdimC(B)-\fdimC(D)=-\infty-\infty+\infty,
\]
which is not defined.  Computing the product we see that $A*_{D}B=\bigoplus_{i=1}^{\infty}L(\bbz)\otimes M_{2}$.  In this case $\fdimC(A*_{D}B)=0$, however it would be easy to add a summand so that we could get any value from this.

\end{eg}

\section{Closedness of $\mcr_{3}$ and $\mcr_{4}$}

The following lemma was proved as Theorem 3.1 in \cite{KenNate2004}.

\begin{lemma}\label{L:sillyRlemma}  Let $R$ and $R'$ be hyperfinite II$_{1}$ factors. Let $D=\oplus_{i\in I}\overset{p_{i}}{\underset{t_{i}}{\bbc}}$ ($I$ countable) be a common subalgebra of $R$ and $R'$.  Then $R*_{D}R'=L(F_{1+\sum_{i\in I}t_{i}^{2}})$, and this is generated by $R'\cup\{q_{j}X_{j}q_{j}\}_{j\in J}$, where the $X_{j}$ form a semicircular system which is free with $R'$ and $\sum_{j\in J}\tau(q_{j})^{2}=\sum_{i\in I}\tau(p_{i})^{2}$, and $\tau(q_{j})\leq 1$ for all $j\in J$.
\end{lemma}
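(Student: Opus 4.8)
The plan is to first pin down the isomorphism class of $\mcm:=R*_{D}R'$ by a regulated dimension computation, and then to exhibit the explicit standard generating set; the second step will also be what supplies factoriality. Since $R$ and $R'$ are hyperfinite II$_1$ factors equipped with trace preserving conditional expectations onto the atomic abelian $D$, Theorem~\ref{T:maininftheorem}(1) already gives $\mcm\in\mcr_{3}$ together with $\fdimC(\mcm)=\fdimC(R)+\fdimC(R')-\fdimC(D)$. A diffuse hyperfinite factor has regulated dimension $0$, while $\fdimC(D)=-\sum_{i\in I}t_{i}^{2}$, so $\fdimC(\mcm)=\sum_{i\in I}t_{i}^{2}$. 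Hence, \emph{once} we know $\mcm$ is a single interpolated free group factor $L(F_{s})$ (carrying its normalized trace, $\tau(I)=1$), the identity $\fdimC(L(F_{s}))=\tau(I)^{2}(\fdim-1)=s-1$ forces $s=1+\sum_{i\in I}t_{i}^{2}$, which is the claimed value since $t_{i}=\tau(p_{i})$.

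What remains is to show $\mcm$ is a factor and to produce the generators. I would do both at once by constructing the Voiculescu--Dykema \emph{standard form}: a copy $R_{0}$ of the hyperfinite II$_1$ factor (which we may take to be the given $R'\subseteq\mcm$), a semicircular system $\{X_{j}\}_{j\in J}$ free from $R_{0}$, and projections $q_{j}\in R_{0}$ with $\tau(q_{j})\le 1$, such that $\mcm=vN(R_{0}\cup\{q_{j}X_{j}q_{j}\}_{j\in J})$ and $\sum_{j\in J}\tau(q_{j})^{2}=\sum_{i\in I}t_{i}^{2}$. Any algebra presented in this form is the interpolated free group factor $L(F_{1+\sum_{j}\tau(q_{j})^{2}})$, and in particular is a factor; so establishing the standard form simultaneously rules out the extra hyperfinite and extra free group factor summands allowed by mere membership in $\mcr_{3}$ and yields the generating set exactly as stated.

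To construct the standard form I would re-run the inductive-limit machinery from the proof of Theorem~\ref{T:maininftheorem}, but retaining the semicircular generators rather than only their trace data. Exhaust $D$ by the finite central cut-downs $q_{k}=\sum_{i\le k}p_{i}$, and use Lemma~\ref{L:infchainlemma} to write $R=\overline{\cup_{j}A_{j}}$ and $R'=\overline{\cup_{j}B_{j}}$ as limits of multimatrix subalgebras containing $D$; then $\mcm$ is the inductive limit of the amalgamated free products $q_{k}A_{j}q_{k}*_{q_{k}Dq_{k}}q_{k}B_{j}q_{k}$ over the finite-dimensional $q_{k}Dq_{k}$, each computed by Theorem~\ref{T:multmatrix}. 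As in the proofs of Theorems~\ref{maintheorem} and~\ref{T:maininftheorem}, each connecting inclusion decomposes via Lemma~\ref{L:decomp2} into simple steps which, by Lemmas~\ref{L:M2lemma} and~\ref{tensorlemma1}, induce standard or substandard embeddings; the \emph{definition} of standard embedding furnishes at each stage an explicit hyperfinite base together with adjoined compressed semicircular elements $qXq$. Propagating these generating sets through the limit with Proposition~\ref{L:substdsemifinite} merges the hyperfinite bases into a single copy of the hyperfinite II$_1$ factor, free from the accumulated semicircular system, and the compression weights satisfy $\sum_{j}\tau(q_{j})^{2}=\sum_{i}t_{i}^{2}$, matching the free dimension bookkeeping.

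The main obstacle is precisely this last phase: regulated dimension certifies membership in $\mcr_{3}$ but cannot by itself exclude leftover hyperfinite or additional free group factor summands, so factoriality has to come from the explicit standard form. The delicate point is coherently tracking the generating sets across the two interleaved limits (the $A_{j},B_{j}$ refinements and the $q_{k}$ exhaustion) so that in the limit the hyperfinite pieces genuinely assemble into one II$_1$ factor that is free from one semicircular system, and so that the accumulated compression weights sum to $\sum_{i}t_{i}^{2}$ with each $\tau(q_{j})\le 1$. The standard-embedding properties and Proposition~\ref{L:substdsemifinite} are exactly the tools that make this rigorous, so the proof is essentially the argument of Theorem~\ref{T:maininftheorem} carried out while keeping the semicircular generators explicit.
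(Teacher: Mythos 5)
Your first paragraph (the regulated dimension bookkeeping) is fine, and you correctly identify that the real content of the lemma is producing a standard form whose base is the prescribed subalgebra. But that is exactly where your argument has a genuine gap: you write that the hyperfinite base $R_{0}$ of the standard form ``may be taken to be the given $R'\subseteq\mcm$,'' and nothing in the machinery you invoke delivers this. Re-running the proof of Theorem \ref{T:maininftheorem} while retaining the semicircular generators produces, via Lemmas \ref{L:M2lemma}, \ref{tensorlemma1}, \ref{gluelemma} and Proposition \ref{L:substdsemifinite}, a standard form whose hyperfinite base is assembled from the multimatrix approximants $A_{j}$, $B_{j}$ and the auxiliary algebras created inside those lemmas; there is no control relating that base to the particular embedded copy of $R'$, which is the weak closure of $\bigcup_{j}B_{j}$ and is smeared across the free product in the limit. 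Knowing $\mcm\cong L(F_{s})$ together with the mere existence of a hyperfinite subfactor $R'\subseteq\mcm$ does not imply $\mcm=vN(R'\cup\{q_{j}X_{j}q_{j}\}_{j\in J})$ with the $X_{j}$ semicircular and free from $R'$ and $q_{j}\in R'$: that is a very special position for a subfactor to occupy, and establishing it for the specific copy of $R'$ is the entire content of the lemma beyond Theorem \ref{T:maininftheorem} (otherwise the lemma would be an immediate corollary of that theorem). The later applications (Corollary \ref{C:sillyRcor}, Lemma \ref{L:sillyLFlemma}) depend on precisely this point, since they conjugate projections into the distinguished copy of $R'$ (resp.\ $R$, $A$) by unitaries of the ambient factor, which only works because the generating set is anchored at that copy.

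The paper closes this gap by a mechanism that tracks $R'$ at every step rather than by the abstract substandard-embedding formalism. For finite $I$ it inducts on the minimal projections of $D$: the base case $|I|=1$ is Corollary 3.6 of \cite{kenIFGF}, which states that $R*R'\cong R'*L(\bbz)$ via an isomorphism \emph{preserving} $R'$; the inductive step removes a minimal projection $p_{i_{0}}$ of smallest trace and uses parts (b), (c), (d) of Theorem 2.1 of \cite{kenLMS}, which are statements about generation of $R*_{D}R'$ relative to the original subalgebras $(1-p_{i_{0}})R'(1-p_{i_{0}})$ and $R'$, together with a free $L(\bbz)$ appearing in the corner $p_{i_{0}}(R*_{D}R')p_{i_{0}}$. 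The countable case is then obtained by cutting down by $r_{k}=\sum_{i\leq k}p_{i}$ and passing to an inductive limit along substandard embeddings, which is the one part of your outline that matches the paper. To repair your proof you would need to import those preservation results of Dykema (or reprove them); they cannot be extracted from Proposition \ref{L:substdsemifinite} and the structure of the approximating sequence alone.
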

\begin{proof}  
The fact that $R*_{D}R'$ is the interpolated free group factor mentioned follows from Theorem \ref{T:maininftheorem}, so here we need only check that it is generated correctly.

First assume that $I$ is finite, and proceed by induction on the size of $I$.  In the case $|I|=1$, Corollary 3.6 in \cite{kenIFGF} establishes that $R*R'$ is isomorphic to $R'*L(\bbz)$ with a map preserving $R'$.  If $\tau(p_{1})>1$ then choose some integer $N$ larger than $\tau(p_{1})$, and write this as the algebra generated by $R'\cup\{q_{j}X_{j}q_{j}\}_{j=1}^{n^{2}}$ where the $q_{j}$ are of trace $\frac{1}{n}$. This completes the base case.

 Let $i_{0}$ be such that $t_{i_{0}}$ is minimal.  Using Theorem 2.1 (b) from \cite{kenLMS}, we define 
\[
\mcn_{1}=vN\left((1-p_{i_{0}})R(1-p_{i_{0}})\cup (1-p_{i_{0}})R'(1-p_{i_{0}})\right)
\]
\[
=(1-p_{i_{0}})R(1-p_{i_{0}})*_{(1-p_{i_{0}})D} (1-p_{i_{0}})R'(1-p_{i_{0}}).
\]
Then by our induction hypothesis, $\mcn_{1}=L(F_{1+\sum_{i\in I,i\ne i_{0}}\frac{t_{i}^{2}}{(1-t_{i_{0}})^{2}}})$, generated by  $(1-p_{i_{0}})R'(1-p_{i_{0}})\cup\{q_{j}X_{j}q_{j}\}_{J'\}}$, where $\sum_{j\in J'}\tau(q_{j})^{2}=\sum_{i\in I\backslash\{i_{0}\}}\tau(p_{i})^{2}$. 

Then using part (c) of Theorem 2.1 in \cite{kenLMS}, we see that $r=p_{i_{0}}$, thus $\tilde{A}=(1-p_{i_{0}})R(1-p_{i_{0}})\oplus \bbc p_{i_{0}}$, and thus $p_{i_{0}}\tilde{A}p_{i_{0}}=\bbc$. From this we see directly that for  $\mcn_{2}=vN(\tilde{A}\cup R')$, $\mcn_{1}=(1-p_{i_{0}})\mcn_{2}(1-p_{i_{0}})$.  Thus we write $\mcn_{2}=vN(R'\cup\{q_{j}X_{h}q_{j}\}_{j\in J'})$.

Using part (d) of Theorem 2.1 in \cite{kenLMS}, we see that $p_{i_{0}}(R*_{D}R')p_{i_{0}}=p_{i_{0}}\mcn_{2}p_{i_{0}}*L(\bbz)$.  Then, after splitting up $L(\bbz)$ if $\tau(p_{0})>1$ as before,we see that $R*_{D}R'$ is generated by $R'\cup\{q_{j}X_{j}q_{j}\}_{j\in J}$, completing the proof for $I$ finite.

For the infinite case, let $I=\bbn$, ordered so that $t_{i}\leq t_{i-1}$, and let $r_{k}=\sum_{i=1}^{k}r_{i}^{D}$.  Let $\mcm_{k}=r_{k}Rr_{k}*_{r_{k}D}r_{k}R'r_{k}$.  The above tells us that each $\mcm_{k}$ is generated by $r_{k}R'r_{k}\cup \{q_{j}X_{j}q_{j}\}_{j\in J_{k}}$ and that the embeddings of $\mcm_{k}\to\mcm_{k+1}$ are substandard, thus the generating set in the inductive limit $\mcm$ is as desired, completing the proof.

\end{proof}
\begin{cor}\label{C:sillyRcor} Let $A$ and $B$ each be copies of the hyperfinite II$_{\infty}$ factor, with subalgebra $D=\bigoplus\limits_{i=1}^{\infty}\overset{p_{i}}{\underset{t_{i}}{\bbc}}$ and trace preserving conditional expectation onto $D$ so the trace is semifinite on $D$.  Then $A*_{D}B=\mcf_{-\fdimC(D)}^{\infty}$, and this is generated by $A\cup\{q_{j}X_{j}q_{j}\}_{i=1}^{\infty}$ where the $X_{j}$ form a semicircular system which is free with $A$  and $\sum_{j\in J}\tau(q_{j})^{2}=\sum_{i\in I}\tau(p_{i})^{2}$, and $\tau(q_{j})\leq 1$ for all $j\in J$.
\end{cor}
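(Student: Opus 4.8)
The plan is to realize $A*_DB$ as an inductive limit of finite corners, each governed by the already-established finite-index case of Lemma \ref{L:sillyRlemma}, and then to invoke Proposition \ref{L:substdsemifinite} to identify the limit. Write $r_k=\sum_{i=1}^k p_i\in D$, a projection of finite trace $\tau(r_k)=\sum_{i=1}^k t_i$. Since $r_k\nearrow I$ in the strong operator topology, the increasing union $\bigcup_k r_k(A*_DB)r_k$ is strongly dense, so $A*_DB$ is the inductive limit of the corners $\mcm_k:=r_k(A*_DB)r_k$. Because $r_k\in D$ and $A,B$ are free over $D$, each corner decomposes as $\mcm_k=r_kAr_k*_{r_kDr_k}r_kBr_k$, where $r_kAr_k$ and $r_kBr_k$ are hyperfinite II$_1$ factors (finite corners of the II$_\infty$ factors $A,B$) and $r_kDr_k=\bigoplus_{i=1}^k\bbc p_i$ is finite dimensional and abelian.

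First I would apply the finite-index case of Lemma \ref{L:sillyRlemma} to each $\mcm_k$. Renormalizing the trace on $r_kAr_k$ to a state (dividing by $\tau(r_k)$), the lemma presents $\mcm_k$ as an interpolated free group factor generated by $r_kAr_k$ together with a semicircular system $\{q_jX_jq_j\}_{j\in J_k}$ free from $r_kAr_k$, with $\sum_{j\in J_k}\tau(q_j)^2=\sum_{i=1}^k\tau(p_i)^2$ in the original (unnormalized) trace; after subdividing any $q_j$ of trace exceeding $1$ into smaller pieces, splitting the corresponding semicircular element exactly as in the base case of that lemma, we may take $\tau(q_j)\le 1$. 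In the notation of the statement this reads $\mcm_k=\mcf_{s_k}^{t_k}$ with $s_k=\sum_{i=1}^k t_i^2$ and $t_k=\tau(r_k)=\sum_{i=1}^k t_i$.

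Next I would verify that the corner inclusions $\mcm_k\hookrightarrow\mcm_{k+1}$, which are trace preserving but non-unital (sending $I_{\mcm_k}$ to the projection $r_k$), are substandard. This is immediate from a compatible choice of generators: the finite case of Lemma \ref{L:sillyRlemma} can be arranged so that $J_k\subseteq J_{k+1}$ and the semicircular system and hyperfinite part at stage $k$ sit inside those at stage $k+1$, which is precisely the definition of a standard embedding on a finite-trace corner. With substandardness in hand, Proposition \ref{L:substdsemifinite} applies and the inductive limit is $\mcf_s^t$ with $s=\lim_k s_k=\sum_i t_i^2=-\fdimC(D)$ and $t=\lim_k t_k=\sum_i t_i=\tau(I_A)=\infty$. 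Hence $A*_DB=\mcf_{-\fdimC(D)}^\infty$, and the generating description passes to the limit: $\{X_j\}_{j\in J}$ with $J=\bigcup_k J_k$ is a semicircular system free from $\overline{\bigcup_k r_kAr_k}=A$, with $\sum_{j\in J}\tau(q_j)^2=\sum_i\tau(p_i)^2$ and $\tau(q_j)\le 1$. That the isomorphism type is $\mcf_{-\fdimC(D)}^\infty$ also follows from Theorem \ref{T:maininftheorem}, since $\fdimC(A)=\fdimC(B)=0$.

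The main obstacle I anticipate is bookkeeping rather than conceptual. One must track the trace normalization carefully through the corner compressions to confirm that $t_k\to\infty$ produces exactly the infinite-trace factor $\mcf_s^\infty=\underset{1}{B(\mch)}\otimes L(F_{1+s})$ and not a finite free group factor, and one must check that freeness of the semicircular system from each corner $r_kAr_k$ persists, in the limit, as freeness from the whole II$_\infty$ factor $A$. Both points are supplied by Proposition \ref{L:substdsemifinite}, whose proof already organizes the limit generators into a single hyperfinite factor (here II$_\infty$, since $t=\infty$) together with a semicircular system, so the residual work is only to pin down the parameters $s$ and $t$ and the normalization $\tau(q_j)\le 1$.
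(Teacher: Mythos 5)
Your strategy coincides with the paper's: compress by the finite-trace projections $r_k=\sum_{i=1}^k p_i$, apply Lemma \ref{L:sillyRlemma} at each finite stage, and let Proposition \ref{L:substdsemifinite} identify the inductive limit. However, the step that sets up your inductive system contains a genuine error: it is not true that $r_k(A*_DB)r_k=r_kAr_k*_{r_kDr_k}r_kBr_k$. Compression of freeness gives only that $r_kAr_k$ and $r_kBr_k$ are free over $r_kDr_k$ inside the corner, i.e.\ that the subalgebra $\mcn_k:=vN(r_kAr_k\cup r_kBr_k)$ is isomorphic to $r_kAr_k*_{r_kDr_k}r_kBr_k$; the corner itself is strictly larger. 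For instance $r_ka(1-r_k)br_k$ (with $a\in A$, $b\in B$) lies in the corner; writing $a'=r_ka(1-r_k)$ and $b'=(1-r_k)br_k$, both lie in $\ker E_D$, so $a'b'$ is an alternating centered word, and freeness makes it $L^2$-orthogonal to every word in $\mcn_k$ (each letter of such a word is compressed on both sides by $r_k$, and $r_kd(1-r_k)=0$ for $d\in D$), yet $a'b'\neq 0$ for suitable $a,b$. Hence $\mcn_k\subsetneq r_k(A*_DB)r_k$. Indeed, granting the theorem being proved, the corner equals $\mcf_{-\fdimC(D)}^{\tau(r_k)}$, whose free part already sees the full sum $\sum_{i=1}^\infty t_i^2$, whereas you assert the corner is $\mcf_{s_k}^{t_k}$ with only the partial sum $s_k=\sum_{i=1}^kt_i^2$. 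So your description of each $\mcm_k$, and the application of Lemma \ref{L:sillyRlemma} to it, are wrong as stated; this is the same false principle (``compression of an amalgamated free product over a projection of $D$ is the free product of the compressions'') that Theorem 2.1(d) of the reference used in Lemma \ref{L:sillyRlemma} explicitly corrects by an extra free $L(\bbz)$.

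The repair is exactly what the paper does: run the argument on the subalgebras $\mcn_k$ rather than on the corners. Freeness compresses, so $\mcn_k\cong r_kAr_k*_{r_kDr_k}r_kBr_k$ and Lemma \ref{L:sillyRlemma} applies to it; the compatible choice of generators in that lemma makes each inclusion $\mcn_{k-1}\to\mcn_k$ substandard, carrying $r_{k-1}Ar_{k-1}$ into $r_kAr_k$; and the union $\bigcup_k\mcn_k$ is strongly dense in $A*_DB$ not merely because $r_k\nearrow I$, but because $\bigcup_kr_kAr_k$ is strongly dense in $A$ and $\bigcup_kr_kBr_k$ in $B$, so that $vN\bigl(\bigcup_k\mcn_k\bigr)\supseteq vN(A\cup B)=A*_DB$. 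With that substitution, your limit computation via Proposition \ref{L:substdsemifinite} --- $s=\lim_ks_k=-\fdimC(D)$, $t=\lim_k\tau(r_k)=\infty$, and the generating set passing to the union --- goes through and reproduces the paper's proof.
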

\begin{proof}
Let $r_{k}=\sum_{i=1}^{k}p_{i}$.  Then by Lemma \ref{L:sillyRlemma} any $\mcn_{k}=r_{k}Ar_{k}*_{r_{k}Dr_{k}}r_{k}Br_{k}$ is generated $r_{k}Ar_{k}\cup\{q_{j}X_{j}q_{j}\}_{j\in J_{k}}$, and furthermore from the proof of that lemma $\mcn_{k-1}\to\mcn_{k}$ is a substandard embedding mapping $r_{k-1}Ar_{k-1}$ into $r_{k}Ar_{k}$.   Since each embedding is substandard, the inductive limit of $\mcn_{k}\to \mcn_{k+1}\to...$ is the desired semifinite interpolated free group factor, and $A$ is preserved.
\end{proof}
\begin{lemma}\label{L:sillyLFlemma}  Let $A=\mcf_{s_{A}}^{t}$ and $B=\mcf_{s_{B}}^{t}$, each with subalgebra $D=\bigoplus\limits_{k\in K}\overset{p_{k}^{D}}{\underset{t_{k}}{\bbc}}$.  Then $\mcm=A*_{D}B\cong\mcf_{s_{A}+s_{B}-\fdimC(D)}^{t}$, and the inclusion of $A\to \mcm$ is substandard.

\end{lemma}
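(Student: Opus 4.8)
The plan is to reduce to the case where $D$ is finite dimensional and then to build $\mcm$ from $A$ by a chain of standard embeddings produced by adjoining free semicircular systems. First I would dispose of the general case by compression: setting $r_{n}=\sum_{k\le n}p_{k}^{D}\in D$, the corner $r_{n}\mcm r_{n}$ equals $r_{n}Ar_{n}*_{r_{n}Dr_{n}}r_{n}Br_{n}=\mcf_{s_{A}}^{t_{n}}*_{r_{n}Dr_{n}}\mcf_{s_{B}}^{t_{n}}$ with $r_{n}Dr_{n}$ finite dimensional and $t_{n}=\tau(r_{n})<\infty$, using that regulated dimension is invariant under such compressions (Lemma \ref{L:rdimP}). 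Granting the finite dimensional case together with substandardness of the induced inclusions, Proposition \ref{L:substdsemifinite} identifies $\mcm$, as the inductive limit of the corners $r_{n}\mcm r_{n}$, with $\mcf_{s}^{t}$ where $t=\lim t_{n}=\sum_{k}t_{k}$ and $s=\lim\bigl(s_{A}+s_{B}-\fdimC(r_{n}Dr_{n})\bigr)=s_{A}+s_{B}-\fdimC(D)$, and it gives substandardness of $A\to\mcm$ on each finite trace corner, which is exactly what is required when $t=\infty$.

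For finite dimensional $D$ (so $t<\infty$) I would realize $A$ and $B$ in Dykema's standard form: write $A=vN(R_{A}\cup S_{A})$ and $B=vN(R_{B}\cup S_{B})$, where $R_{A},R_{B}$ are hyperfinite II$_{1}$ factors containing $D$, and $S_{A}=\{q_{i}X_{i}q_{i}\}$, $S_{B}=\{q_{j}'Y_{j}q_{j}'\}$ are compressed semicircular systems whose full families $\omega_{A}=\{X_{i}\}$ and $\omega_{B}=\{Y_{j}\}$ are free (over $\bbc$) from $R_{A}$ and $R_{B}$ respectively, with compression weights $\sum_{i}\tau(q_{i})^{2}=s_{A}$ and $\sum_{j}\tau(q_{j}')^{2}=s_{B}$. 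Since $A$ and $B$ are free over $D$, so are $R_{A}$ and $R_{B}$, and Lemma \ref{L:sillyRlemma} gives $N_{0}:=vN(R_{A}\cup R_{B})=R_{A}*_{D}R_{B}\cong\mcf_{-\fdimC(D)}^{t}$, generated by $R_{A}$ together with a semicircular system $\omega_{0}$ that is free from $R_{A}$ and has total weight $-\fdimC(D)$.

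The heart of the argument is the freeness claim: inside $\mcm$ the combined system $\omega_{0}\cup\omega_{A}\cup\omega_{B}$ is a single semicircular system free over $\bbc$ from $R_{A}$. Granting this, $\mcm=vN(R_{A}\cup\omega_{0}\cup\omega_{A}\cup\omega_{B})$ is in standard form, hence an interpolated free group factor whose regulated dimension is the sum of the three weights, namely $-\fdimC(D)+s_{A}+s_{B}$; thus $\mcm\cong\mcf_{s_{A}+s_{B}-\fdimC(D)}^{t}$, consistent with additivity of $\fdimC$. Moreover $A=vN(R_{A}\cup S_{A})$ is generated by $R_{A}$ and the sub-collection $S_{A}$ of these generators, so the inclusion $A\to\mcm$ is standard straight from the definition. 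To prove the freeness claim I would first show that $\omega_{A}\cup\omega_{B}$ is free from $N_{0}=vN(R_{A}\cup\omega_{0})$ and that $\omega_{A}$ and $\omega_{B}$ are mutually free; since $\omega_{0}$ is already free from $R_{A}$, associativity of free products then yields freeness of $\omega_{0}\cup\omega_{A}\cup\omega_{B}$ from $R_{A}$.

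The main obstacle is exactly this freeness computation. The difficulty is that $\omega_{A}$ is free from $R_{A}$ only over $\bbc$ while $A$ and $B$ are free only over $D$; to see that $\omega_{A}$ is free from $vN(R_{A},R_{B})$ over $\bbc$ I would expand alternating words, split each $R_{B}$-factor as $E_{D}(b)+\bigl(b-E_{D}(b)\bigr)$, absorb the $D$-part $E_{D}(b)\in D\subseteq R_{A}$ into the neighbouring $A$-factors, and then invoke $A\perp_{D}B$ together with the fact that elements of $vN(\omega_{A})$ are $D$-centred and $D$-free (since $\omega_{A}\perp_{\bbc}R_{A}\supseteq D$). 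A secondary technical point, which I expect to require genuine care, is that the compression projections $q_{j}'$ of $S_{B}$ lie in $R_{B}$ rather than in the chosen core $R_{A}$; I would handle this using the symmetric form of Lemma \ref{L:sillyRlemma} (which also exhibits $N_{0}$ with core $R_{B}$) together with the compression-invariance of standard embeddings (property 1), adjoining $S_{B}$ relative to the core $R_{B}$ and $S_{A}$ relative to $R_{A}$. Alternatively, one could avoid the abstract freeness entirely by approximating the hyperfinite cores $R_{A},R_{B}$ by multimatrix subalgebras via Lemma \ref{L:infchainlemma} and re-running the simple-step analysis of Theorem \ref{T:maininftheorem}, applying Lemmas \ref{gluelemma}, \ref{tensorlemma1} and \ref{L:M2lemma} at each stage.
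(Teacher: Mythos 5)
Your second half (the finite-dimensional core argument) is essentially the paper's own proof: realize $A$ and $B$ in Dykema standard form with hyperfinite cores containing $D$, invoke Lemma \ref{L:sillyRlemma} to write $vN(R_{A}\cup R_{B})=R_{A}*_{D}R_{B}$ as a core plus a free semicircular family, and then combine the three semicircular families, moving the $B$-side compression projections into the core by unitary conjugation inside $R_{A}*_{D}R_{B}$; the freeness claim you flag as the heart is exactly the point the paper also leans on. But your opening reduction contains a genuine error: for $r_{n}=\sum_{k\le n}p_{k}^{D}\in D$ it is \emph{not} true that $r_{n}\mcm r_{n}=r_{n}Ar_{n}*_{r_{n}Dr_{n}}r_{n}Br_{n}$. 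The algebra $N_{n}:=vN(r_{n}Ar_{n}\cup r_{n}Br_{n})$ is indeed canonically the amalgamated free product of the corners (compression by a projection of $D$ preserves freeness over $D$), but it is a \emph{proper} subalgebra of $r_{n}\mcm r_{n}$: since $D$ is abelian it contains no partial isometries linking $r_{n}$ to $1-r_{n}$, so an element $x=ab$ with $a=r_{n}ap_{k}^{D}\ne 0$ in $A$ and $b=p_{k}^{D}br_{n}\ne 0$ in $B$ for some fixed $k>n$ (such $a,b$ exist because $A$ and $B$ are factors, and $\tau(xx^{*})=\tau(aE_{D}(bb^{*})a^{*})>0$ by freeness and faithfulness) lies in $r_{n}\mcm r_{n}$ yet satisfies $E_{N_{n}}(x)=0$, since every mixed moment of $x$ against a word in $r_{n}Ar_{n}\cup r_{n}Br_{n}$ vanishes by freeness over $D$. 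The same error shows up numerically: if your identity held, the finite-dimensional case would give $\rdim(r_{n}\mcm r_{n})=s_{A}+s_{B}+\sum_{k\le n}t_{k}^{2}$, whereas Lemma \ref{L:rdimP} (applied to the factor $\mcm$) forces $\rdim(r_{n}\mcm r_{n})=\rdim(\mcm)=s_{A}+s_{B}+\sum_{k\in K}t_{k}^{2}$; these disagree whenever $D$ is infinite dimensional, i.e.\ precisely in the case your reduction was meant to handle.

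The step can be repaired by running the limit argument with $N_{n}$ in place of $r_{n}\mcm r_{n}$: the $N_{n}$ are increasing, each one \emph{is} the amalgamated free product of the corners (so the finite-dimensional case applies to it), and $\bigcup_{n}N_{n}$ is dense in $\mcm$ because $r_{n}\to 1$ strongly, so $\bigcup_{n}r_{n}Ar_{n}$ and $\bigcup_{n}r_{n}Br_{n}$ already generate $A$ and $B$. One must then also verify that the inclusions $N_{n}\to N_{n+1}$ are substandard before invoking Proposition \ref{L:substdsemifinite} --- a point you assume rather than prove. Note, however, that the paper needs no such reduction at all: Lemma \ref{L:sillyRlemma} and Corollary \ref{C:sillyRcor} are stated for countable $D$ and for II$_{\infty}$ cores, so the standard-form argument of your second half runs directly for general $D$ (with all compression projections placed under a single minimal projection of $D$), which is shorter and sidesteps the corner issue entirely.
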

\begin{proof}
Let $p$ be a minimal projection in $D$, and choose representations of $A$ and $B$ of the form $A=vN(R\cup\{p_{i}X_{i}p_{i}\}_{i\in I})$ and $B=vN(R'\cup\{q_{j}X_{j}q_{j}\}_{j\in J})$, where $p_{i}\leq p$ and $q_{j}\leq p$ for all $i\in I,j\in J$, and $R$ and $R'$ are either the hyperfinite II$_{1}$ factor or the hyperfinite II$_{\infty}$ factor.  Then $A*_{D}B$ is generated by $R\cup R'\cup\{p_{i}X_{i}p_{i}\}_{i\in I}\cup\{q_{j}X_{j}q_{j}\}_{j\in J}$.  From Lemma \ref{L:sillyRlemma} and Corollary \ref{C:sillyRcor}  we know that $R*_{D}R'=\mcf_{-\fdim(D)}^{t}$, with $R$ embedded correctly and so $R*_{D}R'$ is generated by $R\cup \{p_{k}^{D}X_{k}p_{k}^{D}\}_{k\in K}$.  We can then find $q'_{j}\in R$ which are unitary conjugates of $q_{j}$ using unitaries in $R*_{D}R'$, thus $\mcm$ is generated by $R\cup\{p_{k}^{D}X_{k}p_{k}^{D}\}_{k\in K}\cup\{p_{i}X_{i}p_{i}\}_{i\in I}\cup \{q_{j}X_{j}q_{j}\}_{j\in J}$.  Thus $\mcm$ is the desired factor, and the inclusion of $A$ is substandard.
\end{proof}

\begin{lemma}\label{L:sillyLFHlemma}  Let $H$ be a hyperfinite von Neumann algebra and $A=\mcf_{s}^{t}$, with common abelian type I atomic subalgebra $D$, then $A*_{D}H=\mcf_{s+\fdimC(H)-\fdimC(D)}^{t}$, and the inclusion of $\mcf_{s}^{t}$ into it is substandard.
\end{lemma}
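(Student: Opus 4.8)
The plan is to reduce the product to a situation governed by Lemmas~\ref{L:sillyRlemma} and~\ref{L:sillyLFlemma}, whose proofs I would mirror closely. First I would fix a minimal projection $p\in D$ and choose a standard presentation $A=\mcf_{s}^{t}=vN(R\cup\{p_{i}X_{i}p_{i}\}_{i\in I})$, where $R$ is a hyperfinite II$_{1}$ factor (if $t<\infty$) or II$_{\infty}$ factor (if $t=\infty$) containing $D$, the $X_{i}$ form a semicircular system free from $R$, each $p_{i}\in R$ with $p_{i}\leq p$, and $\sum_{i}\tau(p_{i})^{2}=s$. Since $A$ is not hyperfinite, Theorem~\ref{T:maininftheorem} does not apply to $A*_{D}H$ directly; but $A*_{D}H=vN\bigl((R*_{D}H)\cup\{p_{i}X_{i}p_{i}\}_{i\in I}\bigr)$, where $R*_{D}H=vN(R\cup H)$ is the amalgamated free product of the two genuinely hyperfinite algebras $R$ and $H$ over $D$. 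So everything hinges on identifying $\mcn:=R*_{D}H$ and then re-absorbing the semicircular data of $A$.

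The key step is to show $\mcn\cong\mcf_{\fdimC(H)-\fdimC(D)}^{t}$, generated by $R$ together with a semicircular system $\{r_{\ell}Y_{\ell}r_{\ell}\}_{\ell}$ free from $R$ with $R$ embedded correctly, exactly as in Lemma~\ref{L:sillyRlemma} and Corollary~\ref{C:sillyRcor}. Since $R$ and $H$ are both hyperfinite, Theorem~\ref{T:maininftheorem} already gives $\mcn\in\mcr_{4}$ (or $\mcr_{3}$ when finite) with $\rdim(\mcn)=\rdim(R)+\fdimC(H)-\fdimC(D)=\fdimC(H)-\fdimC(D)$, so the only thing to sharpen is the factor structure and the generating set. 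For this I would use Lemma~\ref{L:infchainlemma} to write $D\subseteq H_{1}\subseteq H_{2}\subseteq\cdots$ with multimatrix $H_{n}$ and dense union, so that $\mcn=\varinjlim(R*_{D}H_{n})$. Each $R*_{D}H_{n}$ is the free product of the diffuse factor $R$ with a multimatrix algebra over $D$; because $D$ has full central support and $R$ already makes all atoms of $D$ equivalent, breaking $H_{n}$ into simple steps (Lemma~\ref{L:decomp2}) and running the argument of Theorem~\ref{maintheorem}, in which every gluing lands inside the factor $R$ (so Lemma~\ref{gluelemma} applies and the corner inclusions are standard), collapses $R*_{D}H_{n}$ to a single $\mcf_{c_{n}}^{t}$ with core $R$ plus a finite semicircular system free from $R$. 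The connecting maps fix $R$ and are substandard by the same simple-step analysis, so Proposition~\ref{L:substdsemifinite} yields $\mcn\cong\mcf_{\fdimC(H)-\fdimC(D)}^{t}$ in the desired form (using $\fdimC(H_{n})\to\fdimC(H)$ exactly as in the convergence argument inside the proof of Theorem~\ref{T:maininftheorem}).

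For the reassembly I would first secure the freeness of $\{X_{i}\}$ from $\mcn$. Passing to the enlarged algebra $\tilde A=vN(R\cup\{X_{i}\})=R*_{D}\tilde W$ with $\tilde W=vN(D\cup\{X_{i}\})$ (full semicirculars, free from $R$ over $D$), associativity of the amalgamated free product gives $\tilde A*_{D}H=\tilde W*_{D}(R*_{D}H)$, so $\{X_{i}\}$ is free from $\mcn$ over $D$; restricting to the subalgebra $A*_{D}H$ this freeness is inherited. Combining with $\{Y_{\ell}\}$ free from $R$ (from the previous step), the union $\{X_{i}\}\cup\{Y_{\ell}\}$ is a single semicircular system free from $R$, with all compressing projections $p_{i},r_{\ell}\in R$. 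Hence
\[
A*_{D}H=vN\!\left(R\cup\{p_{i}X_{i}p_{i}\}_{i\in I}\cup\{r_{\ell}Y_{\ell}r_{\ell}\}_{\ell}\right)
\]
is a standard presentation of a semifinite interpolated free group factor with core $R$ and total free contribution $s+\bigl(\fdimC(H)-\fdimC(D)\bigr)$, giving $A*_{D}H\cong\mcf_{s+\fdimC(H)-\fdimC(D)}^{t}$. Since $A$ is recovered as $R$ together with the sub-semicircular-system $\{p_{i}X_{i}p_{i}\}_{i\in I}$, the inclusion $A\hookrightarrow A*_{D}H$ is substandard directly from the definition, its compression to any finite-trace projection being a standard embedding.

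I expect the main obstacle to be the middle step: pinning down $R*_{D}H$ as a single semifinite interpolated free group factor in the correctly generated form, rather than merely as an element of $\mcr_{4}$ of the right regulated dimension. The delicate point is to check at each finite stage that freely adjoining a multimatrix algebra to the factor $R$ produces no spurious hyperfinite or atomic summand and keeps $R$ as the hyperfinite core with the new generators semicircular and free from $R$; this simultaneously delivers factoriality and the $\mcf^{t}$ type, and is handled by transcribing the arguments of Lemma~\ref{L:sillyRlemma}, Corollary~\ref{C:sillyRcor}, and the simple-step/Lemma~\ref{L:M2lemma} analysis of Theorem~\ref{T:maininftheorem}. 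By comparison, the associativity and freeness bookkeeping of the final paragraph are routine.
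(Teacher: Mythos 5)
Your overall strategy is genuinely different from the paper's: the paper never decomposes $A$ into a hyperfinite core plus semicirculars, but instead approximates $H$ by multimatrix algebras $H_{i}\supseteq D$ (Lemma \ref{L:infchainlemma}), compresses by finite projections $q_{i}\in D$ so that $q_{i}Aq_{i}$ is an interpolated free group factor, invokes Dykema's Lemma 5.9 of \cite{kenAmJM} to identify $q_{i}H_{i}q_{i}*_{q_{i}D}q_{i}Aq_{i}$ as an interpolated free group factor with standard inclusion of $q_{i}Aq_{i}$, passes back to $H_{i}*_{D}A$ with Lemma \ref{L:removesummand}, and finally shows the connecting maps $H_{i}*_{D}A\to H_{i+1}*_{D}A$ are substandard (Lemmas \ref{tensorlemma1} and \ref{gluelemma}) so that the inductive limit is the desired factor. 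Your reduction --- writing $A=vN(R\cup\{p_{i}X_{i}p_{i}\}_{i\in I})$ with $D\subseteq R$, identifying $\mcn=R*_{D}H$ first, and reattaching $\{X_{i}\}$ by associativity --- is a coherent alternative scheme, and the freeness bookkeeping in your last paragraph (that freeness of $\{X_{i}\}$ from $R$ together with $D\subseteq R$ gives freeness of $vN(D\cup\{X_{i}\})$ from $\mcn$ over $D$, and the reverse) does rest on standard facts.

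However, there is a genuine gap at the step you yourself flag as the main obstacle, and it is not closed by the tools you cite. You need $\mcn=R*_{D}H$ (equivalently each $R*_{D}H_{n}$) to be generated by the \emph{original} copy of $R$ together with a semicircular family free from that copy: the core of the presentation must be the prescribed $R$, because the projections $p_{i}$ compressing $A$'s semicirculars lie in $R$ and must lie in the core for your combined family $vN\left(R\cup\{p_{i}X_{i}p_{i}\}_{i}\cup\{r_{\ell}Y_{\ell}r_{\ell}\}_{\ell}\right)$ to be a standard presentation at all. But ``running the argument of Theorem \ref{maintheorem}'' via simple steps, Lemmas \ref{gluelemma}, \ref{tensorlemma1}, \ref{L:M2lemma} and Proposition \ref{L:substdsemifinite} only yields isomorphism types and (sub)standardness of embeddings; standardness asserts the existence of \emph{some} hyperfinite core and semicircular system with the stated properties, and gives no control over whether a prescribed hyperfinite subfactor of the algebra can serve as that core with the new generators free from it. The in-paper results that do track a prescribed core --- Lemma \ref{L:sillyRlemma} and Corollary \ref{C:sillyRcor} --- are proved by induction over the atoms of $D$ using Theorem 2.1 of \cite{kenLMS}, an argument that requires both free factors to be hyperfinite \emph{factors} and does not transcribe to the atomic multimatrix algebras $H_{n}$. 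So your central claim is of essentially the same depth as the lemma being proved; making it rigorous requires exactly the kind of generator-tracking input (Dykema's Lemma 5.9 of \cite{kenAmJM}, or an analogue of the Theorem 2.1 machinery of \cite{kenLMS} for non-factor algebras) that the paper's own proof reaches for at precisely this point.
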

\begin{proof}  Using Lemma \ref{L:infchainlemma} we can find a chain of multimatrix algebras $H_{i}$ approximating $H$ so that $D\subseteq H_{i}$ for all $i$, and that so that for each $i$ there exists a finite projection $q_{i}\in D$ so that $(1-q_{i})H_{i}(1-q_{i})=(1-q_{i})D$.  Applying Lemma \ref{L:decomp2} we can assume that this is generated by a sequence of simple steps.  Then by Lemma 5.9 of \cite{kenAmJM}, since $q_{i}Aq_{i}$ is an interpolated free group factor, $q_{i}H_{i}q_{i}*_{q_{i}D}q_{i}Aq_{i}$ is an interpolated free group factor (with regulated dimension as expected), and the inclusion of $q_{i}Aq_{i}$ is standard.  Then apply Lemma \ref{L:removesummand} to see this implies $H_{i}*_{D}A$ is $\mcf_{s+\fdimC(H_{i})-\fdimC(D)}^{t}$, and the inclusion of $A$ into it is substandard.

Using Lemma \ref{tensorlemma1}, and Lemma \ref{gluelemma}  we see that the embedding of $H_{i}*_{D}A\to H_{i+1}*_{D}A$ is substandard, which completes the proof.

\end{proof}
\begin{lemma}\label{L:sillyLFRlemma}  Let $A\in \mathcal{R}_{4}$ and $B=\mcf_{s}^{t}$.  Let $D$ be an abelian type I atomic subalgebra of both $A$ and $B$.  Then $\mcm=A*_{D}B$ is a semifinite interpolated free group factor, and if $\rdim(A)$ is defined and greater than $-\infty$, then $\mcm=\mcf_{s+\fdimC(A)-\fdimC(D)}^{t}$. Furthermore the inclusion $B\to \mcm$ is substandard and that of $F_{i}\to\mcm$ is substandard for any  summand, $F_{i}\cong\mcf_{s'}^{t'}$, of $A$.
\end{lemma}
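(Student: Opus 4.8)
The plan is to absorb the summands of $A$ into $B$ one at a time, using Lemma \ref{L:removesummand} to localize each absorption to a corner where the two preceding lemmas apply, and then to pass to an inductive limit via Proposition \ref{L:substdsemifinite}. First I would write
\[
A=\overset{z_{0}}{H}\oplus\bigoplus_{i\geq 1}\overset{z_{i}}{F_{i}},
\]
where $H$ is the single semifinite hyperfinite algebra obtained as the direct sum of all hyperfinite summands, the $F_{i}\cong\mcf_{s_{i}}^{t_{i}}$ are the remaining factor summands (the interpolated free group factors together with the $F\otimes B(\mch)$ factors, all of the form $\mcf^{t}$), and $z_{i}$ is the central support of the $i$-th piece. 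Relabel the pieces $A_{0}=H,A_{1},A_{2},\dots$ with supports $z_{0},z_{1},\dots$, set $q_{k}=z_{0}+\cdots+z_{k}$, and define
\[
\mcm_{k}=\bigl(q_{k}Aq_{k}\oplus(1-q_{k})D\bigr)*_{D}B,
\]
realized inside $\mcm=A*_{D}B$ as $vN\bigl((q_{k}Aq_{k}\oplus(1-q_{k})D)\cup B\bigr)$. Since $q_{k}Aq_{k}\oplus(1-q_{k})D$ increases to $A$, the $\mcm_{k}$ increase to $\mcm$, with the empty product ($q_{-1}=0$) equal to $D*_{D}B=B$ as the base case.

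For the inductive step I would apply Lemma \ref{L:removesummand} to $\mcm_{k}$ with the central projection $p=z_{k}$ (which is central in $q_{k}Aq_{k}\oplus(1-q_{k})D$, so that $z_{k}\in D$ is not needed): its ``$\underline{A}$'' is precisely $q_{k}Aq_{k}\oplus(1-q_{k})D$ with the summand $A_{k}$ replaced by $z_{k}D$, that is $q_{k-1}Aq_{k-1}\oplus(1-q_{k-1})D$, so its ``$\underline{\mcm}$'' is $\mcm_{k-1}$. The lemma then gives that $z_{k}$ has full central support and
\[
z_{k}\mcm_{k}z_{k}=(z_{k}\mcm_{k-1}z_{k})*_{z_{k}D}A_{k}.
\]
By induction $z_{k}\mcm_{k-1}z_{k}$ is a compression of a semifinite interpolated free group factor, hence again of the form $\mcf^{t}$ with $t=\tau(z_{k})$, and it contains the abelian atomic algebra $z_{k}D$; since $A_{k}$ also has identity $z_{k}$, the two factors share the parameter $t=\tau(z_{k})$. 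If $A_{k}=H$ I apply Lemma \ref{L:sillyLFHlemma}, and if $A_{k}=F_{i}$ I apply Lemma \ref{L:sillyLFlemma}: in either case $z_{k}\mcm_{k}z_{k}$ is a semifinite interpolated free group factor, the inclusion $z_{k}\mcm_{k-1}z_{k}\hookrightarrow z_{k}\mcm_{k}z_{k}$ is substandard, and (when $A_{k}=F_{i}$) the inclusion $A_{k}\hookrightarrow z_{k}\mcm_{k}z_{k}$ is substandard as well. Because $z_{k}$ has full central support and substandardness is detected on a finite corner, it follows that $\mcm_{k}$ is a semifinite interpolated free group factor and that $\mcm_{k-1}\hookrightarrow\mcm_{k}$ is substandard.

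Finally, Proposition \ref{L:substdsemifinite} yields $\mcm=\lim_{k}\mcm_{k}=\mcf_{s}^{t}$ with $t=\tau(I)$ constant and $s=\lim_{k}\rdim(\mcm_{k})$, and that each $\mcm_{k}\hookrightarrow\mcm$ is substandard; in particular the base case gives that $B\hookrightarrow\mcm$ is substandard. Compressing this substandard chain by $z_{i}$ and composing with the substandard inclusion $F_{i}\hookrightarrow z_{i}\mcm_{(\cdot)}z_{i}$ shows that $F_{i}\hookrightarrow z_{i}\mcm z_{i}$, hence $F_{i}\hookrightarrow\mcm$, is substandard. For the value of the regulated dimension, additivity of $\rdim$ under these free products (from the two lemmas) and over direct sums gives $\rdim(\mcm_{k})=\rdim(B)+\sum_{n\leq k}\bigl(\rdim(A_{n})-\rdim(z_{n}D)\bigr)$; using $\rdim(\mcf_{s}^{t})=s$, $\fdimC(A)=\sum_{n}\rdim(A_{n})$ and $\fdimC(D)=\sum_{n}\rdim(z_{n}D)$, letting $k\to\infty$ yields $s=s+\fdimC(A)-\fdimC(D)$, i.e. $\mcm=\mcf_{s+\fdimC(A)-\fdimC(D)}^{t}$, provided $\fdimC(A)$ is defined and greater than $-\infty$ so that the series is unambiguous (exactly as in the proof of Theorem \ref{T:maininftheorem}).

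The main obstacle I expect is the bookkeeping at infinity: checking that the increasing partial products genuinely exhaust $\mcm$, that the regulated-dimension series is well defined and telescopes correctly against $\fdimC(D)$ (which may be $-\infty$), and above all verifying the hypotheses of Lemma \ref{L:removesummand} at each step so that the absorption of $A_{k}$ reduces cleanly to Lemma \ref{L:sillyLFlemma} or \ref{L:sillyLFHlemma} on the corner $z_{k}\mcm_{k-1}z_{k}$ — including the matching of the trace parameter $t=\tau(z_{k})$ in both factors in the case $\tau(z_{k})=\infty$ coming from an $F\otimes B(\mch)$ summand.
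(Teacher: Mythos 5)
Your proposal is correct and follows essentially the same route as the paper: the same exhaustion of $A$ by $A_{k}=q_{k}Aq_{k}\oplus(1-q_{k})D$, the same use of Lemma \ref{L:removesummand} to localize each absorption to the corner $z_{k}\mcm_{k}z_{k}=z_{k}\mcm_{k-1}z_{k}*_{z_{k}D}A_{k}$, the same invocation of Lemmas \ref{L:sillyLFHlemma} and \ref{L:sillyLFlemma} on that corner, and the same passage to the inductive limit via Proposition \ref{L:substdsemifinite}. The only differences are cosmetic (you absorb $H$ as a separate first step with base case $B=D*_{D}B$, while the paper bundles $H$ into $A_{0}$ and starts from Lemma \ref{L:sillyLFHlemma}), and your flagged concerns about trace matching and factoriality are handled exactly as in the paper.
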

\begin{proof}  Let $A=\overset{p_{0}}{H}\oplus\bigoplus\limits_{i=1}^{K}\overset{p_{i}}{\mcf_{s_{i}}^{t_{i}}}$ for $K\in \bbn\cup\{\infty\}$ and where $H$ is a hyperfinite algebra.  Let $A_{0}=H\oplus\bigoplus\limits_{i=1}^{K}p_{i}D$ and let $A_{k}=H\oplus\bigoplus\limits_{i=1}^{k}\mcf_{s_{i}}^{t_{i}}\oplus\bigoplus\limits_{i=k+1}^{K}p_{i}D$.  Let $\mcm_{k}=A_{k}*_{D}B$.

Our goal is to show that $\mcm_{k}=\mcf_{\fdimC(A_{k})+s-\fdimC(D)}^{t}$ for all $k$, and that the inclusion of $\mcm_{k}\to \mcm_{k+1}$ is substandard.

First, Lemma \ref{L:sillyLFHlemma} shows that $\mcm_{0}=\mcf_{\fdimC(A_{0})+s-\fdimC(D)}^{t}$, and the inclusion of $B\to \mcm_{0}$ is standard.


Lemma \ref{L:removesummand} tells us that $p_{k}\mcm_{k}p_{k}=p_{k}\mcm_{k-1}p_{k}*_{p_{k}D}\mcf_{s_{k}}^{t_{k}}$ and that $\mcm_{k}$ is a factor. Thus the embedding of $\mcf_{s_{k}}^{t_{k}}\to p_{k}\mcm_{k}p_{k}$ is substandard. By Lemma \ref{L:sillyLFlemma}, $p_{k}\mcm_{k}p_{k}=\mcf_{s'}^{t_{k}}$ for some $s'$ and the inclusion of $p_{k}\mcm_{k-1}p_{k}$ into it is substandard, and thus $\mcm_{k}=\mcf_{\fdimC(A_{k})+s-\fdimC(D)}^{t}$ and the inclusion $\mcm_{k-1}\to\mcm_{k}$ is substandard.

Thus the inductive limit of the $\mcm_{k}$, $A*_{D}B=\mcf_{\fdimC(A)+s-\fdimC(D)}^{t}$, the inclusion of $B$ into it is standard, and the inclusions of $\mcf_{s_{i}}^{t_{i}}\to \mcm$ are substandard.
\end{proof}

\begin{theorem}  For $A,B\in \mathcal{R}_{4}$ (or $\mcr_{3}$) and $D$ an atomic type I common subalgebra of $A$ and $B$, $A*_{D}B\in \mathcal{R}_{4}$ (or $\mcr_{3}$), and $\fdimC(A*_{D}B)=\fdimC(A)+\fdimC(B)-\fdimC(D)$ if this equation is well defined.  
\end{theorem}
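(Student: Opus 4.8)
The plan is to reduce to the case of abelian $D$, use the already-established result for two hyperfinite algebras as a base case, and then restore the interpolated free group factor summands of $A$ and $B$ one at a time, realizing $A*_DB$ as an inductive limit of substandard embeddings. First I would invoke Lemma \ref{L:abelianD}: since both $\mcr_3$ and $\mcr_4$ are closed under compression by a projection (a corner of a semifinite hyperfinite algebra is again semifinite hyperfinite, and a corner of $\mcf_s^t$ or of $F\otimes B(\mch)$ is again a semifinite interpolated free group factor), it suffices to treat the case where $D=\bigoplus_k\overset{p_k^D}{\underset{t_k^D}{\bbc}}$ is an abelian multimatrix algebra. Write $A=H_A\oplus\bigoplus_{i\in I}\mcf_{s_i}^{t_i}$ and $B=H_B\oplus\bigoplus_{j\in J}\mcf_{s'_j}^{t'_j}$ with $H_A,H_B$ diffuse hyperfinite and $I,J$ countable. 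Enumerating all of these free group factor summands together in a single sequence, let $A^{(m)}$ and $B^{(m)}$ be the algebras obtained from $A$ and $B$ by keeping the first $m$ restored and replacing each not-yet-restored summand $\mcf_s^t$ (of central support $p$) by the hyperfinite corner $pD$. Then $A^{(0)},B^{(0)}$ are hyperfinite, $(A^{(m)},B^{(m)})$ increases to $(A,B)$, and $A^{(m-1)}\subseteq A^{(m)}$, $B^{(m-1)}\subseteq B^{(m)}$, giving natural trace-preserving inclusions of the products $\mcm_m:=A^{(m)}*_DB^{(m)}$.

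For the base case, $\mcm_0=A^{(0)}*_DB^{(0)}$ is the amalgamated free product of two hyperfinite algebras over the abelian atomic subalgebra $D$, so Theorem \ref{T:maininftheorem} places it in $\mcr_4$ (in $\mcr_3$ if $A,B$ are finite) and gives $\fdimC(\mcm_0)=\fdimC(A^{(0)})+\fdimC(B^{(0)})-\fdimC(D)$. For the inductive step, passing from $\mcm_{m-1}$ to $\mcm_m$ restores a single summand $\mcf_\sigma^\theta$ of central support $p$ on one of the two sides. Applying Lemma \ref{L:removesummand} (with the roles of $A$ and $B$ interchanged when the summand lies on the $B$-side), the central support of $p$ is unchanged and
\[
p\mcm_m p=p\mcm_{m-1}p*_{pD}\mcf_\sigma^\theta .
\]
Since $\mcm_{m-1}\in\mcr_4$ and the class is closed under compression, $p\mcm_{m-1}p\in\mcr_4$, so Lemma \ref{L:sillyLFRlemma} identifies $p\mcm_m p$ as a single semifinite interpolated free group factor and shows the inclusion of $p\mcm_{m-1}p$, and hence of each of its free group factor summands, into it is substandard. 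Away from the central support of $p$ the free product is untouched, so $\mcm_m\in\mcr_4$, the inclusion $\mcm_{m-1}\hookrightarrow\mcm_m$ is substandard in the extended sense, and by invariance of $\rdim$ under compression (Lemma \ref{L:rdimP}) together with additivity over direct sums, $\fdimC(\mcm_m)=\fdimC(A^{(m)})+\fdimC(B^{(m)})-\fdimC(D)$ whenever these quantities are finite.

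Finally I would apply Proposition \ref{L:substdsemifinite}, in the form extended to $\mcr_4$ by the last definition of substandard, to the chain $\mcm_0\to\mcm_1\to\cdots$: the inductive limit is $A*_DB$ and is a countable direct sum of semifinite interpolated free group factors together with a single diffuse hyperfinite summand, hence lies in $\mcr_4$; if $A$ and $B$ are finite every $\mcm_m$ is finite and the limit lies in $\mcr_3$. The dimension formula follows by passing to the limit in the per-step identity, with the $-\infty$ and $\infty-\infty$ possibilities handled exactly as in the proof of Theorem \ref{T:maininftheorem}, which is why equality is claimed only when the right-hand side is well defined. I expect the main obstacle to be the bookkeeping for this two-sided, possibly doubly-infinite restoration: one must verify that at each step the algebra is genuinely unchanged off the central support of the restored summand, so that no summands merge in the limit and the count of free group factor summands is controlled, and that substandardness holds in the extended sense at every stage so that Proposition \ref{L:substdsemifinite} governs the whole chain.
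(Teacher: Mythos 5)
Your proposal is correct and follows essentially the same route as the paper's own proof: reduce to abelian $D$ via Lemma \ref{L:abelianD}, take the purely hyperfinite product as the base case via Theorem \ref{T:maininftheorem}, restore the semifinite interpolated free group factor summands one at a time using Lemma \ref{L:removesummand} together with Lemma \ref{L:sillyLFRlemma} to get substandard inclusions, and pass to the inductive limit with Proposition \ref{L:substdsemifinite}, tracking $\rdim$ via Lemma \ref{L:rdimP}. The only cosmetic difference is that you merge the $A$-side and $B$-side summands into a single enumeration, where the paper keeps a double index $\mcm(i,j)$; the argument is the same.
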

\begin{proof}  As usual we use Lemma \ref{L:abelianD} to assume without loss of generality that $D$ is abelian.  Let $A=H_{A}\oplus\bigoplus\limits_{i=1}^{K_{A}}\overset{p_{i}}{\mcf_{s_{i}}^{t_{i}}}$ where $K_{A}\in\bbn\cup\{\infty\}$ similarly let $B=H_{B}\oplus\bigoplus\limits_{j=1}^{K_{B}}\overset{q_{j}}{\mcf_{s_{j}'}^{t_{j}'}}$.  Then define $A_{k}=H_{A}\oplus\bigoplus\limits_{i=1}^{k}\mcf_{s_{i}}^{t_{j}}\oplus\bigoplus\limits_{i=k+1}^{K_{A}}p_{i}D$ and define $B_{k}$ similarly.

Let $\mcm(i,j)=A_{i}*_{D}B_{j}$, then the inductive limit of the $\mcm(i,j)$ is $A*_{D}B$.  We claim that each $\mcm(i,j)$ is in $\mathcal{R}_{4}$, and that the embedding of $\mcm(i,j)$ into $\mcm(i+1,j)$ is substandard.  We proceed by induction on $i,j$.

Theorem \ref{T:maininftheorem} tells us that $\mcm(0,0)$ is in $\mathcal{R}_{4}$, and that $\fdimC(\mcm(0,0))=\fdimC(A_{0})+\fdimC(B_{0})-\fdimC(D)$.  By Lemma \ref{L:removesummand},  $p_{i}\mcm(i,j)p_{i}=vN(p_{i}\mcm(i-1,j)p_{i}\cup \mcf_{s_{i}}^{t_{i}})=p_{i}\mcm(i-1,j)p_{i}*_{p_{i}D}\mcf_{s_{i}}^{t_{i}}$.  Lemma \ref{L:sillyLFRlemma} tells us that this is of the form $\mcf_{s}^{t}$ for some $s$ and $t$ and that the inclusion of any $\mcf_{s'}^{t}$ factor summand in $p_{i}\mcm(i-1,j)p_{i}\to p_{i}\mcm(i,j)p_{i}$ is substandard.  This also tells us that the inclusion of $\mcf_{s_{i}}^{t_{i}}\to p_{i}\mcm(i,j)p_{i}$ is substandard.


Since the central support of $p_{i}$ in $\mcm(i-1,j)$ and $\mcm(i,j)$ is the same (again by Lemma \ref{L:removesummand}), any $F_{s}^{t}$ factor summand of $\mcm(i-1,j)$ which is orthogonal to $p_{i}$ is identical in $\mcm(i,j)$.  Thus the inclusion of $\mcm(i-1,j)\to \mcm(i,j)$ is substandard, as is the inclusion of $\mcf_{s_{i}}^{t_{i}}\to\mcm(i-1,j)$.

Next note that $\fdimC(q\mcm(i,j)q)=\fdimC(p_{i}\mcm(i,j)p_{i})$ by Lemma \ref{L:rdimP}.   Note 
\[
\fdimC(p_{i}\mcm(i,j)p_{i})=\fdimC(p_{i}\mcm(i-1,j)p_{i})+\fdim(\mcf_{s_{i}}^{t_{i}})-\fdimC(p_{i}D).
\]
Since $q$ is the central support of $p_{i}$ in both $\mcm(i,j)$ and $\mcm(i-1,j)$ we know 
\[
\fdimC (p_{i}\mcm(i-1,j)p_{i})=\fdimC(q\mcm(i-1,j)q).
\]  
Thus we see
\[
\fdimC(q\mcm(i,j)q)=\fdimC(q\mcm(i-1,j)q)+\fdimC(\mcf_{s_{i}}^{t_{i}})-\fdimC(p_{i}D).
\]
And since $(1-q)\mcm(i,j)(1-q)=(1-q)\mcm(i-1,j)(1-q)$, we see
\[
\fdimC(\mcm(i,j))=\fdimC(\mcm(i-1,j))+\fdimC(\mcf_{s_{i}}^{t_{i}})-\fdimC(p_{i}D).
\]
Note $\fdim(p_{i}A_{i}p_{i})=\fdimC(\mcf_{s_{i}}^{t_{i}})$,
and $\fdimC(p_{i}A_{i-1}p_{i})=\fdimC(p_{i}D)$. Thus 
\[\fdimC(A_{i})-\fdimC(A_{i-1})=\fdimC(\mcf_{s_{i}}^{t_{i}})-\fdimC(p_{i}D).
\]
Combining these we get:
\begin{multline*}
\fdimC(\mcm(i,j))\\=\fdimC(A_{i-1})+\fdimC(B_{j})-\fdimC(D)+\fdimC(\mcf_{s_{i}}^{t_{i}})-\fdimC(p_{i}D)
\end{multline*}
\[
=\fdimC(A_{i})+\fdimC(B_{j})-\fdimC(D).
\]

Thus our claim is proved, and the result follows.  Note that if $A$ and $B$ are finite, their amalgamated free product is, and thus if they were both in $\mcr_{3}$ then heir amalgamated free product is in $\mcr_{3}$.
\end{proof}

\begin{eg}  Let $A=\bigoplus_{i=1}^{\infty}\mcf^{2^{-i}}_{2^{i}}$ where $\tau(p_{i})=\frac{1}{2^{i}}$ and let $B=\mcf^{1}_{2}$, and choose any multimatrix subalgebra $D$ of these.  Checking, we see $\rdim(A)=\sum_{i=1}^{\infty}1=\infty$.  By Lemma \ref{L:sillyLFRlemma} we know $A*_{D}B$ is an interpolated free group factor, and applying the free dimension formula we see that it must be $\mcf_{\infty}^{1}$. 
\end{eg}
 This shows that unlike in the case of $\mcr_{1}$ and $\mcr_{2}$, restricting the interpolated free group factor summands in $A$ and $B$ to having only finite free dimension does not guarantee this for the product.  We could restrict $\mcr_{3}$ to those with finite regulated dimension (or equivalently free dimension) and it would be still closed under these amalgamated free products.  Similarly we could restrict $\mcr_{4}$ to those with finite defined regulated dimension, which would then be closed over amalgamated free products of type I atomic algebras with finite regulated dimension.

\bibliography{newbib}

\begin{thebibliography}{10}

\bibitem{KenNate2004}
Nathanial~P. Brown and Kenneth~J. Dykema.
\newblock Popa algebras in free group factors.
\newblock {\em J. Reine Angew. Math.}, 573:157--180, 2004.

\bibitem{kenDuke}
Ken Dykema.
\newblock Free products of hyperfinite von {N}eumann algebras and free
  dimension.
\newblock {\em Duke Math. J.}, 69(1):97--119, 1993.

\bibitem{kenIFGF}
Ken Dykema.
\newblock Interpolated free group factors.
\newblock {\em Pacific J. Math.}, 163(1):123--135, 1994.

\bibitem{kenAmJM}
Ken Dykema.
\newblock Amalgamated free products of multi-matrix algebras and a construction
  of subfactors of a free group factor.
\newblock {\em Amer. J. Math.}, 117(6):1555--1602, 1995.

\bibitem{kenLMS}
Ken Dykema.
\newblock A description of amalgamated free products of finite von {N}eumann
  algebras over finite-dimensional subalgebras.
\newblock {\em Bull. Lond. Math. Soc.}, 43(1):63--74, 2011.

\bibitem{minearxiv}
Ken Dykema and Daniel Redelmeier.
\newblock The amalgamated free product of hyperfinite von neumann algebras over
  finite dimensional subalgebras, 2011, arXiv:1110.5597.

\bibitem{GJS}
A.~Guionnet, V.~Jones, and D.~Shlyakhtenko.
\newblock A semi-finite algebra associated to a subfactor planar algebra.
\newblock {\em J. Funct. Anal.}, 261(5):1345--1360, 2011.

\bibitem{kodsund}
Vijay Kodiyalam and V.~S. Sunder.
\newblock On the {G}uionnet-{J}ones-{S}hlyakhtenko construction for graphs.
\newblock {\em J. Funct. Anal.}, 260(9):2635--2673, 2011.

\bibitem{popaAFP}
Sorin Popa.
\newblock Markov traces on universal {J}ones algebras and subfactors of finite
  index.
\newblock {\em Invent. Math.}, 111(2):375--405, 1993.

\bibitem{ueda}
Yoshimichi Ueda.
\newblock Amalgamated free product over {C}artan subalgebra.
\newblock {\em Pacific J. Math.}, 191(2):359--392, 1999.

\bibitem{voicAFP}
Dan Voiculescu.
\newblock Symmetries of some reduced free product {$C\sp \ast$}-algebras.
\newblock In {\em Operator algebras and their connections with topology and
  ergodic theory ({B}u\c steni, 1983)}, volume 1132 of {\em Lecture Notes in
  Math.}, pages 556--588. Springer, Berlin, 1985.

\bibitem{VoicOrig}
Dan Voiculescu.
\newblock Operations on certain non-commutative operator-valued random
  variables.
\newblock {\em Ast\'erisque}, (232):243--275, 1995.
\newblock Recent advances in operator algebras (Orl{\'e}ans, 1992).

\end{thebibliography}

\end{document}